\DeclareSymbolFont{cyrletters}{OT2}{wncyr}{m}{n}
\DeclareMathSymbol{\Lob}{\mathalpha}{cyrletters}{76}
\numberwithin{equation}{section}
\newcommand{\R}{\mathbb{R}}
\newcommand{\CH}{\mathcal{H}}
\newcommand{\CC}{\mathcal{C}}
\newcommand{\E}{\mathbb{E}}
\newcommand{\C}{\mathbb{C}}
\newcommand{\Z}{\mathbb{Z}}
\newcommand{\X}{\mathbb{X}}
\newcommand{\V}{\mathbb{V}}
\newcommand{\FX}{\mathfrak{X}}
\newcommand{\tFX}{\widetilde{\mathfrak{X}}}
\newcommand{\hFX}{\widehat{\mathfrak{X}}}
\newcommand{\hpi}{\hat{\pi}}
\newcommand{\CI}{\mathcal{I}}
\newcommand{\CF}{\mathcal{F}}
\newcommand{\hCF}{\widehat{\mathcal{F}}}
\newcommand{\CO}{\mathcal{O}}
\newcommand{\bS}{\mathbb{S}}
\newcommand{\A}{\mathcal{A}}
\newcommand{\CV}{\mathcal{V}}
\newcommand{\tCV}{\widetilde{\mathcal{V}}}
\newcommand{\tU}{\widetilde{U}}
\newcommand{\tV}{\widetilde{V}}
\newcommand{\tW}{\widetilde{W}}
\newcommand{\tR}{\widetilde{R}}
\newcommand{\tD}{\widetilde{D}}
\newcommand{\talpha}{\widetilde{\alpha}}
\newcommand{\hp}{\hat{p}}
\newcommand{\CG}{\mathcal{G}}
\newcommand{\uC}{\underline{C}}
\newcommand{\I}{\mathcal{I}}
\newcommand{\bx}{\mathbf{x}}
\newcommand{\bv}{\mathbf{v}}
\newcommand{\by}{\mathbf{y}}
\newcommand{\bo}{\mathbf{o}}
\newcommand{\bm}{\mathbf{m}}
\newcommand{\bn}{\mathbf{n}}
\newcommand{\bell}{\boldsymbol{\ell}}
\newcommand{\tz}{\tilde{z}}
\newcommand{\hz}{\hat{z}}
\newcommand{\tC}{\widetilde{C}}
\newcommand{\hC}{\widehat{C}}
\newcommand{\pr}{\mathop{\mathrm{pr}}\nolimits}
\newcommand{\tpr}{\mathop{\widetilde{\mathrm{pr}}}\nolimits}
\newcommand{\hpr}{\mathop{\widehat{\mathrm{pr}}}\nolimits}
\newcommand{\arcosh}{\mathop{\mathrm{arcosh}}\nolimits}
\newcommand{\Arcosh}{\mathop{\mathrm{Arcosh}}\nolimits}
\newcommand{\Arcsin}{\mathop{\mathrm{Arcsin}}\nolimits}
\newcommand{\Arccos}{\mathop{\mathrm{Arccos}}\nolimits}
\newcommand{\Log}{\mathop{\mathrm{Log}}\nolimits}
\newcommand{\Hom}{\mathop{\mathrm{Hom}}\nolimits}
\renewcommand{\Im}{\mathop{\mathrm{Im}}\nolimits}
\renewcommand{\Re}{\mathop{\mathrm{Re}}\nolimits}
\newcommand{\lk}{\mathop{\mathrm{lk}}\nolimits}
\newcommand{\Var}{\mathop{\mathrm{Var}}\nolimits}
\newtheorem{theorem}{Theorem}[section]
\newtheorem{propos}[theorem]{Proposition}
\newtheorem{cor}[theorem]{Corollary}
\newtheorem{lem}[theorem]{Lemma}
\theoremstyle{definition}
\newtheorem{example}[theorem]{Example}
\newtheorem{defin}[theorem]{Definition}
\newtheorem{remark}[theorem]{Remark}
\author{Alexander A. Gaifullin}
\thanks{This work is supported by the Russian Science Foundation under grant 14-50-00005}
\title[The Bellows conjecture in Lobachevsky spaces]{The analytic continuation of volume and the Bellows conjecture in Lobachevsky spaces}
\date{}
\address{Steklov Mathematical Institute of Russian Academy of Sciences}
\email{agaif@mi.ras.ru}
\keywords{Flexible polyhedron, volume, the Bellows conjecture, Schl\"afli's formula, analytic continuation}
\begin{document}

\begin{abstract}
A \textit{flexible polyhedron} in an $n$-dimensional space~$\X^n$ of constant curvature is a polyhedron with rigid $(n-1)$-dimensional faces and hinges at $(n-2)$-dimensional faces. The \textit{Bellows conjecture} claims that, for $n\ge 3$, the volume of any flexible polyhedron is constant during the flexion. The Bellows conjecture in Euclidean spaces~$\E^n$ was proved by Sabitov for $n=3$ (1996) and by the author for $n\ge 4$ (2012). Counterexamples to the Bellows conjecture in open hemispheres~$\bS^n_+$ were constructed by Alexandrov for $n=3$ (1997) and by the author for $n\ge 4$ (2015). In this paper we prove the Bellows conjecture for bounded flexible polyhedra in odd-dimensional Lobachevsky spaces. The proof is based on the study of the analytic continuation of the volume of a simplex in the Lobachevsky space considered as a function of the hyperbolic cosines of its edge lengths.
\end{abstract}

\maketitle

\section{Introduction}

Let $\X^n$ be one of the three $n$-dimensional spaces of constant curvature, namely, the Euclidean space~$\E^n$, or the Lobachevsky space~$\Lambda^n$, or the sphere~$\bS^n$. We shall always normalize the metrics on the sphere~$\bS^n$ and on the Lobachevsky space~$\Lambda^n$ so that their (sectional) curvatures are equal to~$1$ and~$-1$, respectively. We consider an oriented connected closed  $(n-1)$-dimensional polyhedral surface~$P$ in~$\X^n$ with rigid $(n-1)$-faces and with hinges at $(n-2)$-faces. We consider continuous deformations~$P_t$ of~$P$ such that all~$P_t$ have the same combinatorial type, and every $(n-1)$-face of~$P_t$ remains congruent to itself during the deformation, while the dihedral angles at $(n-2)$-faces of~$P_t$ are allowed to vary continuously. Such deformations~$P_t$ are called \textit{flexions\/} of~$P$. A flexion~$P_t$ is called \textit{non-trivial} if it is not induced by an ambient rotation of~$\X^n$. A polyhedral surface~$P$ is called a \textit{flexible polyhedron\/} if it admits a non-trivial flexion. Notice that the surface~$P$ is not required to be embedded, though embedded flexible polyhedra are  of  a special interest. The two-dimensional case is trivial, since all generic polygons in~$\E^2$, $\Lambda^2$, and~$\bS^2$ with at least four sides are flexible, and all triangles are rigid. So further we assume that $n\ge 3$. 

The study of flexible polyhedra started with Bricard's classification of flexible octahedra in~$\E^3$, see~\cite{Bri97}. According to this classification, there are three families of flexible octahedra in~$\E^3$, and none of them contains an embedded octahedron. The first example of an embedded flexible polyhedron in~$\E^3$ was constructed by Connelly~\cite{Con77}, see also~\cite{Kui78}. 
The simplest of the presently known  embedded flexible polyhedra was constructed by Steffen in 1978. It has $9$ vertices. This polyhedron and its unfolding are shown in Fig.~\ref{fig_steffen}. A more detailed description of it can be found in~\cite{Con78a}. Kuiper~\cite[Sect.~2.7]{Kui78} noticed that the analogues of Bricard's flexible octahedra and of Connelly's flexible polyhedron exist both in~$\Lambda^3$ and in~$\bS^3$, see also~\cite{Sta06}. For a long time the problem on existence of flexible polyhedra in dimensions~$4$ and higher remained open. Examples of flexible self-intersecting polyhedra in dimension~$4$ were constructed by Walz (unpublished) and Stachel~\cite{Sta00}. In dimensions $5$ and higher, the first examples of flexible polyhedra were constructed by the author~\cite{Gai13} in all the three spaces~$\E^n$, $\Lambda^n$, and~$\bS^n$. In addition, in~\cite{Gai15} the author showed that in spheres~$\bS^n$ of all dimensions there exist embedded flexible polyhedra with the combinatorial type of the cross-polytope that are contained in the open hemispheres~$\bS^n_+$. (Recall that the cross-polytope is the  regular polytope dual to the cube.) However, the problem on the existence of embedded flexible polyhedra in Euclidean and Lobachevsky spaces of dimensions~$4$ and higher is still open.

\begin{figure}
\begin{center}
\includegraphics[scale=0.25]{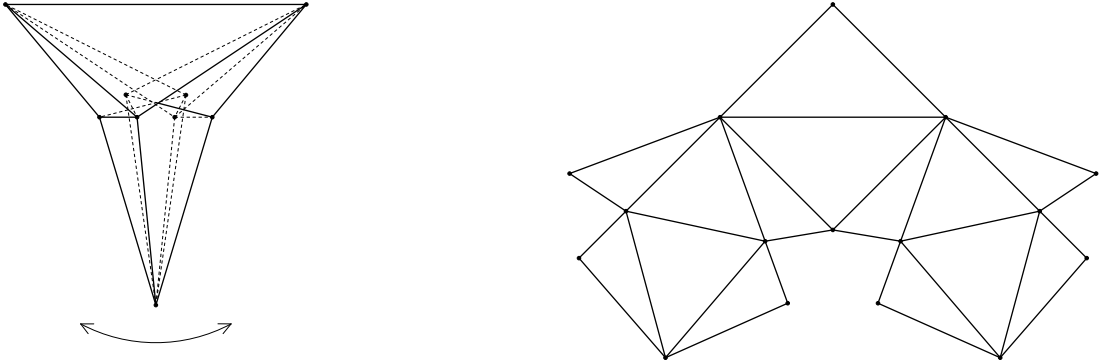}
\end{center}
\caption{Steffen's flexible polyhedron and its unfolding}\label{fig_steffen}
\end{figure}

In 1978 Connelly~\cite{Con78} conjectured that the volume of any flexible polyhedron in~$\E^3$ is constant during the flexion. This assertion is now called the \textit{Bellows conjecture\/}. Notice that this conjecture makes sense for all polyhedra without the requirement of embeddedness, since for self-intersecting polyhedra there is a natural concept of a \textit{generalized oriented volume\/}, see Section~\ref{section_def_res} for details. The proof of the Bellows conjecture by Sabitov~\cite{Sab96}--\cite{Sab98b} is one of the most amazing results in the theory of flexible polyhedra. Another proof was obtained in~\cite{CSW97}. Notice that flexible polygons change their areas during the flexion, so no analogue of the Bellows conjecture holds in dimension~$2$.

It is natural to consider the analogues of the Bellows conjecture for flexible polyhedra in all spaces of constant curvature of dimensions~$3$ and higher. If $\X^n=\E^n$ or $\Lambda^n$, then the \textit{Bellows conjecture in~$\X^n$} says that the volume of any flexible polyhedron in~$\X^n$ is constant during the flexion. The author~\cite{Gai11},~\cite{Gai12} proved the Bellows conjecture in all Euclidean spaces~$\E^n$, $n\ge 4$. 

The aim of the present paper is to prove that the Bellows conjecture holds for  all bounded flexible polyhedra in odd-dimensional Lobachevsky spaces~$\Lambda^{n}$, $n\ge 3$. 

\begin{theorem}\label{theorem_main}
The generalized oriented volume of any bounded flexible polyhedron in~$\Lambda^n,$ where $n$ is odd and $n\ge 3,$ is constant during the flexion.
\end{theorem}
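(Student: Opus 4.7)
My plan is to mimic Sabitov's proof of the Bellows conjecture in the Euclidean setting, replacing ``algebraic function'' with ``branch of a multi-valued analytic function with countable monodromy.'' Concretely, I aim to exhibit the generalized oriented volume of the flexible polyhedron~$P$ along a flexion as a continuous real-valued function taking values in a fixed countable subset of~$\R$; continuity will then force constancy.

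\textbf{Reduction and analytic continuation.} First I would triangulate the polyhedral surface~$P$ into $(n-1)$-simplices using only its vertices and fix a basepoint~$o\in\Lambda^n$. The generalized oriented volume then becomes an alternating sum of signed volumes of the $n$-simplices obtained by coning the $(n-1)$-simplices of the triangulation to~$o$. Each such $n$-simplex is determined, up to an isometry of~$\Lambda^n$, by its Gram matrix $(c_{ij})$ with $c_{ij}=\cosh\dist(v_i,v_j)$, and under a flexion the entries~$c_{ij}$ corresponding to edges of~$P$ are preserved while the remaining ones vary analytically. Viewing the hyperbolic $n$-simplex volume $V_n$ as a real-analytic function of its Gram entries on the open set of non-degenerate simplices, one uses Schl\"afli's formula
\[
dV_n=\frac{1}{n-1}\sum_F V_{n-2}(F)\,d\alpha_F,
\]
in which each~$d\alpha_F$ is an explicit algebraic $1$-form in the~$c_{ij}$ and $V_{n-2}(F)$ is a lower-dimensional hyperbolic volume, to analytically continue $V_n$ to a multi-valued holomorphic function on a Zariski-open subset of~$\C^{\binom{n+1}{2}}$.

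\textbf{Countable monodromy in odd dimensions.} Inducting on~$n$, I would then show that for \emph{odd}~$n$ the monodromy of~$V_n$ along loops avoiding the ramification locus lies in a countable subgroup of~$\R$. The role of parity is that iterating Schl\"afli's formula in odd dimension reduces the problem, at the base case, to the one-dimensional ``volume'' $V_1=\dist$, which is a logarithm of an algebraic function of the Gram entries and hence has countable monodromy, whereas in even dimensions one encounters Gauss--Bonnet-type contributions whose behaviour under analytic continuation is far harder to control (in accordance with the counterexamples in open hemispheres of every dimension~\cite{Gai15}). Summing the resulting multi-valued analytic continuations of the individual simplex volumes with the correct signs over the triangulation yields the analytic continuation of the total volume of~$P$; its monodromy is again contained in a countable subgroup of~$\R$.

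\textbf{Conclusion and main obstacle.} Along a flexion, the entries~$c_{ij}$ corresponding to edges of~$P$ are frozen, so the total volume, tracked by analytic continuation along the flexion, is a continuous function of the flexion parameter taking values in a single coset of the countable monodromy subgroup, hence in a countable subset of~$\R$; being continuous, it must be constant. The principal technical obstacle is the monodromy analysis of the inductive step: one has to track the ``period group'' through the iteration of Schl\"afli's formula, verify that the combinatorial closedness of~$P$ (which manifests in the fact that the dihedral angles of the simplices around each $(n-2)$-face of the triangulation sum to~$2\pi$) conspires with the odd-dimensional arithmetic to keep the total period group countable, and exclude the generation of new continuous families of periods on each degeneration stratum. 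This is where the bulk of the work will lie.
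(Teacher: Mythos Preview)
Your final step contains a genuine gap. Knowing that the monodromy of the analytically continued total volume lies in a countable subgroup of~$\R$ does \emph{not} imply that the volume along a flexion takes only countably many values. The monodromy group records the differences between branches at a \emph{fixed} point of the base; it says nothing about the range of a single branch as the point moves along a non-closed path. Along a flexion you are following one branch, which is a perfectly smooth real-valued function of~$t$ with no a priori constraint on its image. Sabitov's Euclidean argument works because there the volume is a root of a fixed polynomial whose coefficients depend only on the (frozen) edge lengths, so at every moment the volume lies in one fixed finite set; your countable monodromy statement has no analogue of this ``fixed set independent of~$t$.''

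The paper's argument repairs exactly this point, and the repair is where odd dimension really enters. From Schl\"afli's formula applied to the \emph{polyhedron} (not the individual simplices), one gets $\CV_K(P)=\frac{i}{n-1}\sum_F V_{F,\bell}\Log Q_F(P)+c$ on each stratum~$S$ of the configuration space, so any two branches of its analytic continuation to the complexification~$\Xi\supset S$ differ by a \emph{real} constant. Independently, Theorem~\ref{theorem_key} (the hard technical result, proved by induction on odd~$n$) shows that for a loop~$\gamma$ with even linking number with~$\CH$, the monodromy difference of each simplex volume is \emph{purely imaginary}; summing over the coning simplices, the monodromy difference of $\CV_K$ along~$\gamma^2$ is purely imaginary. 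Being both real and purely imaginary, it is zero, so $\tCV_K$ is single-valued holomorphic on a Zariski open subset of~$\Xi$; then a Liouville-type argument (logarithmic growth of the imaginary part) forces it to be constant. Note also that your diagnosis of parity is off: in even-dimensional~$\Lambda^n$ all branches of~$\tV_{\Lambda^n}$ are real on~$\CC_{\Lambda^n}$ (Theorem~\ref{theorem_key2}), which is \emph{nicer} monodromy, but it gives the same ``real difference'' information as Schl\"afli and hence no cancellation; the spherical counterexamples you cite are not evidence about even-dimensional Lobachevsky space, which remains open.
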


To make this assertion rigorous, we need to give rigorous definitions of a flexible polyhedron and of a generalized oriented volume. This will be done in Section~\ref{section_def_res}. Theorem~\ref{theorem_main2} in that section is a more precise formulation of Theorem~\ref{theorem_main}.

In Lobachevsky spaces, alongside with bounded flexible polyhedra, one can consider flexible polyhedra of finite volume with some vertices on the absolute. It remains unknown whether the Bellows conjecture is true for such polyhedra. Also, it still remains unknown whether the Bellows conjecture is true in even-dimensional Lobachevsky spaces.

In the spherical case there exist the following trivial examples of flexible polyhedra with non-constant volumes. Consider any flexible polygon with non-constant area in the equatorial sphere~$\bS^2\subset\bS^3$, and take the suspension over it with the vertices at the poles of~$\bS^3$. It is easy to see that the volume of the obtained flexible polyhedron is non-constant. Iterating this construction, we can obtain examples of flexible polyhedra with non-constant volumes in spheres of all dimensions.  Such trivial examples become possible because they are too large and contain pairs of antipodal points of the sphere. Therefore, the Bellows conjecture for~$\bS^n$, $n\ge 3$, was usually formulated as follows: \textit{The volume of any flexible polyhedron contained in the open hemisphere $\bS^n_+\subset\bS^n$ is constant during the flexion.} However, this assertion is also false. The first example of a flexible polyhedron in~$\bS^3_+$ with non-constant volume was constructed by Alexandrov~\cite{Ale97}. Recently,  the author~\cite{Gai15} has constructed embedded flexible cross-polytopes with non-constant volumes in $\bS^n_+$ for all $n\ge 3$. Thus the Bellows conjecture in spheres is false. In~\cite{Gai15} the author has suggested the \textit{Modified bellows conjecture,} which claims that, for any flexible polyhedron~$P_t$ in~$\bS^n$, $n\ge 3$, one can replace some vertices of~$P_t$ with their antipodes so that the generalized oriented volume of the obtained flexible polyhedron will become constant during the flexion. Up to now, no counterexample to the Modified bellows conjecture is known.

A survey of some other results and problems on flexible polyhedra and their volumes can be found in~\cite{Sab11}.

The main ingredient of  our proof of Theorem~\ref{theorem_main} is  the study of the analytic continuation of the volume of a bounded simplex in~$\Lambda^n$ considered as a function of the hyperbolic cosines of its edge lengths. Much more studied is the problem of expressing the volumes of convex polytopes in~$\Lambda^n$ or~$\bS^n$ from their dihedral angles. The fundamental results in this area were obtained by Lobachevsky, Schl\"afli, Coxeter, and Milnor; a good survey can be found in~\cite[Ch. 7]{AVS88}. In particular, it is known that the functions expressing the volumes of simplices in~$\bS^n$ and~$\Lambda^n$ from their dihedral angles can be obtained from each other by analytic continuation, see~\cite{Cox35},~\cite{Aom77}. Besides, in the Lobachevsky case, the volume of a simplex can be expressed from its dihedral angles not only for bounded simplices but also for simplices with some vertices on the absolute. The functions expressing the volumes of simplices in~$\bS^n$ and~$\Lambda^n$ from their edge lengths have fewer good properties. Nevertheless, they are much more convenient in the study of flexible polyhedra, since  edge lengths are constant during flexions while  dihedral angles vary.

Let $\Delta^n\subset\Lambda^n$ be a bounded $n$-dimensional simplex with vertices $v_0,\ldots,v_n$, and let $\ell_{jk}$ be the length of an edge~$[v_{j}v_k]$. We put $c_{jk}=\cosh\ell_{jk}$, $c_{jj}=1$, and consider the matrix $C(\Delta^n)=(c_{jk})$ of size $(n+1)\times(n+1)$. If we identify the Lobachevsky space~$\Lambda^n$ with its standard vector model, namely, with the half of the hyperboloid $\langle\bx,\bx\rangle=1$, $x_0>0$ in the pseudo-Euclidean vector space~$\R^{1,n}$ with the inner product
$$
\langle\bx,\by\rangle=x_0y_0-x_1y_1-\cdots-x_ny_n,
$$
then the matrix $C(\Delta^n)$ will become the Gram matrix of the vertices of~$\Delta^n$.

We denote by~$\CG_{(n)}(\C)$ (respectively, by~$\CG_{(n)}(\R)$) the affine space of all complex (respectively, real) symmetric  matrices~$C$ of size $(n+1)\times(n+1)$ with units on the diagonal. Let $\CC_{\Lambda^n}\subset\CG_{(n)}(\R)$ be the subset consisting of all matrices~$C(\Delta^n)$ corresponding to non-degenerate bounded simplices $\Delta^n\subset\Lambda^n$. Obviously, $\CC_{\Lambda^n}$ is a domain in~$\CG_{(n)}(\R)$.

The simplex~$\Delta^n$ can be recovered from a matrix~$C\in\CC_{\Lambda^n}$ uniquely up to isometry, and the volume of~$\Delta^n$ is a real analytic function of~$C$, which will be denoted by~$V_{\Lambda^n}(C)$. Aomoto~\cite{Aom77} studied the analytic continuation of the function~$V_{\Lambda^n}(C)$ using Schl\"afli's formula for the differential of the volume, see Section~\ref{section_continue} for details. He proved that the analytic continuation of~$V_{\Lambda^n}(C)$ yields a multi-valued function on~$\CG_{(n)}(\C)$ that branches along~$X$ and has no other singularities, where $X$ is the hypersurface in~$\CG_{(n)}(\C)$ consisting of all matrices~$C$ such that at least one of the principal minors of~$C$ vanishes. 
In other words, the function $V_{\Lambda^n}(C)$ admits the analytic continuation along every path in $\CG_{(n)}(\C)\setminus X$. The multi-valued function on $\CG_{(n)}(\C)\setminus X$ obtained by the analytic continuation of~$V_{\Lambda^n}(C)$ will be denoted by~$\tV_{\Lambda^n}(C)$.  
 
Our main result concerning the analytic continuation of the volume of a simplex in an odd-dimensional Lobachevsky space is as follows. 

\begin{theorem}\label{theorem_key}
Suppose that $n$ is odd.   Let $\gamma\colon[0,1]\to \CG_{(n)}(\C)\setminus X$ be an arbitrary closed path such that\/ $\gamma(0)=\gamma(1)\in\CC_{\Lambda^n}$. Let $V_1(C)$ and $V_2(C)$ be two branches of the multi-valued function~$\tV_{\Lambda^n}(C)$ in a neighborhood of\/~$\gamma(0)$ such that\/ $V_2(C)$ is obtained from~$V_1(C)$ by the analytic continuation along~$\gamma$. Then 
\begin{equation*}
V_2(\gamma(0))-(-1)^{\lk(\gamma,\CH)}V_1(\gamma(0))\in i\R,
\end{equation*}
where $\CH\subset \CG_{(n)}(\C)$ is the hypersurface consisting of all degenerate matrices~$C,$ and\/ $\lk(\gamma,\CH)$ is the linking number of\/~$\gamma$ and~$\CH$. 
\end{theorem}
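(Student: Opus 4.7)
The plan is to induct on odd $n \ge 1$, using Schl\"afli's formula as the bridge between dimensions. For the base case $n = 1$, we have $V_{\Lambda^1}(c) = \Arcosh(c)$ on $\CG_{(1)}(\C) = \C$ minus $X = \CH = \{\pm 1\}$. A direct computation shows the local monodromies are $V \mapsto -V$ around $c=1$ (from the expansion $\Arcosh(c) \sim \sqrt{2(c-1)}$) and $V \mapsto -V + 2\pi i$ around $c=-1$ (using $(c+\sqrt{c^2-1})(c-\sqrt{c^2-1}) = 1$). For every loop $\gamma$, the resulting monodromy has the form $V \mapsto (-1)^{\lk(\gamma, \CH)} V + 2\pi i m$ for some $m \in \Z$, which verifies the theorem.

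For the inductive step, observe that both the map $\gamma \mapsto (-1)^{\lk(\gamma, \CH)}$ and the ``sign modulo $i\R$'' monodromy of $\tV_{\Lambda^n}$ are homomorphisms $\pi_1(\CG_{(n)}(\C)\setminus X, \gamma(0)) \to \{\pm 1\}$, so it suffices to match them on generators. A natural generating set consists of small loops around smooth points of the irreducible components $X_J = \{\det C_J = 0\}$, indexed by subsets $J \subseteq \{0,\ldots,n\}$ with $|J| \ge 2$. I would apply Schl\"afli's formula
\[
(n-1)\, dV_{\Lambda^n}(C) = -\sum_F V_{\Lambda^{n-2}}(C_F)\, d\alpha_F(C),
\]
summed over codimension-$2$ faces $F$, to reduce the monodromy of $V_n$ to (i) the monodromy of $V_{n-2}$ on the face Gram matrices $C_F$, controlled by the inductive hypothesis applied to the lifted loop $\gamma_F$ in $\CG_{(n-2)}(\C)\setminus X_{n-2}$, and (ii) the monodromy of the algebraic $1$-forms $d\alpha_F$, whose branching is governed by the square roots of the principal minors $\det C_{F'}$ attached to the $(n-1)$-faces $F'$ containing $F$.

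The main obstacle is the local analysis around each component $X_J$. For $|J| \le n-2$, the monodromy of $V_n$ is shown to lie in $i\R$ by combining the inductive hypothesis, applied to the face loops $\gamma_F$ inside $\CG_{(n-2)}(\C)\setminus X_{n-2}$, with the fact that $d\alpha_F$ has no branching along such components. For $|J| \in \{n-1, n\}$, $V_n$ itself develops a Puiseux-type singularity: near a smooth point of $\CH = X_{\{0,\ldots,n\}}$ one expects an expansion $V_n = \mathrm{const} + (\det C)^{1/2} g(C)$ with $g$ analytic, giving the sign flip $V \mapsto -V$ modulo a purely imaginary constant, whereas near $X_J$ with $|J|=n-1$ the half-integer contributions from $d\alpha_F$ and from $V_{n-2}(C_F)$ combine with even parity, producing no sign change. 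Pinning down these Puiseux expansions---in particular, verifying that the odd dimensionality of $n$ is essential for the half-integer exponent at $\CH$ and that the inductive hypothesis at dimension $n-2$ supplies the analogous expansion for the face volumes---is the most delicate step; it hinges on Jacobi-type identities among the principal minors of $C$ and on a careful tracking of which square roots cancel in parity modulo $2$.
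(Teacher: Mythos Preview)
Your base case and the overall inductive strategy via Schl\"afli agree with the paper, but the inductive step has a genuine gap that the paper's machinery is specifically designed to overcome.

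The problematic move is the reduction to generators. You assert that the ``sign modulo $i\R$'' monodromy is a homomorphism to $\{\pm 1\}$ and hence determined by small loops around the components of~$X$. But the condition ``$M_\gamma V - (-1)^{\lk(\gamma,\CH)}V \in i\R$ at the basepoint'' is a pointwise statement about a holomorphic function, not a group-theoretic one. Even if you verify, for every branch $V''$ near a smooth point $C_i$ of a component~$X_J$, that the local monodromy satisfies $M_\delta V'' - \epsilon_J V'' \in i\R$ \emph{at $C_i$}, this does not transport to the basepoint $C^*\in\CC_{\Lambda^n}$: the function $M_\delta V'' - \epsilon_J V''$ is holomorphic and non-constant, so its value in $i\R$ at $C_i$ says nothing about its analytic continuation back along~$\alpha^{-1}$ to~$C^*$. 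Composing two such generator loops compounds the problem. The paper avoids this entirely by reformulating the statement as ``$\tW_\gamma = i(M_\gamma\tV - (-1)^{\lk(\gamma,\CH)}\tV)$ is \emph{totally real} on~$\CC_{\Lambda^n}$'', a global condition on a real-analytic subset, and then proving a technical lemma (Lemma~\ref{lem_tech2}) that gives sufficient local conditions on~$d\tV$ near the components $Y,Z,W$ of~$X$ for this total-reality to propagate through the integral. The argument uses the $\hpi$-unipotent filtration in an essential way and is not reducible to a generator-by-generator check.

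Separately, your treatment of the codimension-one components $\CH_J$ with $|J|=n$ (the set~$Z$ in the paper's decomposition) is too vague. Here $d\tV_{\Lambda^n}$ has a genuine logarithmic pole, and one must show that its residue $Q_\beta(C^0)$ is \emph{real} (condition~(v${}'$)). This is not a parity argument: the paper reduces it to the statement that a certain signed linear combination $\sum_j s_j V_{\Lambda^{n-2}}(C_{J\setminus\{j\}})$ is constant as the degenerate configuration varies, and then invokes a geometric lemma (Lemma~\ref{lem_lin_comb}) identifying the unique such linear combination as the obvious one $V_0 - \sum_{j\ge 1} V_j = 0$. This step has no analogue in your outline and is where the odd-dimensional hypothesis is used nontrivially beyond mere square-root parity. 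Finally, your Puiseux expansion near $\CH$ tacitly assumes the constant term vanishes; for the principal branch this follows from the zero boundary condition (the volume of a degenerate simplex is zero), which the paper makes explicit as a hypothesis in Lemma~\ref{lem_tech2}, but for non-principal branches it does not hold and must be handled by the global machinery.
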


First, this result is the main technical step in our proof of Theorem~\ref{theorem_main}. Second, it seems to be interesting in itself.  There exists an analogue of this theorem for simplices in even-dimensional Lobachevsky spaces and in spheres. It is even easier to formulate, but, unfortunately, it does not yield any analogue of Theorem~\ref{theorem_main}. If $\X^n=\bS^n$, then we put $c_{jk}=\cos\ell_{jk}$ instead of $c_{jk}=\cosh\ell_{jk}$ so that $C(\Delta^n)$ is the Gram matrix of the vertices of~$\Delta^n$, where $\bS^n$ is realized as the unit sphere in the Euclidean vector space~$\R^{n+1}$. As in the Lobachevsky case, we denote by~$\CC_{\bS^n}$ the domain in~$\CG_{(n)}(\R)$ consisting of all matrices $C(\Delta^n)$ corresponding to non-degenerate simplices $\Delta^n\subset\bS^n$, and we denote by $\tV_{\bS^n}(C)$ the multi-valued analytic function obtained by the analytic continuation of the function~$V_{\bS^n}(C)$ computing the volume of a spherical simplex from the cosines of its edge lengths. Again, by a result of Aomoto~\cite{Aom77}, the ramification locus of~$\tV_{\bS^n}(C)$  is the same hypersurface~$X\subset \CG_{(n)}(\C)$. 

\begin{theorem}\label{theorem_key2}
Suppose that either\/ $\X^n=\Lambda^n$ and\/ $n$ is even or\/ $\X^n=\bS^n$ and\/ $n$ is arbitrary. Then all branches of the multi-valued function\/~$\tV_{\X^n}(C)$ are real in all points of\/~$\CC_{\X^n}$.
\end{theorem}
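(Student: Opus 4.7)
The plan is to prove Theorem~\ref{theorem_key2} by induction on $n$, using Schl\"afli's formula
\[
(n-1)\,dV_{\X^n}(C)=\kappa\sum_{F}V_F(C)\,d\theta_F(C),
\]
where the sum runs over codimension-$2$ faces $F$ of the simplex, $V_F(C)$ is the $(n-2)$-volume of $F$ as a simplex in $\X^{n-2}$, $\theta_F(C)$ is the dihedral angle along $F$, and $\kappa=\pm 1$ is the curvature. The dimensional step $n\mapsto n-2$ preserves the parity of $n$, so the inductive hypothesis always stays within the two cases of the theorem (spherical, or even-dimensional Lobachevsky).

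For the base cases, $V_{\bS^1}(c)=\arccos c$ has monodromy generated by the involution $\arccos\mapsto-\arccos$ around $c=\pm 1$ together with shifts by $2\pi\Z$, so every branch is real on $\CC_{\bS^1}=(-1,1)$. For $\X^2\in\{\bS^2,\Lambda^2\}$, the Gauss--Bonnet formula gives $V_{\X^2}=\kappa\sum_{i}\alpha_i-\kappa\pi$, and the law of cosines expresses each vertex angle $\alpha_i$ as the arccosine of a real rational function of the entries~$c_{jk}$. By the preceding case every branch of each~$\alpha_i$ is real on~$\CC_{\X^2}$, and hence so is every branch of~$V_{\X^2}$.

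For the induction step, I fix $n\ge 3$ and a loop $\gamma\subset\CG_{(n)}(\C)\setminus X$ based at a point $C_0\in\CC_{\X^n}$, together with a real branch $V_1$ of $\tV_{\X^n}$ near $C_0$ and its analytic continuation $V_2$ along $\gamma$. Because $H_1\bigl(\CG_{(n)}(\C)\setminus X\bigr)$ is generated by small loops $\eta_I$ around the irreducible components $X_I=\{M_I=0\}$ of $X$ (indexed by subsets $I\subset\{0,\ldots,n\}$ of size $\ge 2$, with $M_I$ the corresponding principal minor), it suffices to show that each monodromy sends the real germ $V_1$ to a real germ. Near a smooth real point of $X_I$, Schl\"afli's formula and an Aomoto-type local analysis express the monodromy of~$V_{\X^n}$ in terms of the monodromies of the factors $V_F$ and $d\theta_F$: by the inductive hypothesis the branches of $V_F$ (which is $\tV_{\X^{n-2}}$ at a principal submatrix of~$C$) are real at real $C$, while the angles $\theta_F$ are arccosines of real rational functions of~$C$ and so, by the base-case argument, have the same property. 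Combining these contributions shows that $V_2(C_0)-V_1(C_0)\in\R$.

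The main obstacle is the local monodromy computation at the components~$X_I$, because Schl\"afli's formula controls only the differential $dV_{\X^n}$ and the individual summands $V_F\,d\theta_F$ are not separately single-valued on $\CG_{(n)}(\C)\setminus X$: the form $d\theta_F=-d\!\cos\theta_F/\!\sin\theta_F$ contains a factor $1/\sqrt{\det G}$, and by induction $V_F$ inherits square-root branchings of its own. One must verify that in the spherical and even-hyperbolic cases these branch ambiguities cancel so as to produce a real period, whereas in odd-dimensional Lobachevsky space the same computation leaves over an extra residue in $i\R$, accounting for the difference between Theorem~\ref{theorem_key2} and Theorem~\ref{theorem_key}. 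The cleanest way to handle this is probably to lift the analysis to a ramified cover of $\CG_{(n)}(\C)\setminus X$ on which all the relevant square roots become single-valued, so that the action of the deck group on $V_{\X^n}$ can be read off explicitly and its reality at $\CC_{\X^n}$ checked face-by-face.
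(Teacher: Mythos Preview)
Your overall strategy --- induction on $n$ with step~$2$ via Schl\"afli's formula, using the same base cases $n=1,2$ as the paper --- is correct, and your identification of the main obstacle is accurate. But the proposal has a genuine gap at the point where you write ``it suffices to show that each monodromy sends the real germ $V_1$ to a real germ.'' Monodromy is an action of $\pi_1(\CG_{(n)}(\C)\setminus X)$, not of $H_1$. Even if you work with generators of $\pi_1$ (small loops around the components $X_I$ do generate $\pi_1$ after suitable conjugation), knowing that each $M_{\eta_I}$ sends the \emph{principal} branch $V_1$ to a real branch is not enough: to conclude that $M_{\eta_{I_1}}\cdots M_{\eta_{I_k}}V_1$ is real you would need to know that each $M_{\eta_I}$ sends \emph{every} real branch to a real branch, and you do not yet know which branches are real --- that is exactly the statement you are trying to prove. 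Your final paragraph essentially concedes this: you say one ``must verify'' the cancellation but do not carry it out.

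The paper resolves this circularity by a different mechanism. Rather than analysing monodromy generator-by-generator, it proves a general criterion (Lemma~\ref{lem_tech}) guaranteeing that the primitive $\I(\theta)$ of a closed $1$-form $\theta$ is totally real on $U=\CC_{\X^n}$, under hypotheses on $\theta$ that can be checked \emph{without} already knowing all branches of the primitive. The key hypotheses are: (iii) $\theta$ lies in a finite level of the $\hat\pi$-unipotent filtration (supplied by Aomoto's result, Proposition~\ref{propos_fh}); (iv)--(v) prescribed local normal forms of $\theta$ near the components $\CH$ and $\CH_J$, $|J|=n$; and two purely real-geometric conditions (A)--(B) on how the components of $X(\R)$ meet. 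The proof of Lemma~\ref{lem_tech} proceeds by induction on the filtration level, converting the problem into showing that a certain map $\mu\colon\pi_1\to\R$ vanishes, and this is where conditions (A), (B), (iv), (v) are used. The inductive hypothesis of Theorem~\ref{theorem_key2} enters only in verifying condition~(v) --- one must show that the residue of $d\tV_{\X^n}$ along $\CH_J$ is purely imaginary, and this reduces (via formula~\eqref{eq_Q_beta_new}) to knowing that all branches of $\tV_{\X^{n-2}}$ are real at a single well-chosen point of $\CC_{\X^{n-2}}$. This is the ``face-by-face'' check you allude to, but it is embedded in a global argument that handles all of $\pi_1$ at once rather than one generator at a time.
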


The main tool in our proofs of Theorems~\ref{theorem_key},~\ref{theorem_key2}, and~\ref{theorem_main} is Schl\"afli's formula for the differential of the volume of a convex polytope in~$\X^n$  that is deformed preserving its combinatorial type. Recall that this formula is as follows:
\begin{equation}\label{eq_Schlaefli}
KdV_n(P)=\frac{1}{n-1}\sum_{F\subset P,\,\dim F=n-2}V_{n-2}(F)\,d\alpha_F,
\end{equation}
where the sum is taken over all codimension~$2$ faces~$F$ of an $n$-dimensional polytope~$P$, $V_k$~denotes the $k$-dimensional volume, $\alpha_F$ is the dihedral angle of~$P$ at~$F$, and $K$ is the (sectional) curvature of~$\X^n$. This formula was proved by Schl\"afli~\cite{Sch58} for~$\bS^n$ and by Sforza~\cite{Sfo07} for~$\Lambda^n$, see also~\cite[Ch. 7, Sect.~2.2]{AVS88}. Actually, the convexity of~$P$ is unimportant, and formula~\eqref{eq_Schlaefli} holds true for all polyhedra, see Lemma~\ref{lem_Schlaefli}. Schl\"afli's formula is a very useful tool for studying the analytic properties of the volumes of simplices in~$\Lambda^n$ and~$\bS^n$. As it has already been mentioned, it was the main ingredient of Aomoto's construction of the analytic continuations of the functions~$V_{\X^n}(C)$. Let us also mention that Schl\"afli's formula was used by Rivin~\cite{Riv08} to give a simpler proof of  Milnor's conjecture on the continuity of the extension of the  function  computing the volume of a simplex from its dihedral angles, which was originally proved by Luo~\cite{Luo06}.

We shall use Schl\"afli's formula twice. First, we shall use it to prove Theorems~\ref{theorem_key} and~\ref{theorem_key2} by induction on~$n$. Second, we shall use it to deduce Theorem~\ref{theorem_main} from Theorem~\ref{theorem_key}. The scheme of our exposition will be roughly as follows. We shall show that the configuration space~$\Sigma$ of any flexible polyhedron~$P_t$ in~$\Lambda^{2m+1}$ is a connected component of a real affine variety, and  consider the analytic continuation of the generalized oriented volume of~$P_t$ to the complexification~$\Sigma_{\C}$ of this variety. On the one hand, Schl\"afli's formula for~$P_t$ will imply that the difference between any two branches of the obtained multi-valued analytic function~$\tCV$ is real. On the other hand, Theorem~\ref{theorem_key} will imply that this difference is purely imaginary on~$\Sigma$. Thus, we shall conclude that the function~$\tCV$ is single-valued. Finally, we shall show that the imaginary part of~$\tCV$ has logarithmic growth, and shall use Liouville's  theorem on entire functions to show that the function~$\tCV$  is constant.  

\begin{remark}\label{remark_even}
In the even-dimensional case both Schl\"afli's formula and Theorem~\ref{theorem_key2} will yield the same result, namely, they will yield that the difference between any two branches of the multi-valued function~$\tCV$ is real on~$\Sigma$. This is not sufficient to conclude that the function~$\tCV$ is single-valued. Hence we cannot apply Liouville's theorem on entire functions. Therefore our method for proving Theorem~\ref{theorem_main} does not work in even dimensions. Nevertheless, the question of whether the Bellows conjecture holds true in even-dimensional Lobachevsky spaces remains open, since no counterexample is known, too.
\end{remark}   

In the Euclidean space~$\E^n$ Schl\"afli's formula yields that the right-hand side of~\eqref{eq_Schlaefli} vanishes. Hence the differential of the volume does not enter this formula. This immediately implies that the \textit{total mean curvature}
\begin{equation}\label{eq_TMC}
\mathrm{TMC}(P)=\sum_{F\subset P,\,\dim F=n-2}V_{n-2}(F)(\pi-\alpha_F)
\end{equation}
 of a polyhedron~$P$ in~$\E^n$ is constant during flexions of~$P$, see~\cite{Ale85}, \cite{AlRi98}. Similarly,  for a flexible polyhedron~$P_t$ in~$\Lambda^n$, the linear combination $(n-1)V_n(P_t)-\mathrm{TMC}(P_t)$ is constant during the flexion. (See Remark~\ref{remark_TMC} for a rigorous definition of the total mean curvature of a non-embedded polyhedron.)

\begin{cor}\label{cor_TMC}
The total mean curvature of any bounded flexible polyhedron in~$\Lambda^n,$ where $n$ is odd and $n\ge 3,$ is constant during the flexion.
\end{cor}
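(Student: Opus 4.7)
The plan is to deduce this corollary directly from Theorem~\ref{theorem_main} via Schl\"afli's formula~\eqref{eq_Schlaefli}. I would first interpret a flexion $P_t$ as a smooth one-parameter deformation of the polyhedral surface, so that the $(n-2)$-dimensional volumes $V_{n-2}(F)$ of the codimension-$2$ faces are constant in $t$ (since these faces are contained in the rigid $(n-1)$-faces, whose isometry type does not change during a flexion), while only the dihedral angles $\alpha_F(t)$ vary. I would use Lemma~\ref{lem_Schlaefli}, which extends~\eqref{eq_Schlaefli} beyond the convex case, together with the definition of the total mean curvature for non-embedded polyhedra alluded to in Remark~\ref{remark_TMC}; this is the only genuinely technical point, and it amounts to choosing the correct signed convention for $\pi-\alpha_F$ that is compatible with Schl\"afli's formula.

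Next I would simply differentiate the defining expression~\eqref{eq_TMC}. Since $V_{n-2}(F)$ is constant in $t$, one gets
\begin{equation*}
d\,\mathrm{TMC}(P_t)=-\sum_{F\subset P_t,\,\dim F=n-2}V_{n-2}(F)\,d\alpha_F.
\end{equation*}
Plugging Schl\"afli's formula~\eqref{eq_Schlaefli} with $K=-1$ into the right-hand side yields
\begin{equation*}
d\,\mathrm{TMC}(P_t)=(n-1)\,dV_n(P_t),
\end{equation*}
so that the combination $(n-1)V_n(P_t)-\mathrm{TMC}(P_t)$ is constant during any flexion of any bounded polyhedron in $\Lambda^n$, in every dimension $n\ge 3$ (odd or even).

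Finally, I would invoke Theorem~\ref{theorem_main}: in odd dimensions $n\ge 3$, the generalized oriented volume $V_n(P_t)$ is itself constant in $t$. Combined with the identity above, this immediately gives $d\,\mathrm{TMC}(P_t)=0$, proving the corollary. The only obstacle in the argument is the rigorous bookkeeping of the sign conventions for dihedral angles and face orientations in the non-embedded case, so that both Schl\"afli's formula and the definition of $\mathrm{TMC}$ apply to the generalized setting used throughout the paper; once this is set up consistently (as in Remark~\ref{remark_TMC} and Lemma~\ref{lem_Schlaefli}), the proof is an immediate corollary of Theorem~\ref{theorem_main}.
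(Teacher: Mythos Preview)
Your proposal is correct and follows essentially the same argument as the paper: differentiate~\eqref{eq_TMC} using that the $(n-2)$-volumes are constant during a flexion, apply Schl\"afli's formula with $K=-1$ to obtain that $(n-1)V_n(P_t)-\mathrm{TMC}(P_t)$ is constant, and then invoke Theorem~\ref{theorem_main}. The paper likewise notes (see the paragraph preceding Corollary~\ref{cor_TMC} and Remark~\ref{remark_TMC}) that the only subtlety is the sign/branch convention for the oriented dihedral angles in the non-embedded case, exactly as you flagged.
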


In the three-dimensional case, there exist explicit formulae for the volumes of simplices in~$\bS^3$ and~$\Lambda^3$. The first formula of such kind was obtained by Sforza~\cite{Sfo07}. More convenient formulae in terms of Lobachevsky's function $\Lob(x)=-\int_0^x\log|2\sin t|\,dt$ or, equivalently, in terms of the dilogarithm $\mathrm{Li}_2(x)=-\int_0^x\log(1-t)\frac{dt}{t}$ were obtained in~\cite{ChKi99}, \cite{MuYa05}, \cite{MuUs05},~\cite{DeMe05}. It is possible that Theorems~\ref{theorem_key} and~\ref{theorem_key2} in the three-dimensional case can be deduced from these explicit formulae. However, this seems to be rather hard, since all these formulae are very cumbersome. In this paper, we do not use them.

This paper is organized as follows. Sections~\ref{section_fh}--\ref{section_proof_key} contain the proofs of Theorems~~\ref{theorem_key} and~\ref{theorem_key2}. The main technical lemma is Lemma~\ref{lem_tech}, which provides a sufficient condition that ensures that all branches of a multi-valued analytic function on a principal  Zariski open subset $\FX\subset\C^m$ are real on a connected component~$U$ of~$\FX\cap\R^m$. The key role in this  lemma is played by a special unipotent filtration on the ring of multi-valued analytic functions on a complex analytic manifold. The construction of this filtration is given in Section~\ref{section_fh}. Lemma~\ref{lem_tech} is formulated and proved in Section~\ref{section_tot_real}. In Section~\ref{section_continue} we  recall Aomoto's results on the analytic continuations of the functions~$V_{\X^n}(C)$.   In Sections~\ref{section_AB} and~\ref{section_proof_key} we apply Lemma~\ref{lem_tech} to prove  Theorems~~\ref{theorem_key} and~\ref{theorem_key2}. Section~\ref{section_def_res} contains all necessary definitions and notation concerning flexible polyhedra. Also in this section we give a more precise formulation of Theorem~\ref{theorem_main} (Theorem~\ref{theorem_main2}). In Sections~\ref{section_config}--\ref{section_proof_main_arbitrary}, we study the analytic continuation of the generalized oriented volume of a flexible polyhedron, and  prove Theorem~\ref{theorem_main2}. 

Throughout the paper, we shall deal with various classes of functions on complex analytic manifolds. Let us fix the terminology that we shall use in the sequel. We shall use the term `analytic function' for a \textit{multi-valued\/} analytic function, though in most cases we shall specify explicitly that the function under consideration is multi-valued. However, the term `holomorphic function' will always stand for a \textit{single-valued\/} analytic function, either in the point or on the whole manifold under consideration. The ring of holomorphic functions on a complex analytic manifold~$Y$ will be denoted by~$\CO(Y)$. Recall that a function that is holomorphic on the whole affine space~$\C^m$ is called an \textit{entire function.} Almost all complex analytic manifolds that are considered in this paper are smooth affine algebraic varieties over the field of complex numbers.  Following the standard terminology, the restriction of a polynomial in coordinates in~$\C^m$ to a (closed) affine submanifold $Y\subset \C^m$ will be called a \textit{regular function.} The ring of regular functions on~$Y$ will be denoted by~$\C[Y]$. Besides, by~$\C(Y)$ we shall denote the field of rational functions on~$Y$, i.\,e., the quotient field of the ring~$\C[Y]$.

The author is grateful to S.\,O.\,Gorchinsky, S.\,Yu.\,Nemirovsky, and I.\,Kh.\,Sabitov for useful discussions.

\section{Unipotent filtrations on spaces of analytic functions}\label{section_fh}

Let $\FX$ be a connected complex analytic manifold. Choose a base point $z^*\in\FX$. Let $\tFX$ be the universal covering of~$\FX$. It can be naturally identified with the set of homotopy classes of paths in $\FX$ starting at~$z^*$ with respect to the homotopy fixing the endpoints. For the base point in~$\tFX$ we take the point~$\tz^*$ corresponding to the constant path staying at~$z^*$. Let $p\colon\tFX\to \FX$ be the projection. The fundamental group 
$$
\pi=\pi_1(\FX,z^*)
$$ 
acts on~$\tFX$ from the left by deck transformations; we denote by~$T_{\gamma}$ the deck transformation corresponding to an element $\gamma\in\pi$.  For a path~$\alpha$ in~$\FX$ starting at~$z^*$, we denote by~$\tilde\alpha$ the lift of~$\alpha$ in~$\tFX$ starting at~$\tz^*$. Then the end of~$\tilde\alpha$  depends only on the homotopy class of~$\alpha$. If $\gamma$ is a loop, then the end of~$\tilde\gamma$ coincides with~$T_{\gamma}\tz^*$. With some abuse of notation, we denote the homotopy class of a loop by the same letter as the loop itself. Throughout this paper all paths and, in particular, all loops  are supposed to be piecewise smooth. For any path~$\alpha$, we denote by~$\alpha^{-1}$ the same path traversed in the opposite direction.

Let $\A^q(\tFX)$ be the space of holomorphic $q$-forms on~$\tFX$. In particular, $\A^0(\tFX)=\CO(\tFX)$ is the space of holomorphic functions on~$\tFX$. For $\gamma\in\pi$, the \textit{monodromy operator\/} $M_{\gamma}\colon\A^q(\tFX)\to\A^q(\tFX)$ is the pullback by~$T_{\gamma}$. The correspondence $\gamma\mapsto M_{\gamma}$ is an anti-homomorphism, i.\,e., $M_{\gamma_1\gamma_2}=M_{\gamma_2}M_{\gamma_1}$. The  \textit{variation operator\/}  $\Var_{\gamma}\colon\A^q(\tFX)\to\A^q(\tFX)$ is given by
\begin{equation*}
\Var_{\gamma}\theta=M_{\gamma}\theta-\theta.
\end{equation*}

Now, we define subspaces $\CF_k^q\subset\A^q(\tFX)$, $k\in\Z$, $k\ge -1$. First, we put $\CF_{-1}^q=\{0\}$. Second, we recursively take for $\CF_k^{q}$ the subspace of~$\A^q(\tFX)$ consisting of all~$\theta$ such that $\Var_{\gamma}\theta\in \CF_{k-1}^q$ for all $\gamma\in\pi$, $k=0,1,\ldots$. The filtration 
$$
\{0\}=\CF_{-1}^q\subset\CF_0^q\subset\CF_1^q\subset\CF_2^q\subset\cdots
$$
will be called the \textit{$\pi$-unipotent filtration.} Certainly, the union of all  $\CF_k^q$ generally does not coincide with~$\A^q(\tFX)$.

A $q$-form $\theta\in\A^q(\tFX)$ can be considered as a multi-valued analytic $q$-form on~$\FX$  with a chosen principal branch near~$z^*$. (The monodromy operators~$M_{\gamma}$ act by changing the principal branch without changing the multi-valued $q$-form itself.) In particular, the space $\A^q(\FX)$ of single-valued holomorphic $q$-forms on~$\FX$ can be naturally considered as a subspace of~$\A^q(\tFX)$. Namely, the embedding $\A^q(\FX)\hookrightarrow\A^q(\tFX)$ is given by the pullback~$p^*$ by the projection~$p$. It is easy to see that the subspace~$p^*\A^q(\FX)$ is exactly the subspace of all $\pi$-invariant holomorphic $q$-forms on~$\tFX$. Hence $p^*\A^q(\FX)=\CF^q_0$.

Though the above definitions have been given for an arbitrary~$q$, we shall mostly be interested in the cases $q=0$ and $q=1$, i.\,e., functions and $1$-forms.
Since the manifold~$\tFX$ is simply connected, any closed $1$-form $\theta\in\A^1(\tFX)$ is exact. 
Then the function 
\begin{equation*}
\I(\theta)(\tz)=\int_{\tz^*}^{\tz}\theta
\end{equation*}
is well defined and holomorphic on~$\tFX$.


\begin{lem}\label{lem_properties}
\textnormal{(1)} The spaces~$\CF_k^q$ are invariant under the monodromy operators~$M_{\gamma}$ and the variation operators~$\Var_{\gamma}$.

\textnormal{(2)} If\/ $\theta_1\in\CF_k^{q}$ and $\theta_2\in\CF_l^{r},$ then $\theta_1\wedge\theta_2\in\CF_{k+l}^{q+r}$. 

\textnormal{(3)} If\/ $\theta\in\CF_k^{q},$ then $d\theta\in\CF_{k}^{q+1}$.

\textnormal{(4)} If\/ $\theta\in\CF_k^{1}$ and $d\theta=0,$ then $\I(\theta)\in\CF_{k+1}^{0}$. 
\end{lem}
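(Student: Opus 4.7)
The plan is to prove all four parts by induction on the filtration index $k$ (or on $k+l$ for part (2)), using formal identities for the variation operator acting on monodromy, wedge products, the de Rham differential, and the primitive operator $\I$. The base case is always $k=-1$, where $\theta=0$ and there is nothing to prove, so in each part only the inductive step is substantive.

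For (1), the key identity is obtained by regrouping terms: for $\gamma,\gamma_0\in\pi$ and $\theta\in\CF_k^q$,
\begin{equation*}
\Var_\gamma M_{\gamma_0}\theta = M_\gamma M_{\gamma_0}\theta - M_{\gamma_0}\theta = M_{\gamma_0\gamma}\theta - M_{\gamma_0}\theta = \Var_{\gamma_0\gamma}\theta - \Var_{\gamma_0}\theta,
\end{equation*}
where I use that $\gamma\mapsto M_\gamma$ is an anti-homomorphism. Both terms on the right lie in $\CF_{k-1}^q$ by definition, so $M_{\gamma_0}\theta\in\CF_k^q$; then $\Var_\gamma$-invariance is immediate because $\Var_\gamma\theta=M_\gamma\theta-\theta$ is a difference of elements of $\CF_k^q$. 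For (2), I would use the ``Leibniz rule''
\begin{equation*}
\Var_\gamma(\theta_1\wedge\theta_2) = (\Var_\gamma\theta_1)\wedge(M_\gamma\theta_2) + \theta_1\wedge(\Var_\gamma\theta_2),
\end{equation*}
obtained by adding and subtracting $\theta_1\wedge M_\gamma\theta_2$. Combined with part (1) (so that $M_\gamma\theta_2\in\CF_l^r$) and the induction hypothesis applied to $k+l-1$, each summand lies in $\CF_{k+l-1}^{q+r}$, giving the claim.

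Part (3) is the easiest: since $T_\gamma$ is holomorphic, $d$ commutes with $M_\gamma$ and hence with $\Var_\gamma$, so $\Var_\gamma(d\theta)=d(\Var_\gamma\theta)\in d\CF_{k-1}^q\subset\CF_{k-1}^{q+1}$ by induction, which says exactly that $d\theta\in\CF_k^{q+1}$. For (4), the central calculation is to split the integral defining $\I(\theta)(T_\gamma\tz)$ at the intermediate point $T_\gamma\tz^*$ and change variables in the second piece via $T_\gamma$, using that $\theta$ is closed so the integrals are homotopy-invariant on $\tFX$. This yields the identity
\begin{equation*}
\Var_\gamma \I(\theta) \;=\; c(\gamma,\theta) + \I(\Var_\gamma\theta),\qquad c(\gamma,\theta) = \int_{\tilde\gamma}\theta\in\C.
\end{equation*}
Here $\Var_\gamma\theta\in\CF_{k-1}^1$ is closed (by part (3) applied to $d\theta=0$), so by the induction hypothesis $\I(\Var_\gamma\theta)\in\CF_k^0$; the constant $c(\gamma,\theta)$ trivially lies in $\CF_0^0\subset\CF_k^0$. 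Hence $\Var_\gamma\I(\theta)\in\CF_k^0$ for every $\gamma$, which means $\I(\theta)\in\CF_{k+1}^0$, as required.

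The one step that needs genuine care, rather than routine bookkeeping, is the derivation of the identity in (4): one must verify rigorously, using simple connectedness of $\tFX$ and invariance of $T_\gamma^*\theta=M_\gamma\theta$, that $\int_{T_\gamma\tz^*}^{T_\gamma\tz}\theta=\int_{\tz^*}^{\tz}M_\gamma\theta$ and that $c(\gamma,\theta)$ really is a constant depending only on the homotopy class of $\gamma$. Everything else reduces to applying the inductive hypothesis to the elementary algebraic identities above.
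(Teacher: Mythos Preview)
Your proof is correct and follows essentially the same inductive strategy as the paper's; the only cosmetic differences are that the paper uses the conjugation identity $\Var_{\beta}M_{\gamma}=M_{\gamma}\Var_{\gamma\beta\gamma^{-1}}$ for part~(1) and the other form of the Leibniz rule $\Var_{\gamma}(\theta_1\wedge\theta_2)=(\Var_{\gamma}\theta_1)\wedge\theta_2+(M_{\gamma}\theta_1)\wedge(\Var_{\gamma}\theta_2)$ for part~(2), while your choices work equally well. Your treatment of part~(4) is in fact slightly more explicit than the paper's, which simply states $\Var_{\gamma}\I(\theta)=\I(\Var_{\gamma}\theta)+c$ for some constant~$c$ without identifying $c=\int_{\tilde\gamma}\theta$.
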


\begin{proof}
Assertion~(1)  follows immediately from the equality $$\Var_{\beta}M_{\gamma}=M_{\gamma}\Var_{\gamma\beta\gamma^{-1}}.$$

Let us prove assertion~(2) by induction on~$k+l$. If $k+l=-1$, then either $\theta_1=0$ or $\theta_2=0$, hence, the assertion is true. Assume that the assertion is proved for $k+l=m-1$, and prove it for $k+l=m$. For each $\gamma\in\pi$, we have
$$
\Var_{\gamma}(\theta_1\wedge\theta_2)=
(\Var_{\gamma}\theta_1)\wedge\theta_2+(M_{\gamma}\theta_1)\wedge(\Var_{\gamma}\theta_2).
$$
Since $\Var_{\gamma}\theta_1\in\CF_{k-1}^q$ and $\Var_{\gamma}\theta_2\in\CF_{l-1}^r$, the inductive assumption yields $\Var_{\gamma}(\theta_1\wedge\theta_2)\in\CF_{k+l-1}^{q+r}$. Hence $\theta_1\wedge\theta_2\in\CF^{q+r}_{k+l}$.

Assertion~(3) follows immediately by induction on~$k$. The basis of induction for $k=-1$ is obvious. The induction step is obtained from the  formula $\Var_{\gamma}d\theta=d\Var_{\gamma}\theta$. 

Now, let us prove assertion~(4) also by induction on~$k$. If $k=-1$, the assertion is trivial. Assume it for $k-1$ and prove it for~$k$, $k\ge 0$. Suppose that  $\theta\in\CF_k^{1}$ and $d\theta=0$. Then for each $\gamma\in\pi$, we have $\Var_{\gamma}\theta\in\CF_{k-1}^1$, $d\Var_{\gamma}\theta=0$,  and
$$
\Var_{\gamma}\I(\theta)=\I(\Var_{\gamma}\theta)+c
$$
for some constant $c\in\C$. By the inductive assumption, we have $\I(\Var_{\gamma}\theta)\in\CF_k^0$. Since $k\ge 0$, we have $c\in\CF_k^0$. Hence  $\Var_{\gamma}\I(\theta)\in\CF_k^0$. Therefore $\I(\theta)\in\CF_{k+1}^0$. 
\end{proof} 

For each connected covering~$\breve{\FX}$ of~$\FX$, the universal covering of~$\breve{\FX}$ coincides with~$\tFX$. Hence, replacing in the above construction the manifold~$\FX$ with the manifold~$\breve{\FX}$, we obtain a new filtration~$\breve{\CF}_k^q$ on~$\A^q(\tFX)$, which will be called the \textit{$\breve{\pi}$-unipotent filtration,} where $\breve{\pi}\subset\pi$ is the fundamental group of~$\breve{\FX}$. The following special case will be especially important for us. Let~$\hpi$ be the kernel of the natural epimorphism $\pi\to H_1(\FX;\Z_2)$, where $\Z_2=\Z/2\Z$ is the cyclic group of order~$2$, and let $\hFX$ be the covering of~$\FX$ corresponding to the subgroup~$\hpi$.  Then $\pi_1(\hFX,\hz^*)=\hpi$, where $\hz^*$ is the base point for~$\hFX$  chosen so that $\tz^*$ goes to~$\hz^*$, and then to~$z^*$  under the covering mappings $\tFX\to\hFX\to\FX$. We denote the projection $\tFX\to\hFX$ by $\hp$. Let 
$$
\{0\}=\hCF_{-1}^q\subset\hCF_0^q\subset\hCF_1^q\subset\hCF_2^q\subset\cdots
$$
be the $\hpi$-unipotent filtration on~$\A^q(\tFX)$. Then the space~$\hCF_k^q$ is the space consisting of all   $\theta\in\A^q(\tFX)$ such that $\Var_{\gamma}\theta\in\hCF_{k-1}^q$ for all $\gamma\in\hpi$.
Lemma~\ref{lem_properties} holds true for the filtration~$\hCF_k^q$. In particular,  assertion~(1) of this lemma says that the spaces~$\hCF_k^q$ are invariant under all monodromy operators~$M_{\gamma}$ such that $\gamma\in\hpi$. Since $\hpi$ is a normal subgroup of~$\pi$, the proof of  assertion~(1) of Lemma~\ref{lem_properties} in fact yields the following stronger assertion.

\begin{lem}
The spaces $\hCF_k^q$ are invariant under all monodromy operators $M_{\gamma}$, $\gamma\in\pi$.
\end{lem}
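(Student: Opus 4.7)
The plan is to proceed by induction on $k$, mirroring the proof of assertion~(1) of Lemma~\ref{lem_properties} but exploiting the fact that $\hpi$ is a normal subgroup of~$\pi$. Normality holds because $\hpi$ is the kernel of a homomorphism from $\pi$ to the abelian group $H_1(\FX;\Z_2)$, so $\gamma\beta\gamma^{-1}\in\hpi$ whenever $\beta\in\hpi$ and $\gamma\in\pi$. This is the only structural fact that distinguishes the present statement from the case $\gamma\in\hpi$ already covered by assertion~(1) applied to the covering~$\hFX$.

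For the base case $k=-1$, there is nothing to prove since $\hCF_{-1}^q=\{0\}$ is visibly $M_\gamma$-invariant. For the inductive step, assume that $M_\gamma$ preserves $\hCF_{k-1}^q$ for every $\gamma\in\pi$, and fix $\theta\in\hCF_k^q$ and $\gamma\in\pi$. Using the same conjugation identity
\begin{equation*}
\Var_{\beta}M_{\gamma}=M_{\gamma}\Var_{\gamma\beta\gamma^{-1}}
\end{equation*}
that was established in the proof of Lemma~\ref{lem_properties}(1), one computes, for any $\beta\in\hpi$,
\begin{equation*}
\Var_{\beta}(M_{\gamma}\theta)=M_{\gamma}\bigl(\Var_{\gamma\beta\gamma^{-1}}\theta\bigr).
\end{equation*}
By normality, $\gamma\beta\gamma^{-1}\in\hpi$, and since $\theta\in\hCF_k^q$, the definition of $\hCF_k^q$ gives $\Var_{\gamma\beta\gamma^{-1}}\theta\in\hCF_{k-1}^q$. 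The inductive hypothesis then yields $M_{\gamma}\Var_{\gamma\beta\gamma^{-1}}\theta\in\hCF_{k-1}^q$. Hence $\Var_{\beta}(M_{\gamma}\theta)\in\hCF_{k-1}^q$ for every $\beta\in\hpi$, which is exactly the condition $M_{\gamma}\theta\in\hCF_k^q$.

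There is no serious obstacle here; the only point that needs to be noted carefully is the use of the normality of $\hpi$ in $\pi$, which is what makes the induction go through for arbitrary $\gamma\in\pi$ rather than only for $\gamma\in\hpi$. The argument is essentially the remark already made in the text just before the statement of the lemma, and I would simply write it out as a short inductive proof along these lines.
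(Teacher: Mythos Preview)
Your proof is correct and is essentially the same as the paper's approach: the paper simply remarks that the proof of assertion~(1) of Lemma~\ref{lem_properties} goes through verbatim once one uses that $\hpi$ is normal in~$\pi$, which is exactly the inductive argument you have written out.
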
 

\begin{example}
Put $\FX=\C\setminus\{0\}$, and consider the function $\Log z$ as an element of~$\CO(\tFX)$. The variation of this function along any loop is a constant of the form $2\pi i n$, $n\in\Z$. Hence $\Log z$ belongs to $\CF^0_1$ and, a fortiori, to~$\hCF^0_1$. A more interesting example is as follows. Put $\FX=\C\setminus\{-1,1\}$, and consider the function $\Arcsin z$ as an element of~$\CO(\tFX)$. The subgroup $\hpi\subset\pi$ has index~$4$ and consists the homotopy classes of all loops~$\gamma$ that have  even winding numbers around the points $1$ and $-1$. It is easy to see that the monodromy of~$\Arcsin z$ along any loop~$\gamma$ whose homotopy class belongs to~$\hpi$ yields a function of the form $\Arcsin z+2\pi n$, $n\in\Z$. Hence the function $\Arcsin z$ belongs to $\hCF^{0}_1$. However, it is easy to see that $\Arcsin z$  belongs to none of the spaces $\CF^0_k$.
\end{example}

Assume that the group  $H_1(\FX;\Z_2)$ is finite; let $2^s$ be the order of this group. We introduce the notation
$$
H=H_1(\FX;\Z_2),\qquad
H^*=H^1(\FX;\Z_2).
$$
There is a canonical identification~$H^*=\Hom(\pi,\Z_2)$. We denote by~$\langle\cdot,\cdot\rangle$ the non-degenerate pairing $H^*\otimes H\to\Z_2$.  The homology class in~$H$ represented by a loop~$\gamma$ or by a homotopy class~$\gamma\in\pi$ will be denoted by~$[\gamma]$.

It is useful to give the following characterization of the spaces~$\hCF_k^q$  in terms of  all monodromy operators~$M_{\gamma}$, $\gamma\in\pi$. For each $\rho\in H^*$, we define the corresponding \textit{twisted variation operator\/} by
\begin{equation*}
\Var_{\gamma}^{\rho}\theta=M_{\gamma}\theta-(-1)^{\rho(\gamma)}\theta.
\end{equation*}

\begin{lem}\label{lem_decompose}
Suppose that\/ $k,q\ge 0$. Then the space\/~$\hCF_k^q$ coincides with the space of all\/ $q$-forms\/ $\theta\in\A^q(\tFX)$ that admit a decomposition
\begin{equation}\label{eq_decompose}
\theta=\sum_{\rho\in H^*}\theta_{\rho}
\end{equation} 
such that\/ $\Var_{\gamma}^{\rho}\theta_{\rho}\in\hCF_{k-1}^q$ for all\/ $\gamma\in\pi$ and all\/ $\rho\in H^*$. If\/ $\theta\in\hCF_k^q,$ then we can take for\/~$\theta_{\rho}$ the\/ $1$-forms given by
\begin{equation}\label{eq_Phi_rho}
\theta_{\rho}=\frac{1}{2^s}\sum_{j=1}^{2^s}(-1)^{\rho(\gamma_j)}M_{\gamma_j}\theta,
\end{equation}
where\/ $\gamma_1=1,\gamma_2,\ldots,\gamma_{2^s}\in\pi$ are some representatives of all\/  $2^s$ cosets\/~$\pi/\hpi$.
\end{lem}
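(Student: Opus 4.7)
The plan is to recognize this as the isotypic decomposition for the finite abelian group $H=\pi/\hpi$ (which is an elementary abelian $2$-group of order $2^s$) acting on $\A^q(\tFX)$ modulo $\hCF^q_{k-1}$, where the irreducible characters of $H$ over $\C$ are precisely $(-1)^{\rho}$, $\rho\in H^*$. The operators~\eqref{eq_Phi_rho} are then the usual central idempotent projectors, and the conditions $\Var_\gamma^\rho\theta_\rho\in\hCF_{k-1}^q$ say exactly that $\theta_\rho$ lies in the $\rho$-isotypic piece modulo $\hCF_{k-1}^q$.

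First I would verify the easy direction. Suppose $\theta=\sum_\rho\theta_\rho$ with $\Var_\gamma^\rho\theta_\rho\in\hCF_{k-1}^q$ for every $\gamma\in\pi$ and every $\rho$. For $\gamma\in\hpi$ we have $\rho(\gamma)=0$, hence $\Var_\gamma^\rho\theta_\rho=\Var_\gamma\theta_\rho$, so $\Var_\gamma\theta=\sum_\rho\Var_\gamma\theta_\rho\in\hCF_{k-1}^q$, giving $\theta\in\hCF_k^q$. Next I would check that the explicit formula~\eqref{eq_Phi_rho} really produces a decomposition: the double sum $\sum_\rho\theta_\rho=\frac{1}{2^s}\sum_j\bigl(\sum_\rho(-1)^{\rho(\gamma_j)}\bigr)M_{\gamma_j}\theta$ collapses to $M_{\gamma_1}\theta=\theta$ by orthogonality of characters of $H$ (applied to the class $[\gamma_j]\in H$), since $[\gamma_1]=0$ is the only coset annihilated by every $\rho$.

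The remaining and only slightly technical step is to show that, when $\theta\in\hCF_k^q$, the $\theta_\rho$ defined by~\eqref{eq_Phi_rho} satisfy $\Var_\gamma^\rho\theta_\rho\in\hCF_{k-1}^q$ for every $\gamma\in\pi$. Using the anti-homomorphism property $M_\gamma M_{\gamma_j}=M_{\gamma_j\gamma}$ and the normality of $\hpi$, I would write $\gamma_j\gamma=\gamma_{\sigma(j)}h_j$ with $h_j\in\hpi$ and a permutation $\sigma$ of $\{1,\dots,2^s\}$ depending on $\gamma$, so that $M_{\gamma_j\gamma}=M_{h_j}M_{\gamma_{\sigma(j)}}$. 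Because $\rho$ vanishes on $\hpi$ and is a homomorphism $\pi\to\Z_2$, we have $\rho(\gamma_j)\equiv\rho(\gamma)+\rho(\gamma_{\sigma(j)})\pmod 2$, whence
\begin{equation*}
M_\gamma\theta_\rho=\frac{(-1)^{\rho(\gamma)}}{2^s}\sum_i(-1)^{\rho(\gamma_i)}M_{h_{\sigma^{-1}(i)}}M_{\gamma_i}\theta
\end{equation*}
after reindexing $i=\sigma(j)$. Subtracting $(-1)^{\rho(\gamma)}\theta_\rho$ gives
\begin{equation*}
\Var_\gamma^\rho\theta_\rho=\frac{(-1)^{\rho(\gamma)}}{2^s}\sum_i(-1)^{\rho(\gamma_i)}\Var_{h_{\sigma^{-1}(i)}}\!\bigl(M_{\gamma_i}\theta\bigr).
\end{equation*}
By the preceding lemma each $M_{\gamma_i}\theta$ stays in $\hCF_k^q$, and $h_{\sigma^{-1}(i)}\in\hpi$, so every term on the right lies in $\hCF_{k-1}^q$ by definition of the filtration. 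Hence $\Var_\gamma^\rho\theta_\rho\in\hCF_{k-1}^q$, as required.

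The only place one might stumble is the sign/indexing bookkeeping in the last step, in particular remembering that $M$ is an anti-homomorphism and that one needs the invariance of $\hCF_k^q$ under \emph{all} $M_\gamma$ with $\gamma\in\pi$ (not only $\gamma\in\hpi$); this is precisely why the stronger invariance lemma stated just before~\ref{lem_decompose} is needed.
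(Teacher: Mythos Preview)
Your proof is correct and follows essentially the same approach as the paper. Both arguments verify the easy direction and the character-sum identity $\sum_\rho\theta_\rho=\theta$ identically; for the main step, the paper factors $\gamma_j\gamma=(\gamma_j\gamma\gamma_{\nu_\gamma(j)}^{-1})\gamma_{\nu_\gamma(j)}$ with $\nu_\gamma$ an involution on cosets, obtaining $M_{\gamma_{\nu_\gamma(j)}}\Var_{\gamma_j\gamma\gamma_{\nu_\gamma(j)}^{-1}}\theta$, while you factor $\gamma_j\gamma=\gamma_{\sigma(j)}h_j$ and obtain $\Var_{h_{\sigma^{-1}(i)}}(M_{\gamma_i}\theta)$. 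These are the same computation with the $\hpi$-part placed on opposite sides, and both conclude via the $\pi$-invariance of the spaces~$\hCF_\bullet^q$ noted just before the lemma.
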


\begin{proof}
First, assume that $\theta$ admits a decomposition of the form~\eqref{eq_decompose}. For all $\gamma\in\hpi$ and all $\rho\in H^*$, we have $\rho(\gamma)=0$, hence, $\Var_{\gamma}\theta_{\rho}=\Var^{\rho}_{\gamma}\theta_{\rho}\in \hCF_{k-1}^q$. Therefore $\theta_{\rho}\in\hCF_{k}^q$ for all~$\rho$, thus, $\theta\in\hCF_{k}^q$.

Second, assume that $\theta\in\hCF_{k}^q$. Consider the $q$-forms~$\theta_{\rho}$ given by~\eqref{eq_Phi_rho}.
Since $[\gamma_1]=0$, $[\gamma_2],\ldots,[\gamma_{2^s}]$ are all different elements of~$H$, we easily see that
$\sum_{\rho\in H^*}\theta_{\rho}=\theta$. 
Let us prove that $\Var_{\gamma}^{\rho}\theta_{\rho}\in\hCF_{k-1}^q$ for all $\gamma\in \pi$. Let $\nu_{\gamma}$ be the involutive permutation of~$\{1,\ldots,2^s\}$ such that $[\gamma_{\nu_{\gamma}(j)}]=[\gamma_j]+[\gamma]$ in~$H$. We have,
\begin{multline*}
\Var_{\gamma}^{\rho}\theta_{\rho}=\frac{1}{2^s}\sum_{j=1}^{2^s}(-1)^{\rho(\gamma_j)}\Var_{\gamma}^{\rho}M_{\gamma_j}\theta=
\frac{1}{2^s}\sum_{j=1}^{2^s}\left((-1)^{\rho(\gamma_j)}M_{\gamma_j\gamma}\theta
-(-1)^{\rho(\gamma_j)+\rho(\gamma)}M_{\gamma_j}\theta\right)\\
{}=\frac{1}{2^s}\sum_{j=1}^{2^s}(-1)^{\rho(\gamma_j)}\left(M_{\gamma_{j}\gamma}\theta
-M_{\gamma_{\nu_{\gamma}(j)}}\theta\right)=\frac{1}{2^s}\sum_{j=1}^{2^s}(-1)^{\rho(\gamma_j)}M_{\gamma_{\nu_{\gamma}(j)}}\Var_{\gamma_{j}\gamma\gamma_{\nu_{\gamma}(j)}^{-1}}\theta.
\end{multline*}
All summands in the latter sum belong to $\hCF_{k-1}^q$, since $\gamma_{j}\gamma\gamma_{\nu_{\gamma}(j)}^{-1}\in\hpi$ for all~$j$. Hence $\Var_{\gamma}^{\rho}\theta_{\rho}\in\hCF_{k-1}^q$.
\end{proof}

Recall the notion of an \textit{iterated integral\/} introduced by Parshin~\cite{Par66} and Chen~\cite{Che67}, see also~\cite{Che73}.
Let  $\omega_1,\ldots,\omega_k$  be  $1$-forms on~$\FX$, and let $\alpha\colon[0,1]\to\FX$ be a path. Let $f_j(t)\,dt$ be the pullback of~$\omega_j$ by the mapping~$\alpha$, $j=1,\ldots,k$. 
By definition, the iterated integral of  $\omega_1,\ldots,\omega_k$ along $\alpha$ is given by
$$
\int_{\alpha}\omega_1\cdots\omega_k=\mathop{\int\!\cdots\!\int}\limits_{0\le t_1\le \cdots \le t_k\le 1}f_1(t_1)\cdots f_k(t_k)\,dt_1\cdots dt_k.
$$ 
A linear combination of iterated integrals
\begin{equation}\label{eq_lin_comb}
\sum_{n}c_n\int_{\alpha}\omega_{n,1}\cdots\omega_{n,k_n},
\end{equation}
where $\omega_{n,j}\in\A^1(\FX)$, $c_n\in\C$, can be considered as a function of~$\alpha$. This function is called \textit{homotopy invariant\/} if its value does not change under homotopies of~$\alpha$ preserving its endpoints. (Notice that the requirement of homotopy invariance is rather restrictive and does not hold automatically.) The maximum of the numbers~$k_n$ will be called the \textit{length\/} of the  linear combination of iterated integrals~\eqref{eq_lin_comb}. A homotopy invariant linear combination of iterated integrals determines a holomorphic function $\Phi\in\CO(\tFX)$ by 
\begin{equation}\label{eq_hiii}
\Phi(\talpha(1))=\sum_{n}c_n\int_{\alpha}\omega_{n,1}\cdots\omega_{n,k_n}
\end{equation}
for all paths~$\alpha$ in~$\FX$ starting at~$z^*$. It is a standard fact that the function~$\Phi$ has unipotent monodromy, see, for instance,~\cite{Aom77}. In our notation, this assertion is formulated as follows.

\begin{propos}\label{propos_iterate}
Any function $\Phi\in\CO(\tFX)$ given by a homotopy invariant linear combination of iterated integrals of length~$k$ of\/ $1$-forms~$\omega_{n,j}$ holomorphic on~$\FX$ belongs to~$\CF^0_k$.
\end{propos}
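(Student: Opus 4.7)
The proof naturally proceeds by induction on the length $k$. For the base case $k=0$, every iterated integral is an empty product equal to~$1$, so $\Phi$ is the constant $\sum_n c_n$; constants are $\pi$-invariant and hence lie in $\CF^0_0$.

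For the inductive step, the essential ingredient is the classical composition formula for iterated integrals: for any concatenation of paths $\gamma\alpha$ in $\FX$ and any holomorphic $1$-forms $\omega_1,\ldots,\omega_k$ on $\FX$,
\begin{equation*}
\int_{\gamma\alpha}\omega_1\cdots\omega_k=\sum_{j=0}^{k}\left(\int_\gamma\omega_1\cdots\omega_j\right)\left(\int_\alpha\omega_{j+1}\cdots\omega_k\right),
\end{equation*}
with the convention that an empty iterated integral equals $1$. Under the paper's conventions the deck transformation satisfies $T_\gamma\tilde\alpha(1)=\widetilde{\gamma\alpha}(1)$ for any loop $\gamma$ at $z^*$ and any path $\alpha$ starting at $z^*$, so
\begin{equation*}
(M_\gamma\Phi)(\tilde\alpha(1))=\sum_n c_n\int_{\gamma\alpha}\omega_{n,1}\cdots\omega_{n,k_n}.
\end{equation*}
Applying the composition formula and subtracting $\Phi(\tilde\alpha(1))$ kills the $j=0$ contribution, leaving
\begin{equation*}
(\Var_\gamma\Phi)(\tilde\alpha(1))=\sum_n c_n\sum_{j=1}^{k_n}\left(\int_\gamma\omega_{n,1}\cdots\omega_{n,j}\right)\left(\int_\alpha\omega_{n,j+1}\cdots\omega_{n,k_n}\right).
\end{equation*}
Each factor $c_n\int_\gamma\omega_{n,1}\cdots\omega_{n,j}$ is a complex constant depending only on $\gamma$, while the residual iterated integrals over $\alpha$ have length at most $k-1$.

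Because $\Var_\gamma\Phi$ is a priori a holomorphic function on $\tFX$, the displayed right-hand side is automatically a homotopy invariant linear combination of iterated integrals of length $\le k-1$; the inductive hypothesis then places it in $\CF^0_{k-1}$. As this holds for every $\gamma\in\pi$, the very definition of the filtration yields $\Phi\in\CF^0_k$. The only non-routine ingredient is the composition formula itself, together with careful matching of the convention $M_{\gamma_1\gamma_2}=M_{\gamma_2}M_{\gamma_1}$ to the order of concatenation $\gamma\alpha$; I do not anticipate any substantive obstacle beyond this bookkeeping.
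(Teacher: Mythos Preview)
Your proposal is correct and takes essentially the same approach as the paper: induction on $k$, using the composition formula for iterated integrals along a concatenation to express $\Var_\gamma\Phi$ as a homotopy invariant linear combination of iterated integrals of length at most $k-1$. Your explicit remark that homotopy invariance of the variation follows automatically from $\Var_\gamma\Phi$ being holomorphic on $\tFX$ is a nice touch the paper leaves implicit.
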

\begin{proof}
One of the basic properties of iterated integrals is the following formula for the iterated integral along the concatenation of two paths, see~\cite[Prop.~2]{Par66} and~\cite[Prop.~1.5.1]{Che73}:
$$
\int_{\beta\alpha}\omega_1\cdots\omega_k=\sum_{l=0}^k\left(\int_{\beta}\omega_1\cdots\omega_l\right)\left(\int_{\alpha}\omega_{l+1}\cdots\omega_k\right),
$$
where the integral of the empty cortege  of $1$-forms is supposed to be equal to~$1$, and it is assumed that the concatenation~$\beta\alpha$ is well defined, that is, the end of~$\beta$ coincides with the origin of~$\alpha$. This formula easily implies that the variation of the function~$\Phi$ given by~\eqref{eq_hiii} along a loop~$\gamma$ with endpoints at~$z^*$ is given by
\begin{equation*}
(\Var_{\gamma}\Phi)(\talpha(1))=\sum_{n}\sum_{l=1}^{k_n}c_n\left(\int_{\gamma}\omega_{n,1}\cdots\omega_{n,l}\right)\left(\int_{\alpha}\omega_{n,l+1}\cdots\omega_{n,k_n}\right),
\end{equation*}
which is a homotopy invariant linear combination of iterated integrals along~$\alpha$ of length not greater than~$k-1$. The proposition follows by induction on~$k$.
\end{proof}

Similarly, any function $\Phi$ given by a homotopy invariant linear combination of iterated integrals of length~$k$ of\/ $1$-forms~$\omega_{n,j}$ holomorphic on~$\hFX$ will belong to~$\hCF^0_k$.

\section{Totally real functions}\label{section_tot_real}

Throughout this paper, the words `a real (respectively, complex) affine algebraic variety' always mean `a subset of~$\R^m$ (respectively, $\C^m$) that can be given by polynomial equations'. In other words, we do not distinguish between affine varieties that coincide as subsets of~$\R^m$ (respectively, $\C^m$) but correspond to different ideals in the polynomial ring. Besides, we do not require that affine varieties  are irreducible unless this is specified explicitly. We say that an affine variety $X\subset\C^m$ is a \textit{hypersurface\/} if all irreducible components of~$X$ are $(m-1)$-dimensional. So we allow a hypersurface to be  reducible. 

For an affine algebraic variety~$X\subset\C^m$, we denote by~$X^{reg}$ the set of regular  points of~$X$, and we denote by~$X(\R)$ the set of real points of~$X$, i.\,e., the intersection $X\cap\R^m$. We also put $X^{reg}(\R)=X^{reg}\cap X(\R)$.

An irreducible affine algebraic variety $X\subset\C^m$ will be called \textit{essentially real\/} if $X$ is given by a system of polynomial equations with real coefficients, and $X^{reg}(\R)\ne\emptyset$. The latter condition is equivalent to the condition $\dim_{\R}X(\R)=\dim_{\C}X$. For instance, the circle $z_1^2+z_2^2=1$ in~$\C^2$ is essentially real, but the imaginary circle $z_1^2+z_2^2=-1$ is not essentially real though is defined over reals. A reducible affine variety $X\subset\C^m$ will be called \textit{essentially real\/} if all its irreducible components are essentially real.

Now, let $X\subset\C^m$ be an essentially real hypersurface, let $X_1,\ldots,X_s$ be its irreducible components, and let $U$ be a connected component of the set $\R^m\setminus X(\R)$. We consider the complex analytic manifold $\FX=\C^m\setminus X$, choose a base point~$z^*$ for~$\FX$ such that $z^*\in U$, and apply to~$\FX$ all constructions in the previous section.  

\begin{defin}
A function $\Phi\in\CO(\tFX)$ will be called \textit{totally real\/} on~$U$ if the values $\Phi(\tz)$ are real for all  $\tz\in p^{-1}(U)$. 
\end{defin}

Regarding $\Phi$ as a multi-valued function on~$\FX$, we may equivalently say that $\Phi$ is totally real on~$U$ if \textit{all\/} branches of~$\Phi$ on~$U$ are real-valued. For instance, the function $\Arcsin z$ is totally real on $(-1,1)$. On the other hand, the function $\Log z$ is not totally real on~$(0,+\infty)$, though its principal branch is real-valued on this set.

Similarly, a $1$-form $\theta\in\A^1(\tFX)$ is said to be \textit{totally real\/} on~$U$ if $\theta(\tz,v)\in\R$ for all $\tz\in\tFX$ and all $v\in T_{\tz}\tFX$ such that $p(\tz)\in U$ and $p_*(v)\in T_{p(\tz)}\R^m$. Obviously,  a $1$-form $\theta=\sum_{j=1}^m\theta_j\,dz_j$ is totally real on~$U$ if and only if all functions~$\theta_j$ are totally real on~$U$, where $z_1,\ldots,z_m$ are the standard coordinates in~$\C^m$.

The following properties of totally real functions are straightforward:

\begin{enumerate}
\item The set of functions totally real on~$U$ is a subring of~$\CO(\tFX)$.
\item If $\Phi$ is a function totally real on~$U$, then the functions $M_{\gamma}\Phi$ are also totally real on~$U$ for all $\gamma\in\pi$.
\item If $\Phi$ is a function totally real on~$U$, then the $1$-form $d\Phi$ is also totally real on~$U$.
\end{enumerate}

The converse of~(3) is generally not true. For instance, the $1$-form $\frac{dz}{z}$ is totally real on $(0,+\infty)$, but its integral $$\I\left(\frac{dz}{z}\right)=\Log z+c$$ is not totally real on $(0,+\infty)$. Nevertheless, there is a very important for us special case when the converse of~(3) is true. To formulate this result, we need to introduce some notation.

Suppose that we are given a decomposition $X=Y\cup Z\cup W$ such that $Y$ is an irreducible component of~$X$, $Z=Z_1\cup\cdots\cup Z_q$ and $W=W_1\cup\cdots\cup W_r$ are unions of irreducible components of~$X$, and all irreducible components $Y$, $Z_1,\ldots, Z_q,$  $W_1,\ldots,W_r$ are pairwise distinct. Let $f(z)=0$, $g_1(z)=0,\ldots,$ $g_q(z)=0$, $h_1(z)=0,\ldots,$ $h_r(z)=0$ be  irreducible polynomial equations with real coefficients giving the hypersurfaces $Y,$ $Z_1,\ldots,Z_q$, $W_1,\ldots,W_r$, respectively. We agree to choose the signs of the polynomials $h_j(z)$ so that $h_j(z)>0$ on~$U$. (The signs of the polynomials~$f(z)$ and~$g_j(z)$ are unimportant.) 
To each point $z\in \R^m\setminus W(\R)$, we assign the vector $\kappa(z)=(\kappa_1(z),\ldots,\kappa_r(z))\in\Z_2^{r}$ such that $\kappa_j(z)=0$ if $h_j(z)>0$ and $\kappa_j(z)=1$ if $h_j(z)<0$. Then $\kappa_j(z)$ is the modulo 2 linking number of the pair of points $\{z^*,z\}$ and the hypersurface~$W_j(\R)$ in~$\R^m$.

\begin{lem}\label{lem_tech}
Assume that the $4$-tuple $(Y,Z,W,U)$ satisfies the conditions: 
\begin{itemize}
\item[(A)] The boundary $\partial U$ of the domain~$U$ contains a point that belongs to~$Y^{reg}(\R)$ and does not belong to~$Z\cup W$. 
\item[(B)] The vectors $\kappa(z),$  where $z$ runs over all points of\/~$Y^{reg}(\R)$ that do not belong to $Z\cup W,$ generate the whole group~$\Z_2^{r}$.
\end{itemize}
Let $\theta\in\A^1(\tFX)$ be a $1$-form  such that
\begin{itemize}
\item[(i)] $\theta$ is totally real on~$U.$
\item[(ii)] $\theta$ is closed.
\item[(iii)] $\theta$ belongs to~$\hCF_k^1$ for some~$k.$
\item[(iv)]  In a neighborhood of each point $z_0\in Y\setminus (Z\cup W)$ any branch of\/~$\theta$ has the form
\begin{equation}\label{eq_omega}
\theta=\frac{\omega}{\sqrt{f(z)}}\,,
\end{equation}
where $\omega$ is a $1$-form holomorphic at~$z_0$.
\item[(v)] In a neighborhood of each point $z_0\in Z_j^{reg}$ that does not belong to the union of all other irreducible components of~$X$ any branch of\/~$\theta$ has the form
\begin{equation}\label{eq_omega2}
\theta=\frac{ic\,dg_j(z)}{g_j(z)}+\omega,
\end{equation}
where $c$ is a real constant, and\/ $\omega$ is a $1$-form holomorphic at~$z_0$.
\end{itemize}
Then the function 
$\I(\theta)$
is totally real on~$U$.
\end{lem}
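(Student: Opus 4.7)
The plan is to prove the lemma by induction on $k$, with base case $k=-1$ (where $\hCF_{-1}^1=\{0\}$ forces $\theta=0$, so $\I(\theta)=0$ is trivially totally real). For the inductive step, fix $\theta\in\hCF_k^1$ satisfying (i)--(v), set $\Phi=\I(\theta)$, and note $\Phi\in\hCF_{k+1}^0$ by Lemma~\ref{lem_properties}(4). Since $\theta$ is totally real on $U$ and $U\subset\FX$ is a real-analytic submanifold, the branch of $\Phi$ obtained by integration along paths in $U$ starting from $z^*$ is real-valued. So it suffices to show that every period $P_\gamma(\theta):=\Var_\gamma\Phi(\tz^*)$ is real, which then yields reality of every branch of $\Phi$ over $U$.

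From the proof of Lemma~\ref{lem_properties}(4) one has $\Var_\gamma\Phi=P_\gamma(\theta)+\I(\Var_\gamma\theta)$. For $\gamma\in\hpi$, the $1$-form $\Var_\gamma\theta$ lies in $\hCF_{k-1}^1$, is closed and totally real on $U$, and inherits (iv) and (v) from $\theta$: if two branches of $\theta$ have shape $\omega_i/\sqrt{f}$ near a point of $Y\setminus(Z\cup W)$, their difference has the same shape; if two branches have logarithmic part $ic_i\,dg_j/g_j$ near $Z_j^{reg}$, their difference has the same form with real coefficient $c_1-c_2$. The inductive hypothesis gives that $\I(\Var_\gamma\theta)$ is totally real on $U$, so reality of $\Var_\gamma\Phi$ reduces to reality of $P_\gamma(\theta)$. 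To handle arbitrary $\gamma\in\pi$, I would apply Lemma~\ref{lem_decompose} to decompose $\theta=\sum_\rho\theta_\rho$ where each $\theta_\rho$ is a $(-1)^\rho$-twisted eigenform modulo $\hCF_{k-1}^1$; the twisted symmetrization~\eqref{eq_Phi_rho} preserves the local models (iv), (v) and the total reality on $U$, so one can work summand by summand.

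The reality of $P_\gamma(\theta)$ for a small loop around $Z_j^{reg}$ at a point avoiding the other components of $X$ is immediate from (v), since the residue calculation gives $-2\pi c\in\R$. The reality of $P_\gamma(\theta)$ for a small loop around $Y^{reg}$ at a point $z_0\in\partial U\cap Y^{reg}(\R)\setminus(Z\cup W)$, provided by condition~(A), follows from (iv): near $z_0$ the form $\omega/\sqrt{f}$, with $\omega$ real-analytic on $\R^m$ (forced by total reality of $\theta$ on $U$ together with $f>0$ on the $U$-side of $Y$), has a real square-root monodromy integral. The main obstacle will be the reality of periods around components $W_j$, where no local structure of $\theta$ is prescribed. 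Here I expect condition~(B) to be used decisively: the fact that the sign vectors $\kappa(z)$ for $z\in Y^{reg}(\R)\setminus(Z\cup W)$ span $\Z_2^r$ lets one connect $U$ to any $\Z_2^r$-labelled real region through transverse $Y^{reg}$-crossings at (iv)-points, so that a Schwarz-type reflection across the boundary pieces in $Y^{reg}(\R)$, combined with the inductively-handled $\hCF_{k-1}^1$ corrections and the twisted decomposition, should pin down the $W$-periods as real. Closing the induction will then require a careful bookkeeping of how the character $\rho$ couples with the $W$-linking data encoded by $\kappa$.
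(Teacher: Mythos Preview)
Your framework matches the paper's: induction on~$k$, decomposition $\theta=\sum_\rho\theta_\rho$ via Lemma~\ref{lem_decompose}, reduction to the reality of the values $\Psi(T_\gamma\tz^*)$ where $\Psi=\I(\theta_\rho)$, and the local computations at points of~$Y$ (condition~(A)) and~$Z_j$ (condition~(v)). Those parts are fine.

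The genuine gap is the final paragraph. ``Schwarz-type reflection \dots\ should pin down the $W$-periods as real'' is a hope, not an argument, and the missing mechanism is not a local one: nothing in the hypotheses constrains~$\theta$ near~$W$, so you cannot compute individual $W$-periods. The paper's route is global and algebraic. Fix~$\rho$ with $\theta_\rho\notin\hCF_{k-1}^1$, set $\mu(\gamma)=\Im\Psi(T_\gamma\tz^*)$, and first prove the twisted cocycle relation
\[
\mu(\gamma_1\gamma_2)=\mu(\gamma_1)+(-1)^{\rho(\gamma_1)}\mu(\gamma_2),
\]
which follows by applying the inductive hypothesis to $\Var_{\gamma_1}^{\rho}\theta_\rho\in\hCF_{k-1}^1$ (not to $\Var_{\gamma_1}\theta_\rho$, which need not lie there). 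Next, condition~(iv) forces $M_\delta\theta_\rho=-\theta_\rho$, and since $\theta_\rho\notin\hCF_{k-1}^1$ this forces $\rho(\delta)=1$; combined with $\mu(\delta)=0$ this makes the image~$A$ of~$\mu$ a finitely generated subgroup of~$\R$. If $A\ne 0$, compose with any surjection $A\to\Z_2$ to get a nonzero $\nu\in H^*=\Hom(\pi,\Z_2)$.

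The reflection idea then enters in a very specific form: for $z\in\R^m\setminus X(\R)$ one builds a loop~$\gamma$ with $[\gamma]=\chi(z)$ and $\bar\gamma=\gamma^{-1}$ by replacing real crossings of~$X(\R)$ with half-circuits; total reality of~$\theta_\rho$ gives $\mu(\gamma^{-1})=-\mu(\gamma)$, while the cocycle relation gives $\mu(\gamma^{-1})=-(-1)^{\rho(\gamma)}\mu(\gamma)$. Hence $\langle\nu,\chi(z)\rangle=0$ whenever $\langle\rho,\chi(z)\rangle=1$. Now pick $z\in Y^{reg}(\R)\setminus(Z\cup W)$ and two nearby points~$z_\pm$ on opposite sides of~$Y(\R)$: since $\langle\rho,\eta\rangle=\rho(\delta)=1$, exactly one of $\langle\rho,\chi(z_\pm)\rangle$ equals~$1$, and one deduces $\langle\nu,\xi(z)\rangle=0$. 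Condition~(B) then kills all the $\xi_j$-components of~$\nu$; the $\eta$- and $\zeta_j$-components were already zero by your local computations. So $\nu=0$, a contradiction. None of these steps---the cocycle relation, the subgroup structure, the passage to~$\Z_2$, or the $\chi(z)$/$\xi(z)$ bookkeeping exploiting $\rho(\delta)=1$---is present in your proposal, and the argument does not go through without them.
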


\begin{remark}
In conditions~(iv) and~(v) in this lemma, the $1$-form~$\theta$ is regarded as a multi-valued analytic $1$-form on~$\FX$.
\end{remark}

\begin{proof}[Proof of Lemma~\ref{lem_tech}]
The proof is by induction on~$k$. The basis of induction for $k=-1$ is obvious. Assume that the assertion of the lemma is true for  $1$-forms in $\hCF^1_{k-1}$, and prove it for a $1$-form $\theta\in\hCF^1_k$. 

The group $H=H_1(\FX;\Z_2)$ is generated by the homology classes of circuits around the irreducible components of~$X$. Hence $H$ is finite. Therefore,
by Lemma~\ref{lem_decompose}, the $1$-form~$\theta$ is the sum of the $1$-forms $\theta_{\rho}$ given by~\eqref{eq_Phi_rho}, and $\Var_{\gamma}^{\rho}\theta_{\rho}\in\hCF^1_{k-1}$ for all $\gamma\in\pi$. Since $\theta$ satisfies conditions~(i), (ii), (iv), and~(v), we easily see that  the $1$-forms~$\theta_{\rho}$ also satisfy the same conditions~(i), (ii), (iv), and~(v). To prove that $\I(\theta)$ is totally real on~$U$ it is sufficient to prove that $\I(\theta_{\rho})$ are totally real on~$U$ for all~$\rho$. Let us fix a~$\rho$. If $\theta_{\rho}\in\hCF^1_{k-1}$, then $\I(\theta_{\rho})$ is totally real on~$U$ by the inductive assumption. So we assume that $\theta_{\rho}\notin\hCF^1_{k-1}$.

 Our goal is to show that the function~$\Psi=\I(\theta_{\rho})$ is real on~$p^{-1}(U)$. Since the $1$-form $d\Psi=\theta_{\rho}$ is totally real on~$U$, it is sufficient to prove that every connected component of~$p^{-1}(U)$ contains a point at which the value of~$\Psi$ is real. Hence it is sufficient to prove that the values $\Psi(T_{\gamma}\tz^*)$ are real for all $\gamma\in\pi$. Consider the mapping $\mu\colon\pi\to\R$ given by
\begin{equation*}
\mu(\gamma)=
\Im\Psi(T_{\gamma}\tz^*)=\Im\int_{\tz^*}^{T_{\gamma}\tz^*}\theta_{\rho}\,.
\end{equation*}
(The integral is independent of the path, since $\tFX$ is simply connected and $\theta_{\rho}$ is closed.)
Then we need to show that $\mu(\gamma)=0$ for all $\gamma\in\pi$. 

Let $z_0$ be a point in condition~(A). Since $z_0$ is a regular point of~$\partial U$, we can choose a non-self-intersecting path~$\alpha$ from~$z^*$ to~$z_0$ that is contained in~$U$, except for its endpoint~$z_0$. Let $\delta$ be a loop in~$\FX$ that starts at~$z^*$, goes along~$\alpha$ to a point~$z$ close to~$z_0$, goes once around~$Y$ in the positive direction near~$z_0$, and then returns to~$z^*$ along~$\alpha^{-1}$. Obviously, the homotopy class of~$\delta$ is independent of~$z$  and of the chosen circuit around~$Y$ provided that they are close enough to~$z_0$. So the homotopy class $\delta\in\pi$ is well defined. (Nevertheless, the homotopy class $\delta$ may depend on the path~$\alpha$, which is supposed to be fixed.)

Since any branch of~$\theta_{\rho}$  near~$z_0$ has  form~\eqref{eq_omega}, we obtain that  $M_{\delta}\theta_{\rho}=-\theta_{\rho}$. If $\rho(\delta)=0$, then  we would have $\Var_{\delta}^{\rho}\theta_{\rho}=-2\theta_{\rho}$, which would yield a contradiction, since $\Var_{\delta}^{\rho}\theta_{\rho}\in\hCF^1_{k-1}$ and $\theta_{\rho}\notin\hCF_{k-1}^1$. Therefore $\rho(\delta)=1$.

 \begin{lem}\label{slem_mu_delta}
 $\mu(\delta)=0$. 
 \end{lem}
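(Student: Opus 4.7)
The plan is to compute $\int_{\tz^*}^{T_\delta \tz^*}\theta_\rho$ directly using the specific geometry of $\delta$. I would pick a point $z$ on $\alpha$ close to $z_0$ and present $\delta$ as the concatenation $\alpha_z\cdot\eta_z\cdot\alpha_z^{-1}$, where $\alpha_z$ is the sub-arc of $\alpha$ from $z^*$ to $z$, and $\eta_z$ is a small loop based at $z$ that encircles $Y$ once in the positive direction. Lifting these pieces to $\tFX$ we obtain $\int_\delta\theta_\rho = I_1+I_2+I_3$, where each $I_j$ is the integral of $\theta_\rho$ along the corresponding lifted arc, taken in the branch continued from the previous one.

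The first piece $I_1=\int_{\alpha_z}\theta_\rho$ in the original branch is real, since $\alpha_z\subset U$ and $\theta_\rho$ is totally real on $U$ by condition~(i). For the third piece, the branch of $\theta_\rho$ used along the return arc is the continuation of the original branch after traversing $\eta_z$. Because $M_\delta\theta_\rho=-\theta_\rho$ (already established in the paragraph preceding the sublemma via the local form~\eqref{eq_omega}), this continuation is exactly the negative of the original branch of $\theta_\rho$ throughout $\alpha_z^{-1}$, so $I_3=-\int_{\alpha_z^{-1}}\theta_\rho=\int_{\alpha_z}\theta_\rho=I_1$.

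For the circuit piece $I_2$ I would use condition~(iv). Since $z_0\in Y^{reg}(\R)$ and $df(z_0)\ne 0$, I can choose local holomorphic coordinates $z_1,\ldots,z_m$ centered at $z_0$ with $f=z_1$, so that $\theta_\rho=\omega/\sqrt{z_1}$ locally with $\omega$ holomorphic. Taking $\eta_z$ to be the circle $|z_1|=\varepsilon$ in the $z_1$-plane with the remaining coordinates fixed and parametrizing $z_1=\varepsilon e^{i\phi}$, the integral reduces to $\int_0^{2\pi} a(\varepsilon e^{i\phi},\ldots)\, i\sqrt{\varepsilon}\,e^{i\phi/2}\,d\phi$ for some holomorphic function $a$; in particular $|I_2|=O(\sqrt{\varepsilon})\to 0$ as $z\to z_0$.

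Combining these, $\int_\delta\theta_\rho=2I_1+I_2$ for every admissible $z$. The left-hand side depends only on the homotopy class of $\delta$ and is independent of the choice of $z$, so letting $z\to z_0$ and using that $1/\sqrt{f}$ is integrable near $Y$ (so that $I_1$ converges to a real improper integral along the full path~$\alpha$), we obtain $\int_\delta\theta_\rho=2\lim_{z\to z_0}I_1\in\R$. Hence $\mu(\delta)=\Im\int_\delta\theta_\rho=0$. The only mildly delicate step is the $O(\sqrt{\varepsilon})$ estimate for $I_2$, but this is immediate from the explicit local model $dz_1/\sqrt{z_1}$; everything else is just a bookkeeping of how the branch of $\theta_\rho$ propagates around the three legs of $\delta$.
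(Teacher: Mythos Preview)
Your argument is correct and is essentially the same as the paper's, only more explicit. The paper writes in one line
\[
\int_{\tz^*}^{T_\delta\tz^*}\theta_\rho=\int_{\tilde\delta}\theta_\rho=2\int_{\tilde\alpha}\theta_\rho,
\]
appealing directly to the local form~\eqref{eq_omega} and to total reality on~$U$; your decomposition $I_1+I_2+I_3$ with the $O(\sqrt{\varepsilon})$ estimate for the circuit and the sign flip on the return leg is exactly the unpacking of that equality.
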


\begin{proof}
Since any branch of~$\theta_{\rho}$  near~$z_0$ has  form~\eqref{eq_omega}, we have
$$
\int_{\tz^*}^{T_{\delta}\tz^*}\theta_{\rho}=\int_{\tilde\delta}\theta_{\rho}=2\int_{\tilde\alpha}\theta_{\rho},
$$
which is real, since $\theta_{\rho}$ is totally real on~$U$. Therefore $\mu(\delta)=0$.
\end{proof}

\begin{lem}\label{slem_mu_ah}
For all\/ $\gamma_1,\gamma_2\in\pi,$ we have
\begin{equation*}
\mu(\gamma_1\gamma_2)=\mu(\gamma_1)+(-1)^{\rho(\gamma_1)}\mu(\gamma_2).
\end{equation*}
\end{lem}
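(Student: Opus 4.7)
The plan is to reduce $\mu(\gamma_1\gamma_2)$ to the pair $\mu(\gamma_1),\mu(\gamma_2)$ plus a correction term that lies in the filtration step below, and then kill the correction term using the inductive hypothesis of Lemma~\ref{lem_tech}. Recall that $T_{\gamma_1\gamma_2}=T_{\gamma_1}T_{\gamma_2}$. Splitting the integral at $T_{\gamma_1}\tz^*$ and changing variables in the second piece via $T_{\gamma_1}$ (which transforms $\theta_\rho$ into $T_{\gamma_1}^{\,*}\theta_\rho = M_{\gamma_1}\theta_\rho$), and then writing $M_{\gamma_1}\theta_\rho=(-1)^{\rho(\gamma_1)}\theta_\rho+\Var_{\gamma_1}^{\rho}\theta_\rho$, I obtain
\begin{align*}
\int_{\tz^*}^{T_{\gamma_1\gamma_2}\tz^*}\theta_\rho
&=\int_{\tz^*}^{T_{\gamma_1}\tz^*}\theta_\rho+\int_{\tz^*}^{T_{\gamma_2}\tz^*}M_{\gamma_1}\theta_\rho\\
&=\int_{\tz^*}^{T_{\gamma_1}\tz^*}\theta_\rho+(-1)^{\rho(\gamma_1)}\int_{\tz^*}^{T_{\gamma_2}\tz^*}\theta_\rho+\int_{\tz^*}^{T_{\gamma_2}\tz^*}\Var_{\gamma_1}^{\rho}\theta_\rho.
\end{align*}
Taking imaginary parts reduces the lemma to showing
\[
\Im\int_{\tz^*}^{T_{\gamma_2}\tz^*}\Var_{\gamma_1}^{\rho}\theta_\rho=0,
\]
i.e.\ that $\I(\eta)$ takes a real value at the point $T_{\gamma_2}\tz^*\in p^{-1}(U)$, where $\eta:=\Var_{\gamma_1}^{\rho}\theta_\rho$.

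To secure this, I apply the induction hypothesis of Lemma~\ref{lem_tech} to the form $\eta$. By the very conclusion of Lemma~\ref{lem_decompose} that produced $\theta_\rho$, we have $\eta\in\hCF_{k-1}^{1}$, so condition (iii) holds one level lower. Closedness (ii) is automatic since $d$ commutes with $M_{\gamma_1}$ and with scalar multiplication. Total realness (i) of $\eta$ on~$U$ follows because $p\circ T_{\gamma_1}=p$, hence $M_{\gamma_1}$ preserves the class of forms totally real on~$U$, and the real combination $M_{\gamma_1}\theta_\rho-(-1)^{\rho(\gamma_1)}\theta_\rho$ is then also totally real on~$U$. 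Granted that $\eta$ also satisfies (iv) and (v) (verified in the next paragraph), the inductive hypothesis gives that $\I(\eta)$ is totally real on $U$, and since $T_{\gamma_2}\tz^*$ lies over $z^*\in U$, we conclude $\I(\eta)(T_{\gamma_2}\tz^*)\in\R$, as required.

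The delicate part, and the main obstacle, is the verification that the local conditions (iv) and (v) are inherited by $\eta$. For (iv), near any $z_0\in Y^{reg}\setminus(Z\cup W)$ every branch of the multi-valued form $\theta$, and therefore every branch of the linear combination $\theta_\rho$, is of the shape $\omega/\sqrt{f}$ with $\omega$ locally holomorphic; applying $M_{\gamma_1}$ gives another such branch, so $\eta$ is again of this shape. For (v), near $z_0\in Z_j^{reg}$ the leading term $ic\,dg_j/g_j$ is \emph{common} to every branch of $\theta$: the form $dg_j/g_j$ is single-valued and closed on $\FX$, hence its coefficient is a monodromy invariant of $\theta$, so also of $\theta_\rho$. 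Consequently both $\theta_\rho$ and $M_{\gamma_1}\theta_\rho$ have the same singular part $ic_\rho\,dg_j/g_j$ with $c_\rho\in\R$, and $\eta$ has singular part $\bigl(1-(-1)^{\rho(\gamma_1)}\bigr)ic_\rho\,dg_j/g_j$, whose coefficient is still real. With (iv) and (v) confirmed, the induction closes and the claimed cocycle identity follows.
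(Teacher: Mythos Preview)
Your proof is correct and follows exactly the paper's route: split the integral at $T_{\gamma_1}\tz^*$, pull back by $T_{\gamma_1}$ to turn the second piece into $\int_{\tz^*}^{T_{\gamma_2}\tz^*}M_{\gamma_1}\theta_\rho$, decompose $M_{\gamma_1}\theta_\rho=(-1)^{\rho(\gamma_1)}\theta_\rho+\Var_{\gamma_1}^\rho\theta_\rho$, and then apply the inductive hypothesis of Lemma~\ref{lem_tech} to $\eta=\Var_{\gamma_1}^\rho\theta_\rho$ to conclude that $\I(\eta)$ is totally real on~$U$.

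One small correction to your verification of~(v): the coefficient $c$ in the local expansion $\theta_\rho=ic\,dg_j/g_j+\omega$ is \emph{not} forced to be the same for every branch---condition~(v) only asserts that it is real for each branch separately, and your argument that ``$dg_j/g_j$ is single-valued, hence its coefficient is a monodromy invariant'' does not hold, since monodromy along paths that wind around other components of~$X$ can change the branch of~$\theta_\rho$ near~$z_0$ and with it the residue. Fortunately equality is unnecessary: any branch of $\eta$ near~$z_0$ is a real linear combination of two branches of~$\theta_\rho$, each of the form $ic'\,dg_j/g_j+\omega'$ with $c'\in\R$, so the resulting coefficient is again $i$ times a real number and (v) holds for~$\eta$. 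With this adjustment the argument is complete and matches the paper's.
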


\begin{proof}
We have,
\begin{multline*}
\mu(\gamma_1\gamma_2)=\Im\int_{\tz^*}^{T_{\gamma_1\gamma_2}\tz^*}\theta_{\rho}=\mu(\gamma_1)+\Im\int_{T_{\gamma_1}\tz^*}^{T_{\gamma_1\gamma_2}\tz^*}\theta_{\rho}=\mu(\gamma_1)+\Im\int_{\tz^*}^{T_{\gamma_2}\tz^*}M_{\gamma_1}\theta_{\rho}\\
{}=\mu(\gamma_1)+(-1)^{\rho(\gamma_1)}\mu(\gamma_2)+\Im\int_{\tz^*}^{T_{\gamma_2}\tz^*}\Var_{\gamma_1}^{\rho}\theta_{\rho}.
\end{multline*}
Properties~(i), (ii), (iv), and~(v) for the $1$-form~$\theta_{\rho}$ immediately imply the same properties for
the $1$-form~$\Var_{\gamma_1}^{\rho}\theta_{\rho}$. Besides, $\Var_{\gamma_1}^{\rho}\theta_{\rho}\in\hCF_{k-1}^1$. Hence, by the inductive assumption, we conclude that the function~$\I(\Var_{\gamma_1}^{\rho}\theta_{\rho})$ is totally real on~$U$. Therefore, the value 
$$\I(\Var_{\gamma_1}^{\rho}\theta_{\rho})(T_{\gamma_2}\tz^*)=\int_{\tz^*}^{T_{\gamma_2}\tz^*}\Var_{\gamma_1}^{\rho}\theta_{\rho}$$ is real, which implies the lemma.
\end{proof}

\begin{lem}\label{lem_image_mu}
The image of the mapping~$\mu$ is a finitely generated subgroup of\/~$\R$.
\end{lem}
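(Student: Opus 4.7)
My plan is to show that the image of $\mu$ actually coincides with $G_0 := \mu(\ker\rho)$, which is an honest subgroup of $\R$, and then that $G_0$ is finitely generated as a homomorphic image of a finitely generated group. The key observation is that the cocycle identity of Lemma~\ref{slem_mu_ah} degenerates to an ordinary homomorphism on the index-at-most-two subgroup $\ker\rho \subset \pi$, and that Lemma~\ref{slem_mu_delta}, together with the already established equality $\rho(\delta)=1$, provides a distinguished element that lets one pull every value of $\mu$ back into $G_0$.

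To carry out the first step, I restrict $\mu$ to $\ker\rho$. For $\gamma_1 \in \ker\rho$ we have $(-1)^{\rho(\gamma_1)} = 1$, so Lemma~\ref{slem_mu_ah} simplifies to $\mu(\gamma_1\gamma_2) = \mu(\gamma_1) + \mu(\gamma_2)$ for all $\gamma_2 \in \pi$; in particular $\mu|_{\ker\rho}$ is an ordinary group homomorphism, and $G_0$ is an additive subgroup of $\R$. For the second step, let $\gamma \in \pi$ with $\rho(\gamma) = 1$. Then $\rho(\delta^{-1}\gamma) = 0$, so $\delta^{-1}\gamma \in \ker\rho$, and combining Lemmas~\ref{slem_mu_ah} and~\ref{slem_mu_delta} yields
$$
\mu(\gamma) = \mu(\delta) + (-1)^{\rho(\delta)}\mu(\delta^{-1}\gamma) = -\mu(\delta^{-1}\gamma) \in G_0.
$$
Together with the obvious inclusion $\mu(\ker\rho) \subseteq G_0$, this gives $\mu(\pi) = G_0$, confirming that the image of $\mu$ is a subgroup of $\R$.

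For finite generation, I would invoke that $\FX = \C^m \setminus X$ is a smooth affine algebraic variety, hence a Stein manifold of the homotopy type of a finite CW complex (Andreotti--Frankel); in particular, $\pi = \pi_1(\FX, z^*)$ is finitely generated. (For the specific case of a hypersurface complement this is in any case classical, with $\pi$ generated by meridians around the irreducible components of $X$.) The subgroup $\ker\rho$ has index at most $2$ in $\pi$, so it too is finitely generated, and its image $G_0$ under the homomorphism $\mu|_{\ker\rho}$ is then automatically a finitely generated subgroup of $\R$. The only mildly subtle point in the plan is recognizing that Lemma~\ref{slem_mu_delta} is precisely what is needed to convert the a priori non-group-theoretic image of a crossed homomorphism into a genuine subgroup; once this reduction is made, the finite generation is entirely standard.
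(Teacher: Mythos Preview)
Your proof is correct and uses the same ingredients as the paper: the cocycle identity of Lemma~\ref{slem_mu_ah}, the element~$\delta$ with $\mu(\delta)=0$ and $\rho(\delta)=1$, and finite generation of~$\pi$. The paper argues directly that the image~$A$ is closed under both $a_1+a_2$ and $a_1-a_2$ (inserting~$\delta$ between $\gamma_1$ and $\gamma_2$ to flip the sign), whereas you first restrict to~$\ker\rho$ to get an honest homomorphism and then use~$\delta$ to show $\mu(\pi)=\mu(\ker\rho)$; your packaging has the mild advantage that the finite-generation step becomes the trivial observation that a homomorphic image of the finitely generated group~$\ker\rho$ is finitely generated, while the paper's one-line ``hence $A$ is finitely generated'' implicitly relies on expanding the cocycle relation over words in a generating set.
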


\begin{proof}
Let $A$ be the image of~$\mu$.  We need to show that, for any $a_1,a_2\in A$, both $a_1+a_2$ and $a_1-a_2$  belong to~$A$. If $a_1,a_2\in A$, then $a_1=\mu(\gamma_1)$, $a_2=\mu(\gamma_2)$ for some $\gamma_1,\gamma_2\in\pi$. Hence the number $\mu(\gamma_1\gamma_2)=a_1+(-1)^{\rho(\gamma_1)}a_2$ belongs to~$A$. On the other hand, we have constructed an element $\delta\in \pi$ such that $\mu(\delta)=0$ and $\rho(\delta)=1$. Hence the number $\mu(\gamma_1\delta\gamma_2)=a_1-(-1)^{\rho(\gamma_1)}a_2$ also belongs to~$A$.  Therefore $A$ is a subgroup of~$\R$. But the group~$\pi$ is finitely generated, since it is the fundamental group of the complement of a complex affine variety. Hence the group~$A$ is also finitely generated.
\end{proof}

Assume that $A\ne 0$.  Any non-trivial finitely generated subgroup of~$\R$ is a free Abelian group, hence, has an epimorphism onto~$\Z_2$. Choose an arbitrary epimorphism $\varpi\colon A\to\Z_2$, and consider the mapping
$$
\nu\colon \pi\xrightarrow{\mu}A\xrightarrow{\varpi}\Z_2.
$$
This mapping is surjective, since both $\mu$ and~$\varpi$ are surjective. Lemma~\ref{slem_mu_ah} implies that $\nu$ is a homomorphism, i.\,e., $\nu\in H^*=\Hom(\pi,\Z_2)$. 

Let $\eta,\zeta_1,\ldots,\zeta_q,\xi_1,\ldots,\xi_r\in H=H_1(\FX;\Z_2)$ be the the homology classes of small circuits around the irreducible components~$Y,Z_1,\ldots,Z_q,W_1,\ldots,W_r$ of~$X$, respectively. It is well known that these elements form a basis of~$H$. For a point $z\in \R^m\setminus W(\R)$, we put $\xi(z)=\sum_{j=1}^r\kappa_j(z)\xi_j$. Also, for any point $z\in \R^m\setminus X(\R)$, we denote by~$\sigma(z)$, $\tau_1(z),\ldots,\tau_q(z)$  the modulo~$2$ linking numbers of~$\{z^*,z\}$ with $Y(\R)$, $Z_1(\R),\ldots,Z_q(\R)$, respectively, and we put 
$$\eta(z)=\sigma(z)\eta,\qquad \zeta(z)=\sum_{j=1}^q\tau_j(z)\zeta_j,\qquad 
\chi(z)=\eta(z)+\zeta(z)+\xi(z).$$

\begin{lem}\label{slem_nu_zeta}
We have $\langle\nu,\zeta_j\rangle=0$ for $j=1,\ldots,q$.
\end{lem}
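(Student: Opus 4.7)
The plan is to exhibit a concrete loop $\epsilon_j$ representing $\zeta_j$ in $H_1(\FX;\Z_2)$ and to prove $\mu(\epsilon_j)=0$ directly; then $\langle\nu,\zeta_j\rangle=\nu(\epsilon_j)=\varpi(\mu(\epsilon_j))=0$ automatically.

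First I would pick a point $z_0\in Z_j^{reg}$ that lies on no other irreducible component of $X$ (which exists since the excluded loci are proper analytic subvarieties of $Z_j$), choose a path $\beta$ in $\FX$ from $z^*$ to a point $z$ close to $z_0$, and take $\sigma_j$ to be a small loop at $z$ encircling $Z_j$ positively once, contained in a small polydisk neighborhood $N$ of $z_0$ disjoint from the other irreducible components of $X$. Setting $\epsilon_j=\beta\sigma_j\beta^{-1}$ gives a loop at $z^*$ with $[\epsilon_j]=\zeta_j$ in $H$.

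The main technical observation --- and the heart of the argument --- is that condition~(v), which $\theta_\rho$ inherits from $\theta$ (as noted in the proof of Lemma~\ref{lem_tech}), forces $\theta_\rho$ to be single-valued on $N\setminus Z_j$. Indeed, any branch on $N$ has the form $\theta_\rho=\frac{ic\,dg_j}{g_j}+\omega$ with $c\in\R$ and $\omega$ holomorphic on $N$; the meromorphic piece $dg_j/g_j$ is single-valued on $\C^m\setminus Z_j$, and $\omega$ is already single-valued on $N$, so the whole local expression has trivial monodromy around $\sigma_j$. Consequently $M_{\epsilon_j}\theta_\rho=\theta_\rho$, i.e., the deck transformation $T_{\epsilon_j}$ preserves $\theta_\rho$ on $\tilde\FX$.

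Using this invariance I would then compute the integral. Set $\tilde z=T_\beta\tilde z^*$ and let $\tilde z_1$ denote the endpoint of the lift of $\sigma_j$ starting at $\tilde z$; tracking homotopy classes of lifted paths gives $\tilde z_1=T_{\epsilon_j}\tilde z$. By the change of variables corresponding to $T_{\epsilon_j}^*\theta_\rho=\theta_\rho$ one has $\int_{T_{\epsilon_j}\tilde z}^{T_{\epsilon_j}\tilde z^*}\theta_\rho=-\int_{\tilde z^*}^{\tilde z}\theta_\rho$, so the three-piece decomposition
\begin{equation*}
\int_{\tilde z^*}^{T_{\epsilon_j}\tilde z^*}\theta_\rho=\int_{\tilde z^*}^{\tilde z}\theta_\rho+\int_{\tilde z}^{\tilde z_1}\theta_\rho+\int_{T_{\epsilon_j}\tilde z}^{T_{\epsilon_j}\tilde z^*}\theta_\rho
\end{equation*}
collapses to $\int_{\tilde z}^{\tilde z_1}\theta_\rho=\int_{\sigma_j}\bigl(\tfrac{ic\,dg_j}{g_j}+\omega\bigr)=ic\cdot 2\pi i+0=-2\pi c\in\R$. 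Here $\int_{\sigma_j}\omega=0$ by Stokes: $\omega$ is closed since $d\omega=d\theta_\rho=0$ by~(ii), and $\sigma_j$ bounds a small transverse disk in~$N$. Thus $\mu(\epsilon_j)=\Im(-2\pi c)=0$, and the lemma follows.

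The main obstacle will be justifying the local single-valuedness cleanly --- in particular, making precise that condition~(v) (imaginary residue plus holomorphic remainder) really does pin down a trivial monodromy of $\theta_\rho$ around $\zeta_j$ --- so that the pairing $\langle\nu,\zeta_j\rangle$ reduces to a single residue computation with a purely imaginary residue, whose contribution is manifestly real.
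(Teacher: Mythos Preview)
Your proof is correct and follows essentially the same approach as the paper: construct a loop $\epsilon_j=\beta\sigma_j\beta^{-1}$ representing~$\zeta_j$, use condition~(v) to see that the monodromy of~$\theta_\rho$ along~$\epsilon_j$ is trivial, and then compute $\int_{\tilde z^*}^{T_{\epsilon_j}\tilde z^*}\theta_\rho=\int_{\tilde\sigma_j}\theta_\rho=-2\pi c\in\R$, whence $\mu(\epsilon_j)=0$. Your write-up is a bit more explicit about the three-piece decomposition and the vanishing of $\int_{\sigma_j}\omega$, but the argument is the same.
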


\begin{proof}
Let $z_0$ be a regular point of~$Z_j$ that does not belong to the union of all other irreducible components of~$X$. Consider a loop~$\gamma$ in~$\FX$  that goes from $z^*$ to a point $z\in \FX$ close to~$z_0$ along some path~$\alpha$,  travels  along a small circuit~$\beta$ around~$Z_j$ in the positive direction, and then returns to~$z^*$ along~$\alpha^{-1}$. Then $[\gamma]=\zeta_j$. By condition~(v) for~$\theta_{\rho}$, any branch of~$\theta_{\rho}$ is meromorphic at~$z_0$. Hence $M_{\gamma}\theta_{\rho}=\theta_{\rho}$. Therefore,
\begin{equation}\label{eq_2pic}
\Psi(T_{\gamma}\tilde z^*)=\int_{\tilde\gamma}\theta_{\rho}=\int_{\tilde\beta}\theta_{\rho}=-2\pi c,
\end{equation}
where $\tilde\beta$ is the lift of~$\beta$ starting at the end of~$\tilde\alpha$, and $c$ is the real constant in~\eqref{eq_omega2} for the branch of~$\theta_{\rho}$ realized on~$\tilde\beta$.
Consequently $\mu(\gamma)=0$, hence, $\langle\nu,\zeta_j\rangle=0$.
\end{proof}

\begin{lem}\label{slem_nu_chi}
Suppose that $z\in\R^m\setminus X(\R)$ and $\langle\rho,\chi(z)\rangle=1$. Then $\langle\nu,\chi(z)\rangle=0$.
\end{lem}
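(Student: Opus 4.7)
The strategy is to realize $\chi(z)$ by an explicit loop $\gamma_z\in\pi$ and then extract the desired mod $2$ identity by evaluating $\mu(\gamma_z)$ via complex conjugation, invoking the hypothesis $\langle\rho,\chi(z)\rangle=1$ in an essential way. I would proceed in three stages.

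First I would construct $\gamma_z$. Choose a smooth path $\alpha\colon[0,1]\to\R^m$ from $z^*$ to $z$ that is transverse to each of $Y(\R)$, $Z_j(\R)$, $W_j(\R)$ and avoids their singular points and mutual intersections. Then $\alpha$ meets $X(\R)$ in finitely many smooth points $p_1,\dots,p_N$, with $p_i\in V_i^{reg}$ for a unique irreducible component $V_i$ of $X$. Push $\alpha$ slightly into the complex domain by going on the ``upper'' (respectively ``lower'') side of each $p_i$ to get paths $\alpha_\pm$ in $\FX$, and set $\gamma_z=\alpha_+\cdot\alpha_-^{-1}$. A standard transversality computation shows that $[\gamma_z]=\chi(z)$ in $H$, and that $\gamma_z$ is freely homotopic in $\FX$ to a concatenation $\eta_1\cdots\eta_N$, where each $\eta_i=\beta_ic_i\beta_i^{-1}$ follows $\alpha$ to near $p_i$, runs around a small positively oriented circuit $c_i$ encircling $V_i$, and returns.

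Next I would compute $\nu(\gamma_z)$ in two independent ways. On the one hand, since $\nu\in H^*$ (because $\nu$ is a homomorphism and $\nu(\gamma^2)=0$ in $\Z_2$), and since $[c_i]$ equals $\eta$, $\zeta_j$, or $\xi_j$ according to $V_i\in\{Y,Z_j,W_j\}$, additivity of $\nu$ and Lemmas~\ref{slem_mu_delta} and~\ref{slem_nu_zeta} give
\[
\nu(\gamma_z)=\sum_{i=1}^N\langle\nu,[c_i]\rangle
=\sigma(z)\langle\nu,\eta\rangle+\sum_{j=1}^q\tau_j(z)\langle\nu,\zeta_j\rangle+\sum_{j=1}^r\kappa_j(z)\langle\nu,\xi_j\rangle=\langle\nu,\chi(z)\rangle.
\]
On the other hand, I would evaluate $\mu(\gamma_z)=\Im\int_{\tilde{\gamma_z}}\theta_{\rho}$ by complex conjugation. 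Let $\sigma\colon\C^m\to\C^m$ be complex conjugation and $\tilde\sigma\colon\tFX\to\tFX$ its canonical lift fixing $\tz^*$. Because $\tilde\sigma$ acts as the identity on the component of $p^{-1}(U)$ containing $\tz^*$ and $\theta_{\rho}$ is totally real on $U$ by hypothesis (i), analytic continuation yields the global identity $\tilde\sigma^*\theta_{\rho}=\overline{\theta_{\rho}}$. Since $\alpha_-=\sigma\circ\alpha_+$, the lift of $\alpha_-$ from $\tz^*$ is $\tilde\sigma\circ\tilde{\alpha_+}$; combined with the identification $T_{\gamma_z}\tilde\sigma=\tilde\sigma T_{\gamma_z^{-1}}$, one rewrites $\int_{\tilde{\gamma_z}}\theta_{\rho}$ as $\int_{\tilde{\alpha_+}}\theta_{\rho}-\overline{\int_{\tilde{\alpha_+}}M_{\gamma_z^{-1}}\theta_{\rho}}$.

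Finally I would use the hypothesis $\rho(\gamma_z)=1$ to finish. By Lemma~\ref{lem_decompose} applied to $\theta_\rho$, the hypothesis gives $M_{\gamma_z^{-1}}\theta_{\rho}=-\theta_{\rho}+\vartheta$ with $\vartheta\in\hCF^1_{k-1}$ satisfying conditions (i), (ii), (iv), (v); by the inductive hypothesis of Lemma~\ref{lem_tech}, $\I(\vartheta)$ is totally real on $U$ and the contribution of $\vartheta$ to the integral is real. Hence $\Im\int_{\tilde{\gamma_z}}\theta_{\rho}=2\Im\int_{\tilde{\alpha_+}}\theta_{\rho}$ modulo a real correction, and a further decomposition of $\tilde{\alpha_+}$ at its crossings into sub-paths that each either stay in a lift of $U$ or traverse a loop whose image under $\mu$ can be identified with an element of $A$, places $\mu(\gamma_z)$ in $2A$. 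Therefore $\nu(\gamma_z)=\varpi(\mu(\gamma_z))=0$, which combined with the display above yields $\langle\nu,\chi(z)\rangle=0$.

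The chief technical obstacle is the last paragraph: promoting the naïve ``reality of complex-conjugate integrals'' to the $2$-divisibility statement $\mu(\gamma_z)\in 2A$. This requires a careful bookkeeping of how the deck action interacts with $\tilde\sigma$ near each crossing~$p_i$, and in particular the handling of the $W_j$-crossings where no local model of $\theta_{\rho}$ is supplied by hypotheses (iv) or (v); here one must rely entirely on the unipotence supplied by (iii) and the inductive assumption on $\hCF^1_{k-1}$.
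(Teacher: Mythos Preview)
Your loop $\gamma_z$ and the conjugation identity are exactly right, and they are the same as the paper's. The gap is in your third stage: the claim that a decomposition of $\tilde{\alpha}_+$ at its crossings ``places $\mu(\gamma_z)$ in $2A$'' is not justified, and as you note yourself, the $W_j$-crossings have no local model for $\theta_\rho$ to exploit. More concretely, when you write that ``the contribution of $\vartheta$ to the integral is real'' by the inductive hypothesis, you are implicitly using that the endpoint of $\tilde{\alpha}_+$ lies in $p^{-1}(U)$; but it does not, since $z$ lies in some other component of $\R^m\setminus X(\R)$. The inductive hypothesis only controls $\I(\vartheta)$ on $p^{-1}(U)$, so that step fails.

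The clean finish avoids all of this and uses a lemma you have already cited. From $\bar\gamma_z=\gamma_z^{-1}$ and the global identity $\tilde\sigma^*\theta_\rho=\overline{\theta_\rho}$ (which you correctly derived from total reality on~$U$), you get
\[
\int_{\tz^*}^{T_{\gamma_z^{-1}}\tz^*}\theta_\rho=\overline{\int_{\tz^*}^{T_{\gamma_z}\tz^*}\theta_\rho},
\]
hence $\mu(\gamma_z^{-1})=-\mu(\gamma_z)$. Now simply apply Lemma~\ref{slem_mu_ah} with $\gamma_1=\gamma_z$, $\gamma_2=\gamma_z^{-1}$: since $\mu(1)=0$, it gives $\mu(\gamma_z^{-1})=-(-1)^{\rho(\gamma_z)}\mu(\gamma_z)$. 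The hypothesis $\langle\rho,\chi(z)\rangle=1$ says $\rho(\gamma_z)=1$, so this becomes $\mu(\gamma_z^{-1})=\mu(\gamma_z)$. Combining the two equalities yields $\mu(\gamma_z)=0$ outright, hence $\nu(\gamma_z)=0$ and $\langle\nu,\chi(z)\rangle=0$. No $2$-divisibility statement, no bookkeeping at $W_j$-crossings, and no expansion of $M_{\gamma_z^{-1}}\theta_\rho$ is needed; Lemma~\ref{slem_mu_ah} has already packaged the inductive input for you.
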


\begin{proof}
Choose an arbitrary path $\beta$ in~$\R^m$ with endpoints $z^*$ and~$z$ that intersects~$X(\R)$  transversely in finitely many regular points of it. Near every intersection point of $\beta$ and~$X(\R)$, we replace a small segment of~$\beta$ with a small positive half-circuit around~$X$ in~$\C^m$, see Fig.~\ref{fig_beta}. The obtained path from~$z^*$ to~$z$  in $\FX$ will be denoted by~$\beta_1$. Let $\gamma$ be the loop in~ $\FX$ with endpoints at~$z^*$ obtained by the concatenation of~$\beta_1$  and~$\bar\beta^{-1}_1$, where  $\bar\beta_1^{-1}$ is the path obtained from~$\beta_1$ by the coordinatewise conjugation traversed in the opposite direction. It is easy to see that $[\gamma]=\chi(z)$, since the incomes to~$[\gamma]$ of intersection points of $\beta$ with the components $Y(\R)$, $Z_1(\R),\ldots,Z_q(\R)$, $W_1(\R),\ldots,W_r(\R)$  of~$X(\R)$ are equal to $\eta,\zeta_1,\ldots,\zeta_q,\xi_1,\ldots,\xi_r$, respectively.
Obviously, the path~$\bar\gamma$ coordinatewise conjugate to~$\gamma$ coincides with~$\gamma^{-1}$.  Since the $1$-form~$\theta_{\rho}$ is real on~$p^{-1}(U)$, we see that the integrals of~$\theta_{\rho}$ along the lifts~$\tilde\gamma$ and $\Tilde{\bar\gamma}$ of~$\gamma$ and~$\bar\gamma$, respectively, starting at~$\tz^*$ are conjugate to each other, i.\,e.,
\begin{equation}\label{eq_conjugate}
\int_{\tz^*}^{T_{\gamma^{-1}}\tz^*}\theta_{\rho}=\int_{\tz^*}^{T_{\bar\gamma}\tz^*}\theta_{\rho}=\overline{\int_{\tz^*}^{T_{\gamma}\tz^*}\theta_{\rho}\,}. 
\end{equation}
Hence $\mu(\gamma^{-1})=-\mu(\gamma)$. On the other hand, by Lemma~\ref{slem_mu_ah}, we obtain that $$\mu(\gamma^{-1})=-(-1)^{\rho(\gamma)}\mu(\gamma)=-(-1)^{\langle\rho,\chi(z)\rangle}\mu(\gamma)=\mu(\gamma).$$ Therefore $\mu(\gamma)=0$. Thus $\langle\nu,\chi(z)\rangle=0$.
\end{proof}

\begin{figure}
\begin{center}
\unitlength=1.3cm

\begin{picture}(9,3.75)

\thinlines

\put(0,0){\line(1,0){7}}
\put(2,3){\line(1,0){7}}
\put(0,0){\line(2,3){2}}
\put(7,0){\line(2,3){2}}

\put(2,1){\circle*{.08}}
\put(7.5,2){\circle*{.08}}
\put(1.7,1){$z^*$}
\put(7.58,1.95){$z$}
\put(3,2.1){$\beta_1$}
\put(3.27,1.45){$\bar\beta_1^{-1}$}

\thicklines

\qbezier(2,1)(3.5,2.5)(5,1.5)
\qbezier(7.5,2)(6.2,.7)(5,1.5)

{\color{white}%
\put(2.4,1.36){\circle*{.424}}
\put(3,1.72){\circle*{.424}}
\put(5,1.5){\circle*{.424}}
\put(6.83,1.48){\circle*{.424}}}

\put(2.4,1.36){\circle{.424}}
\put(3,1.72){\circle{.424}}
\put(5,1.5){\circle{.424}}
\put(6.83,1.48){\circle{.424}}

\thinlines

\qbezier(2,1)(3.5,2.5)(5,1.5)
\qbezier(7.5,2)(6.2,.7)(5,1.5)

\thicklines

\put(2.4,1.36){%
\begin{picture}(0,0)

\qbezier(-.1,.2)(-.165,.17)(-.23,.18)
\qbezier(-.1,.2)(-.13,.135)(-.12,.07)

{\color{white}%
\put(.209,0.055){\circle*{.045}}
\put(.209,-0.055){\circle*{.045}}
\put(.152,-0.152){\circle*{.045}}
\put(.055,-0.209){\circle*{.045}}
\put(-.055,-0.209){\circle*{.045}}
\put(-.138,-0.164){\circle*{.04}}
\put(-.140,-0.166){\circle*{.04}}
\put(-.136,-0.162){\circle*{.04}}
}
\qbezier(.1,-.2)(.165,-.17)(.23,-.18)
\qbezier(.1,-.2)(.13,-.135)(.12,-.07)

\end{picture}%
}

\put(3,1.72){%
\begin{picture}(0,0)

\qbezier(-.1,.2)(-.165,.17)(-.23,.18)
\qbezier(-.1,.2)(-.13,.135)(-.12,.07)

{\color{white}%
\put(.214,0){\circle*{.045}}
\put(.171,-0.127){\circle*{.045}}
\put(.055,-0.209){\circle*{.045}}
\put(-.055,-0.209){\circle*{.045}}
\put(-.152,-0.152){\circle*{.045}}
}
\qbezier(.1,-.2)(.165,-.17)(.23,-.18)
\qbezier(.1,-.2)(.13,-.135)(.12,-.07)

\end{picture}%
}

\put(5,1.5){%
\begin{picture}(0,0)

\qbezier(.2,.1)(.17,.165)(.18,.23)
\qbezier(.2,.1)(.135,.13)(.07,.12)

{\color{white}%
\put(-.209,0.055){\circle*{.045}}
\put(-.152,-0.152){\circle*{.045}}
\put(-.055,-0.209){\circle*{.045}}
\put(.055,-0.209){\circle*{.045}}
\put(.152,-0.152){\circle*{.045}}
\put(-.209,-0.055){\circle*{.045}}
}
\qbezier(-.2,-.1)(-.17,-.165)(-.18,-.23)
\qbezier(-.2,-.1)(-.135,-.13)(-.07,-.12)

\end{picture}%
}

\put(6.83,1.48){%
\begin{picture}(0,0)

\qbezier(-.1,.2)(-.165,.17)(-.23,.18)
\qbezier(-.1,.2)(-.13,.135)(-.12,.07)

{\color{white}%
\put(.212,0.037){\circle*{.045}}
\put(.191,-0.107){\circle*{.045}}
\put(.055,-0.209){\circle*{.045}}
\put(-.055,-0.209){\circle*{.045}}
\put(-.152,-0.152){\circle*{.045}}
}
\qbezier(.1,-.2)(.165,-.17)(.23,-.18)
\qbezier(.1,-.2)(.13,-.135)(.12,-.07)

\end{picture}%
}

\thinlines

\qbezier(2.44,1.65)(2.52,2.58)(2.85,1.98)
\qbezier(2.43,1.5)(2.32,.57)(2,.3)
\qbezier(2.92,1.83)(3.5,.03)(7,.3)
\put(5.09,1.77){\line(1,3){.3}}
\put(5.05,1.65){\line(-1,-3){.5}}
\qbezier(6.92,1.75)(7.22,2.65)(8,2.8)
\qbezier(6.88,1.63)(6.48,.43)(6,.2)

\put(2.41,1.37){\circle*{.05}}
\put(2.97,1.71){\circle*{.05}}
\put(5,1.5){\circle*{.05}}
\put(6.83,1.47){\circle*{.05}}

\put(3.4,3.5){components of~$X(\R)$}

\put(3.45,3.4){\line(-2,-3){.7}}
\put(5,3.4){\line(1,-5){.19}}
\put(6.3,3.4){\line(3,-2){1.1}}

\put(.5,.2){\large$\R^m$}

\end{picture}
\end{center}
\caption{The loop~$\gamma$ in Lemma~\ref{slem_nu_chi}}\label{fig_beta}
\end{figure}
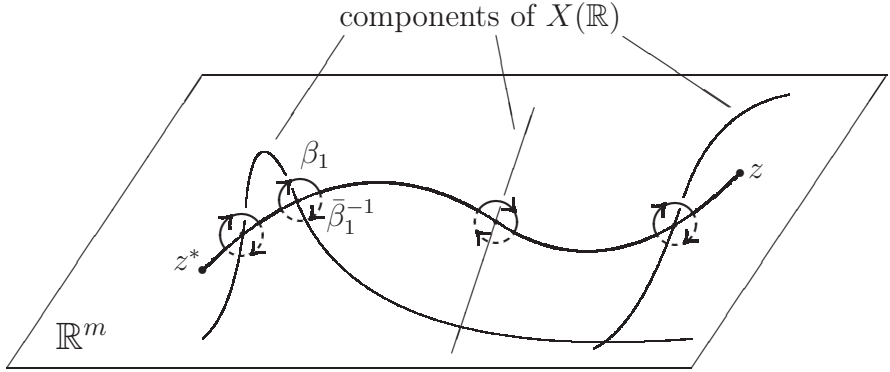

\begin{lem}\label{cor_nu_xi}
Suppose that $z\in Y^{reg}(\R)$ and $z\notin Z\cup W$. Then $\langle\nu,\xi(z)\rangle=0$.
\end{lem}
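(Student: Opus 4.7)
The plan is to reduce the claim to Lemma~\ref{slem_nu_chi} by perturbing $z$ off the real hypersurface $X(\R)$ in two opposite ways, exploiting that $Y$, $Z$, and $W$ have been treated separately by the preceding lemmas.

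First I would pick, in a small neighborhood of $z$, two points $z_1,z_2\in\R^m\setminus X(\R)$ lying on opposite sides of the smooth real hypersurface $Y^{reg}(\R)$. This is possible because $z\in Y^{reg}(\R)$ and $z\notin Z\cup W$, so locally $X(\R)$ coincides with the smooth codimension-one hypersurface $Y(\R)$. By continuity of the polynomials $g_j$ and $h_l$ near $z$, the numbers $\tau_j(z_i)$ and $\kappa_l(z_i)$ are the same for $i=1,2$; call them $\tau_j$ and $\kappa_l=\kappa_l(z)$. On the other hand $\sigma(z_1)$ and $\sigma(z_2)$ differ by $1$ in $\Z_2$, since the two points sit on opposite sides of $Y(\R)$. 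Writing $\chi(z_i)=\sigma(z_i)\eta+\sum_j\tau_j\zeta_j+\xi(z)$, we therefore obtain $\chi(z_1)+\chi(z_2)=\eta$ in $H$.

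Next I would use the element $\delta\in\pi$ constructed before Lemma~\ref{slem_mu_delta}: its homology class is $[\delta]=\eta$, and it was shown that $\rho(\delta)=1$ and $\mu(\delta)=0$. The first equality gives $\langle\rho,\eta\rangle=1$, and so from $\chi(z_1)+\chi(z_2)=\eta$ we get $\langle\rho,\chi(z_1)\rangle+\langle\rho,\chi(z_2)\rangle=1$. Thus, after relabeling if necessary, we may assume $\langle\rho,\chi(z_1)\rangle=1$, so Lemma~\ref{slem_nu_chi} applies and yields $\langle\nu,\chi(z_1)\rangle=0$.

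Finally, expanding this linear relation gives
\begin{equation*}
\sigma(z_1)\langle\nu,\eta\rangle+\sum_{j=1}^q\tau_j\langle\nu,\zeta_j\rangle+\langle\nu,\xi(z)\rangle=0.
\end{equation*}
The second equality gives $\mu(\delta)=0$ implies $\nu(\delta)=\varpi(\mu(\delta))=0$, i.e. $\langle\nu,\eta\rangle=0$; and Lemma~\ref{slem_nu_zeta} gives $\langle\nu,\zeta_j\rangle=0$ for each $j$. Hence $\langle\nu,\xi(z)\rangle=0$, as desired. I do not anticipate a real obstacle here; the only point that requires a bit of care is to verify that the two perturbations $z_1,z_2$ can be chosen inside $\R^m\setminus X(\R)$ with the claimed linking-number behavior, which follows at once from the fact that $z$ is a smooth point of $Y(\R)$ lying away from the other components of $X(\R)$.
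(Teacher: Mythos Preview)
Your proof is correct and follows essentially the same approach as the paper: perturb $z$ to two nearby points $z_\pm$ on opposite sides of $Y(\R)$, use $\langle\rho,\eta\rangle=\rho(\delta)=1$ to ensure one of $\langle\rho,\chi(z_\pm)\rangle$ equals~$1$, apply Lemma~\ref{slem_nu_chi} to that point, and then strip off the $\eta$ and $\zeta_j$ contributions using $\langle\nu,\eta\rangle=\nu(\delta)=0$ (Lemma~\ref{slem_mu_delta}) and $\langle\nu,\zeta_j\rangle=0$ (Lemma~\ref{slem_nu_zeta}). The only cosmetic difference is that you phrase the key relation as $\chi(z_1)+\chi(z_2)=\eta$ in $H$, whereas the paper writes it as $\chi(z_+)=\chi(z_-)+\eta$.
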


\begin{proof}
In a neighborhood of~$z$, the subvariety~$Y(\R)\subset\R^m$ is a smooth hypersurface. Choose two points $z_{\pm}\in \R^m\setminus X(\R)$ close to~$z$ and lying on the different sides of~$Y(\R)$. Then $\zeta(z_{\pm})=\zeta(z)$, $\xi(z_{\pm})=\xi(z)$, and $\eta(z_{+})= \eta(z_-)+\eta$. Hence $\chi(z_+)=\chi(z_-)+\eta$. Since $\langle\rho,\eta\rangle=\rho(\delta)=1$, we obtain that $\langle\rho,\chi(z_+)\rangle=\langle\rho,\chi(z_-)\rangle+1$. Therefore, exactly one of the two values $\langle\rho,\chi(z_{\pm})\rangle$ is equal to~$1$. We may assume that $\langle\rho,\chi(z_{+})\rangle=1$. Then, by Lemma~\ref{slem_nu_chi}, $\langle\nu,\chi(z_{+})\rangle=0$. By Lemmas~\ref{slem_mu_delta} and~\ref{slem_nu_zeta}, we have $\langle\nu,\eta\rangle=\nu(\delta)=0$ and $\langle\nu,\zeta_j\rangle=0$ for all~$j$. Hence $\langle\nu,\xi(z)\rangle=\langle\nu,\xi(z_+)\rangle=0$.
\end{proof}

By condition~(B), the vectors $\xi(z)$, where $z$ runs over all points in~$Y^{reg}(\R)$ that do not belong to~$Z\cup W,$ generate the subgroup of~$H$ spanned by $\xi_1,\ldots,\xi_r$. Hence, by Lemma~\ref{cor_nu_xi}, we obtain that $\langle\nu,\xi_j\rangle=0$ for $j=1,\ldots,r$. But, by Lemmas~\ref{slem_mu_delta} and~\ref{slem_nu_zeta}, $\langle\nu,\eta\rangle=0$ and $\langle\nu,\zeta_j\rangle=0$ for $j=1,\ldots,q$. Therefore, $\nu=0$, which is impossible since the homomorphism $\nu\colon\pi\to\Z_2$ must be surjective. This contradiction proves that $A=0$, i.\,e., $\mu(\gamma)=0$ for all $\gamma\in\pi$. Therefore $\I(\theta_{\rho})$ is totally real on~$U$, which completes the proof of Lemma~\ref{lem_tech}.  
\end{proof}

In the sequel, Lemma~\ref{lem_tech} will be used in the inductive proof of Theorem~\ref{theorem_key2}. To prove Theorem~\ref{theorem_key}, we shall need a slightly more complicated lemma. To formulate it, we need to introduce several concepts and some notation. We denote by $\tU$ the connected component of~$p^{-1}(U)$  that contains  the base point~$\tz^*$. Recall that each function~$\Psi$ holomorphic on~$\tFX$ can be considered as a multi-valued analytic function on~$\FX$. Suppose that~$\Psi$ has trivial variations along all loops~$\gamma$ contained in~$U$. Then, by definition, the \textit{principal branch\/} of~$\Psi$ on~$U$ is the branch realized on the connected component~$\tU$ of~$p^{-1}(U)$, i.\,e., the function~$\psi$ on~$U$ such that $\psi(p(\tz))=\Psi(\tz)$ for all $\tz\in\tU$.  The definition of the principal branch of a $1$-form holomorphic on~$\tFX$ with trivial variations along all loops~$\gamma$ contained in~$U$ is completely similar. We shall say that a function $\Psi\in\CO(\tFX)$ satisfies \textit{zero boundary conditions\/} on~$\tU$ if its principal branch~$\psi(z)$ has zero limit as a point~$z$ approaches to any point $z_0\in\partial U$ from values in~$U$.

\begin{lem}\label{lem_tech2}
Assume that  the $4$-tuple $(Y,Z,W,U)$ satisfies conditions~\textnormal{(A)} and~\textnormal{(B)} in Lemma~\ref{lem_tech}. Let  $\theta\in\A^1(\tFX)$ be a $1$-form satisfying conditions~\textnormal{(ii), (iii),} and\/~\textnormal{(iv)} in Lemma~\ref{lem_tech}. Suppose that the following conditions are satisfied instead of conditions~\textnormal{(i)} and~\textnormal{(v):} 
\begin{itemize}
\item[\textnormal{(i${}'$)}] The principal branch of~$\theta$ on~$U$ is well defined and real. In other words, $\Var_{\gamma}\theta=0$ for all loops~$\gamma$ contained in~$U,$ and~$\theta$ is real on~$\tU.$
\item[\textnormal{(i${}''$)}] The $1$-forms $\varphi_{\gamma}=i\bigl(M_{\gamma}\theta-(-1)^{\lk(\gamma,Y)}\theta\bigr)$ are totally real on~$U$ for all $\gamma\in\pi.$ 
\item[\textnormal{(v${}'$)}] In a neighborhood of each point $z_0\in Z_j$ that does not belong to the union of all other irreducible components of~$X$, any branch of\/~$\theta$ has the form
\begin{equation}\label{eq_omega3}
\theta=\frac{c\,dg_j(z)}{g_j(z)}+\omega,
\end{equation}
where $c$ is a real constant and\/ $\omega$ is a $1$-form holomorphic at~$z_0$.
\end{itemize}
Let $\Psi\in\CO(\tFX)$ be a function such that $d\Psi=\theta,$ \ $\Var_{\gamma}\Psi=0$ for all loops~$\gamma$ contained in~$U,$ and $\Psi$ satisfies zero boundary conditions on~$\tU$. Then the functions
$$\Phi_{\gamma}=i\bigl(M_{\gamma}\Psi-(-1)^{\lk(\gamma,Y)}\Psi\bigr)$$
are totally real on~$U$ for all $\gamma\in\pi$.
\end{lem}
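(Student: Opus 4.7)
The plan is to reduce the statement to Lemma~\ref{lem_tech} applied to the $1$-form $\varphi_\gamma = d\Phi_\gamma$, and then pin down the resulting integration constant by an argument modeled on the proof of Lemma~\ref{lem_tech} itself, using the zero boundary condition on~$\Psi$.

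First I will verify that $\varphi_\gamma$ satisfies all five hypotheses of Lemma~\ref{lem_tech}. Condition~(i) is exactly~(i''); condition~(ii) follows from closedness of~$\theta$ and from $d\circ M_\gamma = M_\gamma\circ d$; condition~(iii) is immediate from the $M_\gamma$-invariance of $\hCF^1_k$ established in Section~\ref{section_fh}. For~(iv), I will use that near a regular point of $Y\setminus(Z\cup W)$ the product $\sqrt{f}\cdot\theta$ is locally single-valued (the $-1$-monodromies of~$\sqrt{f}$ and~$\theta$ cancel), so all branches of $\theta$ there share a common holomorphic numerator~$\omega$; consequently $\varphi_\gamma = i\bigl(M_\gamma\omega - (-1)^{\lk(\gamma,Y)}\omega\bigr)/\sqrt{f}$ has the required form. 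For~(v), since $dg_j/g_j$ is single-valued the residue constant~$c$ in~\eqref{eq_omega3} is preserved by~$M_\gamma$, so $\varphi_\gamma = ic\bigl(1-(-1)^{\lk(\gamma,Y)}\bigr)\,dg_j/g_j + (\text{holomorphic})$, matching form~\eqref{eq_omega2} with a real constant.

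By Lemma~\ref{lem_tech}, $\I(\varphi_\gamma)$ is totally real on~$U$, and $\Phi_\gamma = \I(\varphi_\gamma) + c_\gamma$ where $c_\gamma = \Phi_\gamma(\tz^*) = iC_\gamma$ with $C_\gamma := \Psi(T_\gamma\tz^*) - (-1)^{\lk(\gamma,Y)}\Psi(\tz^*)$. Since $\Psi(\tz^*)\in\R$ (the principal branch of~$\Psi$ on~$U$ has real differential by~(i$'$) and limits to~$0$ at~$\partial U$ by the zero boundary condition), it suffices to prove that $\mu(\gamma) := \Re C_\gamma$ vanishes identically on~$\pi$. A direct computation using $M_{\gamma_1}\Psi = (-1)^{\lk(\gamma_1,Y)}\Psi - i\Phi_{\gamma_1}$ gives
\begin{equation*}
C_{\gamma_1\gamma_2} = (-1)^{\lk(\gamma_1,Y)}C_{\gamma_2} - i\Phi_{\gamma_1}(T_{\gamma_2}\tz^*),
\end{equation*}
and since $\I(\varphi_{\gamma_1})(T_{\gamma_2}\tz^*)\in\R$ by total reality, taking real parts produces the twisted cocycle relation
\begin{equation*}
\mu(\gamma_1\gamma_2) = \mu(\gamma_1) + (-1)^{\lk(\gamma_1,Y)}\mu(\gamma_2).
\end{equation*}

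The main obstacle is to prove $\mu\equiv 0$; I do this by mimicking the scheme of the proof of Lemma~\ref{lem_tech}. Assume for contradiction that $A := \mu(\pi)$ is nonzero, fix an epimorphism $\varpi\colon A\to\Z_2$, and set $\nu := \varpi\circ\mu$; because the twist collapses modulo~$2$, $\nu\in H^* = \Hom(H,\Z_2)$ is a surjective homomorphism, so it suffices to show that $\nu$ vanishes on each generator $\eta,\zeta_j,\xi_k$ of~$H$. Using the local $\omega/\sqrt{f}$ structure and the zero boundary condition at the point~$z_0$ of~(A), the computation of Lemma~\ref{slem_mu_delta} adapts verbatim to the full form~$\theta$ and gives $\Psi(T_\delta\tz^*) = -\Psi(\tz^*)$, hence $C_\delta = 0$ and $\nu(\eta) = 0$. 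Condition~(v$'$) and the residue theorem yield $C_{\zeta_j} = 2\pi ic\in i\R$, so $\nu(\zeta_j) = 0$. For $\nu(\xi_k)$ I invoke the Schwarz reflection principle: since the principal branches of~$\theta$ and~$\Psi$ on~$U$ are real-valued, complex conjugation lifts to an involution of~$\tFX$ fixing~$\tz^*$ under which $\Psi$ transforms to~$\overline\Psi$, giving $C_{\bar\gamma} = \overline{C_\gamma}$ and hence $\mu(\bar\gamma) = \mu(\gamma)$. Applied to the conjugation-antisymmetric loops $\gamma_z = \beta_1\bar\beta_1^{-1}$ of Lemma~\ref{slem_nu_chi} with $\sigma(z) = 0$ (so that $\lk(\gamma_z,Y) = 0$ and $\bar\gamma_z = \gamma_z^{-1}$), the cocycle relation forces $\mu(\gamma_z) = 0$ and therefore $\nu(\chi(z)) = 0$; combined with $\nu(\zeta_j) = 0$ this reduces to $\sum_k\kappa_k(z)\nu(\xi_k) = 0$, and condition~(B) then gives $\nu(\xi_k) = 0$ for each~$k$, completing the contradiction.
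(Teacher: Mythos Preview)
Your argument follows essentially the same route as the paper's: verify that each $\varphi_\gamma$ satisfies the hypotheses of Lemma~\ref{lem_tech}, conclude that $\I(\varphi_\gamma)$ is totally real, reduce to showing the constants $\Phi_\gamma(\tz^*)$ are real by studying $\mu(\gamma)=\Re C_\gamma$, establish the twisted cocycle relation from the total reality of $\I(\varphi_{\gamma_1})$, and obtain a contradiction from an induced nonzero $\nu\in H^*$ by killing it on the generators $\eta,\zeta_j,\xi_k$ via the zero boundary condition, condition~(v$'$), and the conjugation symmetry respectively.

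Two small corrections to your verification of conditions~(iv) and~(v) for~$\varphi_\gamma$. The claim that near $z_0\in Y\setminus(Z\cup W)$ ``all branches of $\theta$ share a common holomorphic numerator~$\omega$'' is not what~(iv) asserts, and the claim that ``the residue constant~$c$ in~\eqref{eq_omega3} is preserved by~$M_\gamma$'' is false in general: different global branches of~$\theta$ may carry different holomorphic numerators and different real residue constants. Neither error is fatal. For~(iv), since any branch of $M_\gamma\theta$ is itself some branch of~$\theta$, both $M_\gamma\theta$ and~$\theta$ individually have the form $(\text{holomorphic})/\sqrt{f}$ near~$z_0$, hence so does~$\varphi_\gamma$. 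For~(v), if the two branches carry constants $c,c'\in\R$, then the residue of~$\varphi_\gamma$ along~$Z_j$ is $i\bigl(c'-(-1)^{\lk(\gamma,Y)}c\bigr)$, which is purely imaginary and therefore matches the form~\eqref{eq_omega2}. With these fixes (and with $\mu(\delta)=0$ established \emph{before} you invoke that $A=\mu(\pi)$ is a subgroup, since that step uses an element with $\rho=1$ and $\mu=0$), your proof is correct and coincides with the paper's.
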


\begin{proof}
Condition~(v${}'$) for~$\theta$ immediately implies that all $1$-forms~$\varphi_{\gamma}$ satisfy condition~(v) in Lemma~\ref{lem_tech}. All other conditions of Lemma~\ref{lem_tech} for~$\varphi_{\gamma}$ also follow from the corresponding conditions for~$\theta$. Hence, applying Lemma~\ref{lem_tech} to the $1$-forms~$\varphi_{\gamma}$, we may conclude that all functions~$\I(\varphi_{\gamma})$ are totally real on~$U$. Nevertheless, the functions~$\Phi_{\gamma}$, which we are interested in, do not coincide with the functions~$\I(\varphi_{\gamma})$ but differ from them by constants. Namely,
\begin{equation*}
\Phi_{\gamma}(\tz)=\I(\varphi_{\gamma})(\tz)+\Phi_{\gamma}(\tz^*).
\end{equation*}
Therefore, we still need to prove that the values~$\Phi_{\gamma}(\tz^*)$ are real for all $\gamma\in\pi$. This proof follows the same line as the proof of Lemma~\ref{lem_tech}, though differs in many details. To stress the similarity with the proof of Lemma~\ref{lem_tech}, we shall use the same notation for the objects that play similar roles in these two proofs. 

Consider the homomorphism $\rho\in \Hom(\pi,\Z_2)=H^*$ given by
$$
\rho(\gamma)=\lk(\gamma,Y)\mod 2.
$$
Then $\varphi_{\gamma}=i\Var_{\gamma}^{\rho}\theta$ and $\Phi_{\gamma}=i\Var_{\gamma}^{\rho}\Psi$. Let~$\delta$ be the same loop as in the proof of Lemma~\ref{lem_tech}. Then $\lk(\delta,Y)=1$, hence, $\rho(\delta)=1$. As in the proof of Lemma~\ref{lem_tech}, condition~(iv) implies that $M_{\delta}\theta=-\theta$.

Consider the mapping $\mu\colon\pi\to\R$ given by
\begin{equation*}
\mu(\gamma)=\Im \Phi_{\gamma}(\tz^*)=\Re\Psi(T_{\gamma}\tz^*)-(-1)^{\rho(\gamma)}\Re\Psi(\tz^*).
\end{equation*}
Our aim is to prove that $\mu(\gamma)=0$ for all~$\gamma$. 

\begin{lem}\label{slem_mu_delta2}
 $\mu(\delta)=0$. 
\end{lem}
\begin{proof}
Let $\alpha\colon[0,1]\to\C^m$ be the path from~$z^*$ to a point $z_0\in\partial U$ that was used in the construction of the loop~$\delta$, see the proof of Lemma~\ref{lem_tech}. Since  $\alpha$ is contained in~$U$ except for its endpoint~$z_0$, the zero boundary conditions for~$\Psi$ on~$\tU$ imply that $\lim_{t\to 1}\Psi(\tilde\alpha(t))=0$. But $d\Psi=\theta$. Hence,
\begin{align*}
\Psi(\tz^*)&=-\int_{\talpha}\theta,\\
\Psi(T_{\delta}\tz^*)&=-\int_{T_{\delta}\talpha}\theta=-\int_{\talpha}M_{\delta}\theta=\int_{\talpha}\theta.
\end{align*}
Therefore,
$$
\Phi_{\delta}(\tz^*)=i(\Psi(T_{\delta}\tz^*)+\Psi(\tz^*))=0.
$$
Thus, $\mu(\delta)=0$.
\end{proof}

\begin{lem}\label{slem_mu_ah2}
For all\/ $\gamma_1,\gamma_2\in\pi,$ we have
\begin{equation}\label{eq_mu_sum2}
\mu(\gamma_1\gamma_2)=\mu(\gamma_1)+(-1)^{\rho(\gamma_1)}\mu(\gamma_2).
\end{equation}
\end{lem}

\begin{proof}
As it was shown above, the functions~$\I(\varphi_{\gamma})$ are totally real on~$U$ for all $\gamma\in\pi$. In particular, the value $\I(\varphi_{\gamma_1})(T_{\gamma_2}\tz^*)$ is real. We have
\begin{multline*}
\I(\varphi_{\gamma_1})(T_{\gamma_2}\tz^*)=\Phi_{\gamma_1}(T_{\gamma_2}\tz^*)-\Phi_{\gamma_1}(\tz^*)={}\\{}=i\bigl(\Psi(T_{\gamma_1\gamma_2}\tz^*)-(-1)^{\rho(\gamma_1)}\Psi(T_{\gamma_2}\tz^*)
-\Psi(T_{\gamma_1}\tz^*)+(-1)^{\rho(\gamma_1)}\Psi(\tz^*)
\bigr).
\end{multline*}
Equating the imaginary part of this number with zero, we obtain exactly equality~\eqref{eq_mu_sum2}.
\end{proof}

In the same way as Lemmas~\ref{slem_mu_delta} and~\ref{slem_mu_ah} implied Lemma~\ref{lem_image_mu}, Lemmas~\ref{slem_mu_delta2} and~\ref{slem_mu_ah2} imply that the image of~$\mu$ is a finitely generated subgroup $A\subset\R$.  Assume that $A\ne 0$, choose an arbitrary epimorphism $\varpi\colon A\to\Z_2$, and consider the surjective mapping
$$
\nu\colon\pi\xrightarrow{\mu}A\xrightarrow{\varpi}\Z_2.
$$
It follows from Lemma~\ref{slem_mu_ah2}  that $\nu$ is a homomorphism, i.\,e., $\nu\in H^*$.

\begin{lem}\label{slem_nu_zeta2}
We have $\langle\nu,\zeta_j\rangle=0$ for $j=1,\ldots,q$.
\end{lem}
\begin{proof}
In the proof of Lemma~\ref{slem_nu_zeta} we deduced equality~\eqref{eq_2pic} from condition~(v). In the same way condition~(v${}'$) implies the equality 
\begin{equation*}
\Psi(T_{\gamma}\tilde z^*)-\Psi(\tz^*)=2\pi i c,
\end{equation*}
where the loop $\gamma$ passing around~$Z_j$ is the same as in the proof of Lemma~\ref{slem_nu_zeta}. Since $\rho(\gamma)=0$, it follows that $\mu(\gamma)=0$, hence, $\langle\nu,\zeta_j\rangle=0$. 
\end{proof}

\begin{lem}\label{slem_nu_chi2}
Suppose that $z\in\R^m\setminus X(\R)$ and $\langle\rho,\chi(z)\rangle=0$. Then $\langle\nu,\chi(z)\rangle=0$.
\end{lem}

\begin{proof}
Construct a loop $\gamma$ in the same way as in the proof of Lemma~\ref{slem_nu_chi}. Recall that $[\gamma]=\chi(z)$ and~$\gamma^{-1}=\bar\gamma$. Since $\rho(\gamma)=0$, we have
$$
\mu(\gamma)=\Re\Psi(T_{\gamma}\tz^*)-\Re\Psi(\tz^*)=\Re\int_{\tilde\gamma}\theta.
$$
Similarly,
$$
\mu(\gamma^{-1})=\mu(\bar\gamma)=\Re\int_{\Tilde{\bar\gamma}}\theta.
$$
 Since the $1$-form~$\theta$ is real on~$\tU$, we see that the integrals of~$\theta$ along the lifts~$\tilde\gamma$ and $\Tilde{\bar\gamma}$ of~$\gamma$ and~$\bar\gamma$, respectively, starting at~$\tz^*$ are conjugate to each other. Hence,  $\mu(\gamma^{-1})=\mu(\gamma)$. On the other hand, it follows from Lemma~\ref{slem_mu_ah2} that $\mu(\gamma^{-1})=-\mu(\gamma)$. Therefore, $\mu(\gamma)=0$. Thus, $\langle\nu,\chi(z)\rangle=0$.
\end{proof}

\begin{lem}\label{cor_nu_xi2}
Suppose that $z\in Y^{reg}(\R)$ and $z\notin Z\cup W$. Then $\langle\nu,\xi(z)\rangle=0$.
\end{lem}

\begin{proof}
As in the proof of Lemma~\ref{cor_nu_xi}, take two points $z_{\pm}\in \R^m\setminus X(\R)$ close to~$z$ and lying on the different sides of~$Y(\R)$. Then  $\langle\rho,\chi(z_+)\rangle=\langle\rho,\chi(z_-)\rangle+1$. Therefore, exactly one of the two values $\langle\rho,\chi(z_{\pm})\rangle$ is equal to~$0$. We may assume that $\langle\rho,\chi(z_{-})\rangle=0$. Then, by Lemma~\ref{slem_nu_chi2}, $\langle\nu,\chi(z_{-})\rangle=0$. By Lemmas~\ref{slem_mu_delta2} and~\ref{slem_nu_zeta2}, we have $\langle\nu,\eta\rangle=0$ and $\langle\nu,\zeta_j\rangle=0$ for all~$j$. Hence $\langle\nu,\xi(z)\rangle=\langle\nu,\xi(z_-)\rangle=0$.
\end{proof}

As in the proof of Lemma~\ref{lem_tech}, Lemmas~\ref{slem_mu_delta2}, \ref{slem_nu_zeta2}, and~\ref{cor_nu_xi2}, and condition~(B) imply that $\nu=0$, which is impossible, since the homomorphism $\nu\colon\pi\to\Z_2$ must be surjective. This contradiction yields that  $A=0$, i.\,e., $\mu(\gamma)=0$ for all $\gamma\in\pi$, which completes the proof of Lemma~\ref{lem_tech2}.  
\end{proof}

\section{The analytic continuation of volume}\label{section_continue}

 As in the Introduction, consider the affine space $\CG_{(n)}(\C)\cong \C^{n(n+1)/2}$ of symmetric complex matrices of size $(n+1)\times(n+1)$ with units on the diagonal, the affine hypersurface $X\subset \CG_{(n)}(\C)$ consisting of all matrices~$C$ such that at least one of the principal minors of~$C$ vanishes, and  the domains $\CC_{\bS^n}$ and~$\CC_{\Lambda^n}$ in $\CG_{(n)}(\R)\subset\CG_{(n)}(\C)$ consisting of the Gram matrices of vertices of all non-degenerate simplices in~$\bS^n\subset\R^{n+1}$ and of all non-degenerate bounded simplices in~$\Lambda^n\subset\R^{1,n}$, respectively. 
  
Let $\X^n$ be either~$\bS^n$ or~$\Lambda^n$. We consider the complex analytic manifold $\FX= \CG_{(n)}(\C)\setminus X$, and take for the base point $C^*\in\FX$ the point corresponding to the regular simplex in~$\X^n$ with edge~$1$; then $C^*\in\CC_{\X^n}$.  We shall use  all notation introduced in Section~\ref{section_fh}. However, the base points for~$\tFX$ and~$\hFX$ will be denoted by $\tC^*$ and $\hC^*$ rather than by~$\tz^*$ and~$\hz^*$, respectively.

We introduce the parameter~$\varepsilon$ that is equal to~$1$ for $\X^n=\bS^n$ and to~$-1$ for $\X^n=\Lambda^n$.

\begin{remark}
The base points~$C^*$  in the cases $\X^n=\bS^n$ and $\X^n=\Lambda^n$ are different from each other. Hence, the universal coverings~$\tFX$ of~$\FX$ in these two cases  are isomorphic to each other, but the isomorphism is not canonical. The same is true for~$\hFX$. We shall always remember that~$\tFX$ and~$\hFX$ depend on $\X^n$, but we shall not indicate this explicitly in notation. 
\end{remark}

Let $\varphi$ be a function holomorphic at~$C^*$. Assume that $\varphi$ admits the analytic continuation along every path in~$\FX$. Then the analytic continuation of~$\varphi$ is a well-defined holomorphic function  on~$\tFX$, which we shall denote by~$\widetilde{\varphi}$. More precisely, we put $\widetilde{\varphi}(\tC)=\varphi(p(\tC))$ for $\tC$ close to~$\tC^*$, and then continue analytically the function~$\widetilde{\varphi}(\tC)$. Obviously, if $\varphi$ is holomorphic on the whole manifold~$\FX$, then $\widetilde{\varphi}$ is just the pullback of~$\varphi$ by~$p$, i.\,e., $\widetilde{\varphi}(\tC)=\varphi(p(\tC))$ for all~$\tC\in\tFX$.

As in the Introduction, we consider   the function~$V_{\X^n}(C)$ computing the volume of a simplex in~$\X^n$  from the cosines (if $\X^n=\bS^n$) or the hyperbolic cosines (if $\X^n=\Lambda^n$) of its edge lengths. This function is defined and real analytic on  the domain~$\CC_{\X^n}$. Aomoto~\cite{Aom77}  proved that the function~$V_{\X^n}$ admits the analytic continuation to the multi-valued analytic function on~$\FX$, and the monodromy of this function on~$\hFX$ is unipotent. In our terminology, his result is as follows.

\begin{propos}\label{propos_fh}
The analytic continuation of\/~$V_{\X^n}$ is a well-defined holomorphic function $\tV_{\X^n}\in\CO(\tFX)$. Besides, $d\tV_{\X^n}\in\hCF^1_{\lceil\frac{n}{2}\rceil-1}$ and\/ $\tV_{\X^n}\in\hCF^0_{\lceil\frac{n}{2}\rceil}$.
\end{propos}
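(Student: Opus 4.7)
My plan is to prove Proposition~\ref{propos_fh} by induction on~$n$, following Aomoto's argument and using Schl\"afli's formula~\eqref{eq_Schlaefli} as the recursion. The inductive hypothesis carries both claims simultaneously: for every $k<n$, the function $V_{\X^k}$ admits analytic continuation along every path in $\CG_{(k)}(\C)\setminus X$ to a holomorphic function $\tV_{\X^k}$ lying in $\hCF^0_{\lceil k/2\rceil}$ with differential in $\hCF^1_{\lceil k/2\rceil-1}$. Because Schl\"afli's formula relates dimension~$n$ to dimension~$n-2$, the induction step jumps by~$2$, so the base cases $n=1,2$ are handled directly: for $n=1$, $V_{\X^1}(C)=\Arccos c_{01}$ or $\Arcosh c_{01}$, whose analytic continuation lies in $\hCF^0_1$ by inspection; for $n=2$, the Gauss--Bonnet identity expresses $KV_2$ as the sum of the three angles minus~$\pi$, each angle being of exactly the same algebraic form as in the $n=1$ case with respect to the dual Gram data, so $\tV_{\X^2}\in\hCF^0_1$ as well.

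For the induction step $n-2\rightsquigarrow n$, I analyse each term in
$$
KdV_n=\frac{1}{n-1}\sum_{F\subset\Delta^n,\,\dim F=n-2}V_{n-2}(F)\,d\alpha_F
$$
separately. A codimension-$2$ face~$F$ is determined by the two vertices $v_i,v_j$ opposite to it, and its Gram matrix~$C_F$ is the corresponding $(n-1)\times(n-1)$ principal submatrix of~$C$. Since every principal minor of~$C_F$ is also a principal minor of~$C$, the map $C\mapsto C_F$ sends $\FX$ into $\CG_{(n-2)}(\C)\setminus X$; by functoriality of the $\hpi$-unipotent filtration and the inductive hypothesis, $V_{n-2}(F)\in\hCF^0_{\lceil n/2\rceil-1}$. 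For the dihedral angle, the classical cofactor formula
$$
\cos\alpha_F=\pm\,\frac{\tilde c_{ij}}{\sqrt{\tilde c_{ii}\tilde c_{jj}}}
$$
expresses $\cos\alpha_F$ as a rational function of the entries of~$C$ with only the two principal minors $\tilde c_{ii}$ and $\tilde c_{jj}$ appearing under a square root. These minors cut out components of~$X$, so their square roots are single-valued on~$\hFX$, whence $d\alpha_F$ extends to a closed holomorphic $1$-form on~$\hFX$ and lies in $\hCF^1_0$. Lemma~\ref{lem_properties}(2) then gives $V_{n-2}(F)\,d\alpha_F\in\hCF^1_{\lceil n/2\rceil-1}$, and summing over faces yields $d\tV_{\X^n}\in\hCF^1_{\lceil n/2\rceil-1}$. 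Applying Lemma~\ref{lem_properties}(4) to this closed $1$-form produces $\tV_{\X^n}=\I(d\tV_{\X^n})+V_{\X^n}(C^*)\in\hCF^0_{\lceil n/2\rceil}$, and the existence of this path integral on the simply connected cover~$\tFX$ is precisely the well-definedness of the analytic continuation.

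The main obstacle will be the careful verification that every branching of~$d\alpha_F$ is absorbed by passing to~$\hFX$: this reduces to checking that the only polynomials appearing under the square roots in the standard cofactor expressions for the dihedral angles of a simplex in~$\X^n$ are (irreducible factors of) principal minors of~$C$, so that they define the same hypersurface~$X$ whose components generate $H_1(\FX;\Z_2)$. A secondary point is to confirm the naturality of the filtration under the pullback $C\mapsto C_F$, which is automatic since the covering $\hFX$ is functorial with respect to embeddings of complements of hypersurfaces. As an independent check of the bound $\lceil n/2\rceil$, iterating Schl\"afli's formula $\lceil n/2\rceil$ times realises~$\tV_{\X^n}$ as a homotopy-invariant iterated integral of length~$\lceil n/2\rceil$ of $1$-forms holomorphic on~$\hFX$, and the $\hpi$-version of Proposition~\ref{propos_iterate} noted at the end of Section~\ref{section_fh} then yields $\tV_{\X^n}\in\hCF^0_{\lceil n/2\rceil}$ directly.
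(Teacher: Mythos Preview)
Your proposal is correct and follows essentially the same route as the paper's proof: induction on~$n$ with step~$2$, Schl\"afli's formula for the step, the observation that $d\alpha_F$ is single-valued on~$\hFX$ because only principal minors of~$C$ appear under square roots, and then Lemma~\ref{lem_properties}(2),(4) to climb the filtration. The only imprecision is that your cofactor formula for $\cos\alpha_F$ displays only the square roots of the two $n$-element minors $D_{I'},D_{I''}$, whereas in $d\alpha_F$ itself (after dividing by $\sin\alpha_F$) the square roots that actually survive are those of $D$ and $D_I$---compare the paper's formula~\eqref{eq_d_talpha}; since you explicitly flag this verification as ``the main obstacle'' and state the correct general criterion (all radicands are principal minors), the argument goes through.
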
  

Aomoto proved this result in the following way. He used Schl\"afli's formula to write the function~$\tV_{\X^n}$ as a homotopy invariant linear combination of iterated integrals of $1$-forms holomorphic on~$\hFX$, and then concluded that the monodromy of this function on~$\hFX$ is unipotent (see Proposition~\ref{propos_iterate}). Though the formula for the volume via iterated integrals is interesting in itself, the usage of iterated integrals for the proof of Proposition~\ref{propos_fh} is superfluous. Indeed, Proposition~\ref{propos_fh} can be deduced directly from Schl\"afli's formula and the basic properties of unipotent filtrations listed in Lemma~\ref{lem_properties}. We shall give this simplified version of Aomoto's proof of  Proposition~\ref{propos_fh} below, since the details of this proof will be useful in our further study  of the function~$\tV_{\X^n}$. 

Now, let us introduce some notation.
For any subsets $I,J\subset \{0,\ldots,n\}$ such that $|I|=|J|$, we denote by~$D_{I,J}(C)$ the minor of a matrix~$C\in\CG_{(n)}(\C)$  formed by rows with numbers in~$I$ and columns with numbers in~$J$. The principal minor~$D_{I,I}(C)$ will be denoted by~$D_I(C)$. The submatrix of~$C$ formed by rows and columns with numbers in~$I$ will be denoted by~$C_I$; then $D_I(C)=\det C_I$.  
The determinant  of the matrix~$C$ will be denoted by~$D(C)$, i.\,e., $D(C)=D_{\{0,\ldots, n\}}(C)$. It is well known that the polynomials $D_I(C)$ are irreducible whenever $|I|\ge 3$, and $D_{\{j,k\}}(C)=(1+c_{jk})(1-c_{jk})$. We put $D_{\{j,k\}}^{\pm}(C)=1\pm c_{jk}$. 

For each subset $I\subset \{0,\ldots,n\}$, $|I|\ge 2$, we denote by~$\CH_I$ the affine hypersurface in~$\CG_{(n)}(\C)$ given by the equation $D_I(C)=0$, and we put $\CH=\CH_{\{0,\ldots, n\}}$.
If $|I|=2$, we denote by $\CH_I^{\pm}$ the hypersurfaces (hyperplanes) given by $D_I^{\pm}(C)=0$. Then $X$ is the union of all hypersurfaces~$\CH_I$, $I\subset \{0,\ldots,n\}$, $|I|\ge 2$. The irreducible components of~$X$ are all~$\CH_I$ such that $|I|\ge 3$ and all $\CH_I^+$ and $\CH_I^-$ such that $|I|=2$. (Obviously, no two of these components coincide to each other.) Thus the number of irreducible components of~$X$ is $s=2^{n+1}+\frac{n(n-1)}{2}-2$.

 It is a standard fact that the set~$\CC_{\bS^n}$ consists of all positive definite matrices in~$\CG_{(n)}(\R)$, and the set $\CC_{\Lambda^n}$ consists of all matrices $C\in\CG_{(n)}(\R)$ such that $(-1)^{|I|}D_I(C)<0$ for all~$I$ and $c_{jk}>1$ for all~$j$ and~$k$. The latter condition can be rewritten in the form $D_I^+(C)>0$  and $D_I^-(C)<0$ for all~$I$ such that $|I|=2$. It follows that both $\CC_{\bS^n}$ and~$\CC_{\Lambda^n}$ are connected components of $\CG_{(n)}(\R)\setminus X(\R)$. (The sets~$\CC_{\bS^n}$ and~$\CC_{\Lambda^n}$ are connected, since the spaces of non-degenerate simplices in~$\bS^n$ and in~$\Lambda^n$, respectively, are connected.)

Below we shall often use  induction on the dimension~$n$. Hence we  shall introduce  an additional index~$(n)$ indicating the dimension of the space~$\X^n$ to which an object is related, for instance, we shall write $X_{(n)}$, $\FX_{(n)}$, $\hCF_{k,(n)}^q$, etc. Nevertheless, to simplify the notation, we shall omit this index whenever it is clear. 

Let $I\subset\{0,\ldots,n\}$ be a subset of cardinality $m+1$. We denote by $\pr_I$ the projection $\CG_{(n)}(\C)\to \CG_{(m)}(\C)$ given by  $C\mapsto C_I$. Obviously, $\pr_I(\FX_{(n)})=\FX_{(m)}$, $\pr_I(\CC_{\X^n})=\CC_{\X^m}$, and $\pr_I(C^*_{(n)})=C^*_{(m)}$. The projection $\pr_I\colon\FX_{(n)}\to\FX_{(m)}$ is covered by the well-defined mappings $\hpr_I\colon \hFX_{(n)}\to\hFX_{(m)}$ and $\tpr_I\colon\tFX_{(n)}\to\tFX_{(m)}$ such that $\hpr_I(\hC^*_{(n)})=\hC^*_{(m)}$ and $\tpr_I(\tC^*_{(n)})=\tC^*_{(m)}$. Consider the pullback  homomorphism $$\tpr_I^*\colon\A^q(\tFX_{(m)})\to\A^q(\tFX_{(n)}).$$ It follows immediately from the definition of~$\hCF^q_k$ that $\tpr_I^*\bigl(\hCF^q_{k,(m)}\bigr)\subset\hCF^q_{k,(n)}$ for all $k$ and~$q$.

Now, suppose that $C\in\CC_{\X^n}$, and let $\Delta^n$ be a simplex in~$\X^n$ corresponding to~$C$. For each subset $I\subset\{0,\ldots,n\}$, we denote by $\Delta_I$ the face of~$\Delta^n$ spanned by vertices with numbers in~$I$. If $|I|=n-1$, we denote by~$\alpha_I=\alpha_I(C)$ the dihedral angle of the simplex~$\Delta^n$ at the $(n-2)$-dimensional face~$\Delta_I$.  Consider the standard vector model of~$\X^n$ in~$\V$, where $\V=\R^{n+1}$ if~$\X^n=\bS^n$ and $\V=\R^{1,n}$ if~$\X^n=\Lambda^n$. Let $\bv_0,\ldots,\bv_n\in\V$ be the vectors representing the vertices of~$\Delta^n$, and let $\bv^0,\ldots,\bv^n$ be the basis of~$\V$ dual to the basis $\bv_0,\ldots,\bv_n$. Then $\bv^0,\ldots,\bv^n$ are interior normal vectors to the $(n-1)$-dimensional faces of~$\Delta^n$ if $\X^n=\bS^n$ and exterior normal vectors to the the $(n-1)$-dimensional faces of~$\Delta^n$ if $\X^n=\Lambda^n$. Hence
$$
\cos\alpha_I(C)=-\frac{\varepsilon\langle\bv^j,\bv^k\rangle}
{\sqrt{\langle\bv^j,\bv^j\rangle\langle\bv^k,\bv^k\rangle}}\,.
$$
 Since the Gram matrix of the vectors $\bv^0,\ldots,\bv^n$ is~$C^{-1}$ and $\varepsilon^nD(C)>0$, we  obtain that 
\begin{equation}\label{eq_sin-alpha}
\cos\alpha_I(C)=\frac{\varepsilon^{n-1}D_{I',I''}(C)}{\sqrt{D_{I'}(C)D_{I''}(C)}}\,,\qquad
\sin\alpha_I(C)=\sqrt{\frac{D(C)D_I(C)}{D_{I'}(C)D_{I''}(C)}}\,,
\end{equation}
where $I'$ and $I''$ are the two $n$-element subsets of $\{0,\ldots,n\}$ containing~$I$ (cf.~\cite[(3.10)]{Aom77}). The second formula is obtained from the first one using Jacobi's identity
\begin{equation}\label{eq_Jac}
DD_I=D_{I'}D_{I''}-D_{I',I''}^2.
\end{equation}
Differentiating the first of formulae~\eqref{eq_sin-alpha}, we easily get
\begin{equation}\label{eq_d_alpha}
d\alpha_I=\frac{\varepsilon^{n-1}}{\sqrt{DD_I}}\left(
\frac{D_{I',I''}\,d(D_{I'}D_{I''})}{2D_{I'}D_{I''}}-dD_{I',I''}\right).
\end{equation}
For $C\in\CC_{\X^n}$, we have $\varepsilon^{|I|+1}D_I(C)>0$, and we put 
$$
R_I(C)=\sqrt{\varepsilon^{|I|+1}D_I(C)}\,,
$$
where the positive value of the square root is chosen. In particular, we put $R(C)=\sqrt{\varepsilon^{n}D(C)}$.

By Schl\"afli's formula~\eqref{eq_Schlaefli} for~$\Delta^n$, the following equality holds on~$\CC_{\X^n}$:
\begin{equation}\label{eq_Sch_simp}
d V_{\X^n}(C)=\frac{\varepsilon}{n-1}\sum_{I\subset\{0,\ldots,n\},\,|I|=n-1}V_{I}(C)\,d\alpha_I(C),
\end{equation}
where $V_I(C)=V_{\X^{n-2}}(C_I)$ is the $(n-2)$-dimensional volume of the face~$\Delta_I$. 

\begin{proof}[Proof of Proposition~\ref{propos_fh}]
The proof is by induction on~$n$ with step $2$. It is convenient to start the induction from the cases $n=0$ and $n=1$.

For $n=0$, we shall use the convention that  $\FX$ is a point and $V_{\X^0}=1$. Then $\tFX=\hFX=\FX$ and $\hCF_0^0=\CO(\tFX)=\C$. So the assertion of the proposition is trivial.

For $n=1$, we have only one variable $c_{01}$ and $\FX=\C\setminus\{-1,1\}$. Then
\begin{gather}\label{eq_n=1}
V_{\bS^1}(c_{01})=\arccos c_{01},\qquad V_{\Lambda^1}(c_{01})=\arcosh c_{01},\\
dV_{\X^1}=\frac{-\varepsilon\,dc_{01}}{\sqrt{\varepsilon(1-c_{01}^2)}}\,.
\end{gather}
The analytic continuation of  $d V_{\X^1}$ becomes single-valued on the two-sheeted branched covering of~$\C$ with ramification at~$\pm 1$, hence, a fortiori,  it becomes  single-valued on the four-sheeted covering~$\hFX$ of~$\FX$. Therefore, $d\tV_{\X^1}$ belongs to  $\hCF^1_0=\hp^*\A^1(\hFX)$. It follows from assertion~(4) of Lemma~\ref{lem_properties} that $\tV_{\X^1}\in\hCF^0_1$.

Now, let us prove the induction step. Assume that the proposition is proved for $n-2$ and prove it for~$n$. The proof will be based on Schl\"afli's formula~\eqref{eq_Sch_simp}. Notice that our convention $V_{\X^0}=1$ makes this formula true for $n=2$, too. Indeed, in this case it takes the form $dV_{\X^2}=\varepsilon(d\alpha_{\{0\}}+d\alpha_{\{1\}}+d\alpha_{\{2\}})$, which is true, since 
\begin{equation}\label{eq_area}									
V_{\X^2}=\varepsilon(\alpha_{\{0\}}+\alpha_{\{1\}}+\alpha_{\{2\}}-\pi).
\end{equation}

Obviously, the analytic continuations~$\widetilde{R}_I\in\CO(\tFX)$ of the functions~$R_I$ are well defined. Moreover, the analytic continuations of all~$R_I$ become single-valued not only on~$\tFX$ but also on~$\hFX$. Therefore, all functions~$\widetilde{R}_I$ belong to the subspace $\hp^*\CO(\hFX)\subset\CO(\tFX)$. Now, formula~\eqref{eq_d_alpha} implies that the functions~$\alpha_I$ admit the analytic continuations~$\talpha_I\in \CO(\tFX)$ such that the $1$-forms  
\begin{equation}\label{eq_d_talpha}
d\talpha_I=\frac{\varepsilon^{n-1}}{\tR\tR_I}\left(
\frac{\tD_{I',I''}\,d(\tD_{I'}\tD_{I''})}{2\tD_{I'}\tD_{I''}}-d\tD_{I',I''}\right)
\end{equation}
belong to the subspace $\hCF^1_0=\hp^*\A^1(\hFX)\subset\A^1(\tFX)$. (Notice, however, that the functions~$\talpha_I$ themselves do not belong to $\hp^*\CO(\hFX)$.)

By the inductive assumption, the analytic continuation $\tV_{\X^{n-2}}\in\CO(\tFX_{(n-2)})$ of the function~$V_{\X^{n-2}}$ is well defined. Then
it follows from~\eqref{eq_Sch_simp} that the $1$-form~$dV_{\X^n}$ can be continued analytically to the closed $1$-form $d\tV_{\X^n}\in\A^1(\tFX_{(n)})$ given by
\begin{equation}\label{eq_dCVn}
d\tV_{\X^n}=\frac{\varepsilon}{n-1}\sum_{I\subset\{0,\ldots,n\},\,|I|=n-1}\tpr_I^*(\tV_{\X^{n-2}})\,d\talpha_I.
\end{equation}
 The required function $\tV_{\X^n}\in\CO(\tFX)$ is given by 
\begin{equation}\label{eq_CVn}
\tV_{\X^n}(\tC)=V^*_{\X^n}+\frac{\varepsilon}{n-1}\int_{\tC^*_{(n)}}^{\tC}\sum_{I\subset\{0,\ldots,n\},\,|I|=n-1}\tpr_I^*(\tV_{\X^{n-2}})\,d\talpha_I,
\end{equation}
where $V^*_{\X^n}=V_{\X^n}(C^*_{(n)})$ is the volume of the regular simplex in~$\X^n$ with edge~$1$. 
Since $\tFX$ is simply connected, the integral in the right-hand side of~\eqref{eq_CVn} is  independent of the path. Besides, we see that $\tV_{\X^n}(\widetilde{C})=V_{\X^n}(p(\widetilde{C}))$ in a neighborhood of~$\tC^*_{(n)}$.

By the inductive assumption, we have $\tV_{\X^{n-2}}\in\hCF^0_{\lceil\frac{n}{2}\rceil-1,(n-2)}$. Hence we have $\tpr_I^*(\tV_{\X^{n-2}})\in\hCF^0_{\lceil\frac{n}{2}\rceil-1,(n)}$ for all~$I$ such that $|I|=n-1$. But $d\talpha_I\in\hCF^1_{0,(n)}$. Therefore, assertion~(2) of Lemma~\ref{lem_properties} implies that $\tpr_I^*(\tV_{\X^{n-2}})\,d\talpha_I\in \hCF^1_{\lceil\frac{n}{2}\rceil-1,(n)}$. Consequently, by~\eqref{eq_CVn}, $d \tV_{\X^n}\in\hCF^1_{\lceil\frac{n}{2}\rceil-1,(n)}$. Thus, by  assertion~(4) of Lemma~\ref{lem_properties}, $\tV_{\X^n}\in\hCF^0_{\lceil\frac{n}{2}\rceil,(n)}$.
\end{proof}

To prove Theorems~\ref{theorem_key2} and~\ref{theorem_key}, we would like to apply Lemmas~\ref{lem_tech}  and~\ref{lem_tech2}, respectively, to the $1$-form~$d\tV_{\X^n}$. 
So we need to decompose~$X$ into the union $Y\cup Z\cup W$. We put,
$$
Y=\CH,\qquad Z=\bigcup_{|I|=n}\CH_I\,,\qquad W=\bigcup_{2\le |I|<n}\CH_I\,.
$$
Below in this section and in the next section we always assume that $n\ge 2$; then $Y$ is irreducible.
In the next section we shall check that $X$ is essentially real and that the $4$-tuples $(Y,Z,W,\CC_{\bS^n})$ and $(Y,Z,W,\CC_{\Lambda^n})$ satisfy conditions~(A) and~(B) in Lemma~\ref{lem_tech}.
Obviously, the $1$-form $d\tV_{\X^n}$ is closed, i.\,e., satisfies condition~(ii) in Lemma~\ref{lem_tech}. By Proposition~\ref{propos_fh}, the $1$-form $d\tV_{\X^n}$ satisfies condition~(iii).

\begin{lem}\label{lem-iv-a}
The $1$-form $d\tV_{\X^n}$ satisfies condition \textnormal{(iv)} in Lemma~\ref{lem_tech}.
\end{lem}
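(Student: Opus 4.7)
The plan is to apply formula~(\ref{eq_dCVn}) directly and observe that, near a point $C_0\in\CH\setminus(Z\cup W)$, the only source of non-holomorphicity is the single factor $1/\tR=1/\sqrt{\varepsilon^nD(C)}$ appearing in formula~(\ref{eq_d_talpha}) for each $d\talpha_I$. First I would fix such a $C_0$ and choose an open neighborhood $U_0\subset\CG_{(n)}(\C)$ of $C_0$ meeting no component of~$X$ other than~$\CH$; this is possible because $C_0$ lies outside the closed set $Z\cup W$. A branch of $d\tV_{\X^n}$ at~$C_0$ is then, by definition, the restriction of $d\tV_{\X^n}$ to a connected component $\widetilde{U}_0$ of $p^{-1}(U_0\setminus\CH)\subset\tFX$.

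Next I would examine each ingredient of (\ref{eq_dCVn}) on $\widetilde{U}_0$. The ramification locus of $\tV_{\X^{n-2}}$ is $X_{(n-2)}$, so the ramification locus of $\tpr_I^*(\tV_{\X^{n-2}})$ (with $|I|=n-1$) is contained in $\bigcup_{J\subset I,\,|J|\ge 2}\CH_J$, which lies inside $Z\cup W$ and hence avoids~$U_0$; thus the chosen branch of $\tpr_I^*(\tV_{\X^{n-2}})$ extends to a single-valued holomorphic function on~$U_0$. For the factor $d\talpha_I$, formula~(\ref{eq_d_talpha}) gives
$$
d\talpha_I=\frac{\varepsilon^{n-1}}{\tR\,\tR_I}\left(\frac{\tD_{I',I''}\,d(\tD_{I'}\tD_{I''})}{2\tD_{I'}\tD_{I''}}-d\tD_{I',I''}\right),
$$
in which $\tD_{I'}$ and $\tD_{I''}$ are polynomials nonvanishing at~$C_0$ (their zero sets $\CH_{I'}$, $\CH_{I''}$ lie in~$Z$); $\tD_{I',I''}$ is a polynomial, hence holomorphic; and $\tR_I=\sqrt{\varepsilon^{|I|+1}D_I}$ is, for $n\ge 3$, holomorphic and nonzero at~$C_0$ (since $\CH_I\subset Z$ when $|I|=n-1$, so $D_I(C_0)\neq0$), while for $n=2$ one simply has $\tR_I\equiv1$. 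The only non-holomorphic piece that survives is therefore the single factor $1/\tR$.

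Combining these observations, the chosen branch of $d\tV_{\X^n}$ on $\widetilde{U}_0$ has the form $\omega/\sqrt{\varepsilon^nD}$ with $\omega$ a $1$-form holomorphic at~$C_0$; absorbing the constant $1/\sqrt{\varepsilon^n}$ into $\omega$ yields the required form $\omega/\sqrt{D}$, which is condition~(iv) of Lemma~\ref{lem_tech} with $f(C)=D(C)$. The argument is uniform over the branch because the reasoning depends only on the fact that each specific analytically-continued factor is either holomorphic on~$U_0$ or differs from a holomorphic function by a nowhere-vanishing square-root. The main (and rather mild) bookkeeping point is to verify that every $\CH_J$ with $2\le|J|\le n$ and $J\neq\{0,\ldots,n\}$ is contained in $Z\cup W$ and so misses~$C_0$; this is immediate from the definitions of~$Z$ and~$W$. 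The degenerate case $n=2$ requires only the convention $\tV_{\X^0}=1$ together with the observation that (\ref{eq_dCVn}) and (\ref{eq_d_talpha}) remain valid, as noted in the proof of Proposition~\ref{propos_fh}.
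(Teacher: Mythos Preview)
Your argument is correct and follows essentially the same approach as the paper's own proof: use formula~(\ref{eq_dCVn}) together with~(\ref{eq_d_talpha}), observe that the factors $\tpr_I^*(\tV_{\X^{n-2}})$ are holomorphic at~$C_0$ because their ramification lies in~$W$, and that every factor in~(\ref{eq_d_talpha}) except $1/\tR$ is holomorphic at~$C_0$. One small slip: for $|I|=n-1$ (with $n\ge 3$) the hypersurface $\CH_I$ lies in~$W$, not in~$Z$ as you wrote; since $C_0\notin Z\cup W$ the conclusion $D_I(C_0)\ne 0$ is unaffected.
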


\begin{proof}
Since $Y=\CH$, we obtain that the irreducible equation giving $Y$ is $D(C)=0$. Let $C^0$ be a point in~$Y$ that does not belong to $Z\cup W$. Then formula~\eqref{eq_d_talpha} immediately yields that,  in a neighborhood of~$C^0$, any branch of every  $d\talpha_I$ has the form $\omega_I/\sqrt{D(C)}$, where $\omega_I$ is a  $1$-from holomorphic at~$C^0$. But all branches of all multi-valued functions $\tpr_I^*\tV_{\X^{n-2}}(C)= \tV_{\X^{n-2}}(C_I)$ have no singularities off~$W$. Hence condition~(iv) for the $1$-form $d\tV_{\X^n}$ follows from~\eqref{eq_dCVn}.
\end{proof}

To apply Lemma~\ref{lem_tech} (respectively, Lemma~\ref{lem_tech2}) we still need to check conditions~(i) and~(v) (respectively, conditions~(i${}'$), (i${}''$), and~(v${}'$)). Further,  we shall check that   $d\tV_{\X^n}$ satisfies conditions~(i) and~(v) if $\X^n$ is either $\bS^n$ or~$\Lambda^{2m}$, and satisfies conditions~(i${}'$), (i${}''$), and~(v${}'$)  if $\X^n=\Lambda^{2m+1}$ using Theorems~\ref{theorem_key2} and~\ref{theorem_key} for~$\X^{n-2}$, respectively. Then Theorems~\ref{theorem_key2} and~\ref{theorem_key} for~$\X^{n}$ will follow from Lemmas~\ref{lem_tech} and~\ref{lem_tech2}, respectively. Such inductive proofs of Theorems~\ref{theorem_key2} and~\ref{theorem_key} will be given in Section~\ref{section_proof_key} after we check conditions~(A) and~(B).

\section{Conditions~(A) and~(B)}\label{section_AB}

In this section we always assume that $n\ge 2$.

\begin{lem}
The hypersurface $X\subset\CG_{(n)}(\C)$ is essentially real. 
\end{lem}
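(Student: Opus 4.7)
My plan is to verify essential realness component by component, using the list of irreducible components of $X$ already identified in the paper.

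First, the easy components. For each two-element subset $I=\{j,k\}$, the components $\CH_I^+$ and $\CH_I^-$ are the affine hyperplanes $c_{jk}=-1$ and $c_{jk}=1$; they are cut out by real (in fact integer) linear equations and their real loci are real hyperplanes in $\CG_{(n)}(\R)$, so every real point is a regular real point. Hence these components are trivially essentially real.

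Next, the components $\CH_I$ with $|I|\ge 3$. The defining polynomial $D_I(C)$ has integer coefficients, so it remains only to exhibit a regular real point. I would take the point $C^0\in\CG_{(n)}(\R)$ with $c^0_{jk}=-1/(|I|-1)$ for distinct $j,k\in I$ and $c^0_{jk}=0$ for all other off-diagonal entries. Then $C^0_I$ has the form $\lambda E+\mu J$ with $E$ the identity and $J$ the all-ones matrix, so its eigenvalues are $1+(|I|-1)c^0_{jk}=0$ (with multiplicity one) and $1-c^0_{jk}=|I|/(|I|-1)\ne 0$ (with multiplicity $|I|-1$). Thus $D_I(C^0)=0$ and $\mathrm{rank}\,C^0_I=|I|-1$, so at least one $(|I|-1)\times(|I|-1)$ principal minor of $C^0_I$ does not vanish.

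Finally, I would use this to conclude regularity. Since $c_{jk}$ appears in the symmetric matrix $C$ at the two positions $(j,k)$ and $(k,j)$, one has
\begin{equation*}
\frac{\partial D_I}{\partial c_{jk}}(C^0)=2\,(-1)^{j+k}D_{I\setminus\{j\},I\setminus\{k\}}(C^0)
\qquad\text{for distinct }j,k\in I,
\end{equation*}
so the nonvanishing cofactor exhibited in the previous paragraph forces the gradient of $D_I$ at $C^0$ to be nonzero. Hence $C^0\in\CH_I^{reg}(\R)$, which shows $\CH_I$ is essentially real and completes the argument. The whole proof is elementary; there is no real obstacle, only the slight bookkeeping of finding a rank-$(|I|-1)$ real symmetric matrix with unit diagonal, which the equiangular example above handles in one line.
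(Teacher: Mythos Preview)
Your argument is essentially correct, but there is a small slip in the final step. You exhibit a nonvanishing \emph{principal} $(|I|-1)\times(|I|-1)$ minor of $C^0_I$, yet the partial derivatives $\partial D_I/\partial c_{jk}$ (with $j\ne k$, since the diagonal entries are fixed to~$1$) are proportional to the \emph{off-diagonal} minors $D_{I\setminus\{j\},I\setminus\{k\}}(C^0)$. So the nonvanishing principal cofactor does not by itself show that the gradient is nonzero. The fix is easy: since $C^0_I$ has rank $|I|-1$ with kernel spanned by the all-ones vector~$\mathbf{1}$, its adjugate is a nonzero scalar multiple of $\mathbf{1}\mathbf{1}^T$; hence \emph{all} cofactors of $C^0_I$, principal and off-diagonal alike, are equal and nonzero. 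With this remark your proof goes through. (Also, the sign $(-1)^{j+k}$ should really refer to the positions of $j,k$ within~$I$, not to $j,k$ themselves, but this is immaterial for the nonvanishing conclusion.)

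The paper's own proof takes a different and shorter route: rather than constructing an explicit regular real point on each~$\CH_I$, it observes that each of the real polynomials $D_I$ and $D_I^\pm$ assumes both positive and negative values on~$\CG_{(n)}(\R)$, so its real zero locus separates~$\CG_{(n)}(\R)$ and therefore has real dimension equal to $\dim_\C\CH_I$, which is exactly the criterion for essential realness. Your approach is more hands-on and gives a concrete witness point; the paper's argument is quicker and avoids any computation, at the cost of leaving the ``both signs'' claim to the reader.
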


\begin{proof}
Irreducible components of~$X$ are given by the polynomial equations~$D_I(C)=0$ and $D_I^{\pm}(C)=0$ with real coefficients.  Moreover, each of the polynomials~$D_I(C)$ and~$D_I^{\pm}(C)$ takes on~$\CG_{(n)}(\R)$ both positive and negative values.  Therefore each of the irreducible components of~$X(\R)$ separates~$\CG_{(n)}(\R)$, hence, contains regular points.
\end{proof}

{\sloppy
\begin{lem}\label{lem_A}
The $4$-tuples\/ $(Y,Z,W,\CC_{\bS^n})$ and\/ $(Y,Z,W,\CC_{\Lambda^n})$ satisfy condition\/~\textnormal{(A)} in Lemma~\ref{lem_tech}.
\end{lem}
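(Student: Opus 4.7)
The plan is to exhibit an explicit boundary point of $\CC_{\X^n}$ satisfying all three requirements of condition~(A). Choose $n+1$ points in general position in a codimension-one totally geodesic subspace $\X^{n-1} \subset \X^n$ (an equatorial sphere if $\X^n = \bS^n$, a totally geodesic hyperplane if $\X^n = \Lambda^n$), where ``general position'' means that any $k+1$ of them with $0 \le k \le n-1$ form a non-degenerate bounded $k$-simplex in $\X^{n-1}$; such configurations exist since $\X^{n-1}$ is $(n-1)$-dimensional. Let $C^0 \in \CG_{(n)}(\R)$ be the matrix whose off-diagonal entries are the corresponding (hyperbolic) cosines of pairwise distances.

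Next, I would verify that $C^0$ lies on $Y = \CH$ and avoids $Z \cup W$. The $n+1$ vectors representing the chosen points in the ambient model space $\R^{n+1}$ (resp.\ $\R^{1,n}$) all lie in the $n$-dimensional subspace defining $\X^{n-1}$, so they are linearly dependent and $D(C^0)=0$, placing $C^0$ on $\CH$. The general-position hypothesis makes any $|I|$ of these vectors linearly independent whenever $|I| \le n$, which forces $D_I(C^0) \ne 0$ for every proper subset $I \subsetneq \{0,\ldots,n\}$; for $|I|=2$ the off-diagonal entries satisfy $|c_{jk}^0| \ne 1$ since the edge lengths are positive and finite, so the hyperplanes $\CH_I^{\pm}$ are avoided as well. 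Thus $C^0 \notin Z \cup W$. Further, $C^0 \in \overline{\CC_{\X^n}}$: pushing one single vertex off $\X^{n-1}$ along a geodesic perpendicular to $\X^{n-1}$ produces a one-parameter family of non-degenerate simplices in $\X^n$ whose Gram matrices trace out a continuous curve in $\CC_{\X^n}$ emanating from $C^0$. Since $D(C^0)=0$ excludes $C^0$ from $\CC_{\X^n}$ itself, this gives $C^0 \in \partial \CC_{\X^n}$.

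The one genuinely non-trivial step, and the main technical obstacle, is the regularity of $\CH$ at $C^0$. Since $\CH$ is cut out by the single irreducible polynomial $D(C)$, regularity amounts to showing that some partial derivative $\partial D/\partial c_{jk}$ with $j<k$ is non-zero at $C^0$. One has $\partial D/\partial c_{jk} = 2\,(\mathrm{cof}\, C^0)_{jk}$, so it suffices to exhibit a non-vanishing off-diagonal cofactor. Since $\mathrm{rank}\, C^0 = n$, the cofactor matrix of $C^0$ has rank exactly one, and being symmetric it equals $\lambda uu^T$ for some non-zero $\lambda \in \R$ and some non-zero $u \in \R^{n+1}$. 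If every off-diagonal cofactor vanished, the vector $u$ would have at most one non-zero entry, say the $i$-th; the diagonal cofactor identity would then give $D_{\{0,\ldots,n\}\setminus\{j\}}(C^0) = \lambda u_j^2 = 0$ for every $j \ne i$, contradicting the established fact that $D_I(C^0) \ne 0$ for all $|I|=n$. Therefore $u$ has at least two non-zero entries, some off-diagonal cofactor is non-zero, and $C^0 \in Y^{reg}(\R)$, completing the verification of condition~(A).
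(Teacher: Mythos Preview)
Your proof is correct and follows essentially the same construction as the paper: pick $n+1$ points in general position in a totally geodesic hyperplane, take their Gram matrix $C^0$, and verify that $C^0\in\partial\CC_{\X^n}\cap Y^{reg}(\R)\setminus(Z\cup W)$. The only substantive difference is in the regularity step: the paper computes one specific partial derivative using Jacobi's identity~\eqref{eq_Jac}, namely
\[
\left.\frac{\partial D}{\partial c_{01}}\right|_{C^0}=-2D_{\{0,2,\ldots,n\},\{1,2,\ldots,n\}}(C^0)=\pm 2\sqrt{D_{\{0,2,\ldots,n\}}(C^0)\,D_{\{1,2,\ldots,n\}}(C^0)}\ne 0,
\]
whereas you argue structurally that the adjugate of a rank-$n$ symmetric matrix has rank one, hence is $\lambda uu^T$, and then observe that the nonvanishing of \emph{all} diagonal cofactors forces every $u_j\ne 0$ and therefore every off-diagonal cofactor to be nonzero as well. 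Your argument is slightly more conceptual and actually yields more (all partial derivatives are nonzero, not just one), while the paper's computation via Jacobi is more direct since that identity is already in play elsewhere; either route is perfectly adequate here.
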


}

\begin{proof}
Take $n+1$ points $v_0,\ldots,v_n$ in~$\X^n$ such that they lie in an $(n-1)$-dimensional plane, but any $n$ of them do not lie in an  $(n-2)$-dimensional plane. Then the simplex with vertices $v_0,\ldots,v_n$ is degenerate but all its proper faces are non-degenerate. Let $C^0=(c_{jk}^0)$ be the matrix of the cosines (if $\X^n=\bS^n$) or hyperbolic cosines (if $\X^n=\Lambda^n$) of the pairwise distances between the points $v_0,\ldots,v_n$. Then $D(C^0)=0$ and $D_I(C^0)\ne 0$ for all $I$ such that $|I|\le n$. Hence $C^0\in Y(\R)$ and $C^0\notin Z\cup W$. Besides, using Jacobi's identity~\eqref{eq_Jac}, we see that
$$
\left.\frac{\partial D}{\partial c_{01}}\right|_{C^0}=-2D_{\{0,2,\ldots,n\},\{1,2,\ldots,n\}}(C^0)=
\pm 2\sqrt{D_{\{0,2,\ldots,n\}}(C^0)D_{\{1,2,\ldots,n\}}(C^0)}\ne 0.
$$
Therefore $C^0$ is a regular point of~$Y$. Finally, since the points $v_0,\ldots,v_n$ can be shifted arbitrarily small so that the simplex with vertices at them will become non-degenerate, we see that $C^0\in\partial\CC_{\X^n}$.
\end{proof}

If $n=2$, then condition~(B) is satisfied, since $W=\emptyset$. So we assume that $n\ge 3$. 
To prove that  the $4$-tuples $(Y,Z,W,\CC_{\bS^n})$  and  $(Y,Z,W,\CC_{\Lambda^n})$ satisfy condition~(B), we need the following auxiliary lemma. Let $W_1,\ldots, W_r$ be the irreducible components of~$W$ listed in an arbitrary order. It is easy to compute that $$r=2^{n+1}+\frac{n^2-3n}{2}-4.$$

\begin{lem}\label{lem_Cs}
For every $\lambda=1,\ldots,r$, there exists a matrix~$C^{\lambda}\in\CG_{(n)}(\R)$ such that the smooth hypersurfaces $Y^{reg}(\R)$ and $W_{\lambda}^{reg}(\R)$ intersect transversely at\/~$C^{\lambda},$ and $C^{\lambda}$ does not lie in the union of\/~$Z$ and all components\/~$W_{\mu}$ such that\/ $\mu\ne \lambda$.    
\end{lem}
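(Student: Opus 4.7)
The plan is to proceed by case analysis on the type of irreducible component $W_\lambda$: either (A)~$W_\lambda=\CH_I$ for some $I$ with $3\le|I|\le n-1$ (which arises only when $n\ge 4$), or (B)~$W_\lambda=\CH_{\{j,k\}}^{\pm}$ for some pair $\{j,k\}$. For each $\lambda$ I will exhibit an explicit $C^\lambda\in\CG_{(n)}(\R)$ satisfying the required conditions. The conceptual point is that, in contrast to the proof of Lemma~\ref{lem_A}, we cannot realize $C^\lambda$ as the Gram matrix of a genuine point configuration in any $\X^n$: such matrices are always positive semidefinite, and a rank drop in the $(I,I)$-block inevitably propagates to every superset $J\supset I$, forcing $C^\lambda\in Z$. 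I therefore exploit the freedom to take $C^\lambda$ indefinite.

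For Case~B, after reindexing $\{j,k\}=\{0,1\}$, I take $C^\lambda$ of block form $\left(\begin{smallmatrix}A & B\\ B^T & I_{n-1}\end{smallmatrix}\right)$ with $A=\left(\begin{smallmatrix}1 & \mp 1\\ \mp 1 & 1\end{smallmatrix}\right)$ and $B\in\R^{2\times(n-1)}$ to be chosen. The Schur complement gives $\det C^\lambda=\det(A-BB^T)=(1-P)(1-Q)-(1\pm R)^2$, where $P=\sum_m c_{0m}^2$, $Q=\sum_m c_{1m}^2$, $R=\sum_m c_{0m}c_{1m}$. This equation cuts out a nonempty real hypersurface in $B$-space (for $n=3$ and the $\CH^+$ component one may take $c_{02}=c_{13}=\sqrt{2}$, $c_{03}=c_{12}=0$, giving $P=Q=2$, $R=0$); for a generic point on it, $C^\lambda$ has rank $n$, its kernel vector has no zero entries, and no other principal minor vanishes.

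For Case~A, setting $k=|I|$ and reindexing $I=\{0,\ldots,k-1\}$, I first fix a $k\times k$ real symmetric \emph{indefinite} matrix $M$ with units on the diagonal, $\det M=0$, all proper principal minors nonzero, and kernel spanned by a vector $u$ with no zero entries. For $k=3$ the matrix
\[
M=\begin{pmatrix}1 & 2 & \sqrt{3/2}\\ 2 & 1 & \sqrt{3/2}\\ \sqrt{3/2} & \sqrt{3/2} & 1\end{pmatrix},\qquad u=(1,1,-\sqrt{6}),
\]
has eigenvalues $-1,0,4$ and $2\times 2$ principal minors in $\{-3,-1/2\}$, and analogous choices exist for larger $k$. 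Then I set $C^\lambda=\left(\begin{smallmatrix}M & B\\ B^T & I_{n+1-k}\end{smallmatrix}\right)$, so that $C^\lambda_I=M$ and $\det C^\lambda=\det(M-BB^T)$, and I choose $B\in\R^{k\times(n+1-k)}$ generically on the real hypersurface $\{\det(M-BB^T)=0\}$. For such $B$, the matrix $M-BB^T$ has one-dimensional kernel spanned by some $v$, and the full kernel vector $w=(v,-B^Tv)$ of $C^\lambda$ has no zero entries (to avoid the bad stratum $\{B^Tu=0\}$, on which $w=(u,0)$ is forced).

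In both cases the unique (up to scalar) kernel vector $w$ of $C^\lambda$ has no zero entries. The adjugate identity $\mathrm{adj}(C^\lambda)=c\,ww^T$ for symmetric matrices of corank one (with $c\ne 0$) implies $D_J(C^\lambda)=c\,w_{\{0,\ldots,n\}\setminus J}^2\ne 0$ for every $n$-element $J$, so $C^\lambda\notin Z$. The remaining principal minors $D_J(C^\lambda)$ for $J\ne I,\{0,\ldots,n\}$ (respectively $J\ne\{0,1\},\{0,\ldots,n\}$) avoid vanishing by a dimension argument: each equation $D_J=0$ cuts out a proper subvariety of the defining hypersurface, using nonsingularity of the relevant proper principal submatrices of $M$. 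Transversality is immediate from cofactor expansion: in Case~A, $\nabla D_I$ is supported on the coordinates $c_{jk}$ with $j,k\in I$, whereas $\partial D/\partial c_{jm}=-2D_{\{0,\ldots,n\}\setminus\{j\},\{0,\ldots,n\}\setminus\{m\}}(C^\lambda)$ for $j\in I$, $m\in I^c$ is nonzero by the same computation as in Lemma~\ref{lem_A}; in Case~B, $\nabla D_{\{0,1\}}^{\pm}$ points in the single $c_{01}$ direction while $\nabla D$ has nonzero components in many others. The main technical hurdle is the genericity verification in Case~A, i.e.\ showing that each polynomial $\det(M_{J\cap I}-B_{J\cap I,J\cap I^c}B_{J\cap I,J\cap I^c}^T)$ is not identically zero on the hypersurface $\{\det(M-BB^T)=0\}$.
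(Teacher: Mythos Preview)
Your approach is genuinely different from the paper's. The paper gives a single explicit matrix~$C^\lambda$ (depending only on~$k=|I|$ and the sign~$\sigma$) with entries $c_{0j}=-\sigma/\sqrt{k-1}$ for $1\le j\le k-1$, $c_{0j}=2$ for $k\le j\le n-1$, one further nonzero entry~$c_{1n}$, and zeros elsewhere, and then computes \emph{all} principal minors~$D_J(C^\lambda)$ in closed form. This treats Cases~A and~B uniformly and avoids any appeal to genericity.

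Your framework---block form, Schur complement, and the adjugate identity $\mathrm{adj}(C^\lambda)=c\,ww^T$ to show $C^\lambda\notin Z$---is sound, and your transversality argument is correct once you know $w$ has no zero entries. But the proof as written has a substantive gap, which you yourself flag as the ``main technical hurdle'': you never verify the genericity claims. Two points in particular need work. First, for subsets $J$ with $I\subsetneq J\subsetneq\{0,\ldots,n\}$ you must show $D_J=\det(M-B'(B')^T)$ is not identically zero on the hypersurface $\{\det(M-BB^T)=0\}$; at the obvious base point $B=0$ one has $D_J=\det M=0$, so this requires finding a genuinely different point on the hypersurface, and $B=0$ is moreover a \emph{singular} point of that hypersurface (all first partials of $\det(M-BB^T)$ in~$B$ vanish there). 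Second, the claim that the kernel vector $w=(v,-B^Tv)$ has no zero entries for generic~$B$ on the hypersurface is a separate genericity statement requiring its own argument (note $v$ is the kernel of $M-BB^T$, not of~$M$, so your remark about avoiding $\{B^Tu=0\}$ is not quite the right condition). You also assert without proof that suitable matrices~$M$ exist for every $k\ge 4$; this is true (e.g.\ Gram matrices of degenerate hyperbolic simplices work), but should be said.

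These gaps can be filled, but doing so is comparable in effort to the paper's direct computation, and the explicit route has the advantage that nothing is left to check.
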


\begin{proof}
The irreducible component $W_{\lambda}$ is either $\CH_I$, where $3\le |I|<n$, or~$\CH_I^{\sigma}$, where $|I|=2$ and $\sigma\in\{+,-\}$. Without loss of generality, we may assume that $I=\{0,\ldots,k-1\}$, where $2\le k<n$. If $|I|\ge 3$, we choose a sign~$\sigma$ arbitrarily. Consider the matrix $C^{\lambda}=(c_{jl}^{\lambda})\in\CG_{(n)}(\R)$ such that
\begin{gather*}
\begin{aligned}
c_{0j}^{\lambda}&=c_{j0}^{\lambda}=
-\sigma\,\frac{1}{\sqrt{k-1}}\,,&&1\le j\le k-1,\\
c_{0j}^{\lambda}&=c_{j0}^{\lambda}=2,&&k\le j\le n-1,\\
\end{aligned}\\
c_{1n}^{\lambda}=c_{n1}^{\lambda}=\left(1-\frac{1}{4(k-1)(n-k)}\right)^{-1/2},
\end{gather*}
and all other non-diagonal entries of~$C^{\lambda}$ are equal to~$0$.
We denote the set $\{k,\ldots,n-1\}$ by~$K$. A direct computation shows that the principal minors of~$C^{\lambda}$ are as follows:
$$
D_J\bigl(C^{\lambda}\bigr)=\left\{
\begin{aligned}
&-\frac{4|K\setminus J|+\frac{1}{k-1}|I\setminus J|}{4(k-1)(n-k)-1}& &\text{if $0,1,n\in J$,}\\
&1-\frac{|J\cap I|-1}{k-1}-4|J\cap K|& &\text{if $0\in J$ and $\{1,n\}\not\subset J$,}\\
&-\frac{1}{4(k-1)(n-k)-1}& &\text{if $0\notin J$ and $1,n\in J$,}\\
&1&&\text{if $0\notin J$ and $\{1,n\}\not\subset J$.}
\end{aligned}
\right.
$$
Hence $D\bigl(C^{\lambda}\bigr)=D_I\bigl(C^{\lambda}\bigr)=0$ and all other principal minors of~$C^{\lambda}$ are non-zero. Besides, if $|I|=2$, then $D_I^{\sigma}\bigl(C^{\lambda}\bigr)=0$ and $D_I^{-\sigma}\bigl(C^{\lambda}\bigr)\ne0$. Therefore $C^{\lambda}\in Y(\R)\cap W_{\lambda}(\R)$ and $C^{\lambda}$ does not lie in any other irreducible components of~$X$. Further, we have
$$
\left.\frac{\partial D_I}{\partial c_{01}}\right|_{C^{\lambda}}=\sigma \frac{2}{\sqrt{k-1}}\,,\qquad\left.\frac{\partial D}{\partial c_{0k}}\right|_{C^{\lambda}}=\frac{16}{4(k-1)(n-k)-1}\,.
$$
Since $D_I(C)$ is independent of~$c_{0k}$, we see that  the gradients of the polynomials~$D_I$ and~$D$ at~$C^{\lambda}$ are  linearly independent. Therefore $C^{\lambda}$ is a regular point of both hypersurfaces~$Y$ and~$W_{\lambda}$, and these hypersurfaces intersect transversely at~$C^{\lambda}$.
\end{proof}

{\sloppy
\begin{lem}
The $4$-tuples\/ $(Y,Z,W,\CC_{\bS^n})$ and\/ $(Y,Z,W,\CC_{\Lambda^n})$ satisfy condition\/~\textnormal{(B)} in Lemma~\ref{lem_tech}.
\end{lem}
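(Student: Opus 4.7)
The plan is to derive condition~(B) directly from Lemma~\ref{lem_Cs}. For each $\lambda = 1, \ldots, r$, the matrix $C^\lambda$ produced there lies in $Y^{reg}(\R) \cap W_\lambda^{reg}(\R)$, is missed by $Z$ and by every other component $W_\mu$, and satisfies the transversality $T_{C^\lambda} Y^{reg}(\R) + T_{C^\lambda} W_\lambda^{reg}(\R) = T_{C^\lambda}\CG_{(n)}(\R)$. My aim is to bump $C^\lambda$ inside $Y^{reg}(\R)$ to two nearby points whose $\kappa$-vectors differ only in the $\lambda$-th coordinate; together these two points contribute the standard basis vector $e_\lambda \in \Z_2^r$ to the span, and letting $\lambda$ run we recover all of~$\Z_2^r$.

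Fix $\lambda$ and let $h_\lambda$ be the real polynomial of the statement of Lemma~\ref{lem_tech} cutting out $W_\lambda$. Transversality of $Y^{reg}(\R)$ and $W_\lambda^{reg}(\R)$ at $C^\lambda$ is equivalent to the restriction $h_\lambda|_{Y^{reg}(\R)}$ having $C^\lambda$ as a regular zero. Therefore arbitrarily close to $C^\lambda$ along the smooth real hypersurface $Y^{reg}(\R)$ one can find points $C^\lambda_+$ and $C^\lambda_-$ with $h_\lambda(C^\lambda_+) > 0$ and $h_\lambda(C^\lambda_-) < 0$. Since at $C^\lambda$ all the other defining polynomials $h_\mu$ (for $\mu \ne \lambda$) and all defining polynomials of the components of $Z$ are nonzero, by continuity the points $C^\lambda_\pm$ can be taken so close to $C^\lambda$ that they avoid $Z \cup \bigcup_{\mu \ne \lambda} W_\mu$ and that $\kappa_j(C^\lambda_+) = \kappa_j(C^\lambda_-)$ for every $j \ne \lambda$.

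By construction $C^\lambda_\pm$ lie in $Y^{reg}(\R) \setminus (Z \cup W)$, and their $\kappa$-vectors agree except in the $\lambda$-th slot, where they are $0$ and $1$. Thus $\kappa(C^\lambda_+) + \kappa(C^\lambda_-) = e_\lambda$ in $\Z_2^r$. Running $\lambda$ over $1, \ldots, r$, the family of vectors $\{\kappa(z) : z \in Y^{reg}(\R) \setminus (Z \cup W)\}$ contains every standard basis vector, hence spans $\Z_2^r$, which is condition~(B).

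I do not expect a genuine obstacle here: the whole content has already been arranged in Lemma~\ref{lem_Cs}; the present lemma is just the observation that a transverse real intersection on $Y^{reg}$ allows one to flip the $\lambda$-th $\kappa$-coordinate while leaving all other coordinates alone. The only item that demands a moment of care is checking that the perturbations $C^\lambda_\pm$ simultaneously stay away from $Z$ and from the other $W_\mu$, which is immediate from openness and the fact that Lemma~\ref{lem_Cs} places $C^\lambda$ outside all of them.
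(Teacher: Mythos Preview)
Your argument is correct and essentially identical to the paper's: both use Lemma~\ref{lem_Cs} to produce, for each $\lambda$, a point $C^\lambda$ where $Y^{reg}(\R)$ and $W_\lambda^{reg}(\R)$ meet transversely away from $Z$ and the other $W_\mu$, then perturb inside $Y^{reg}(\R)$ to two nearby points $C^\lambda_\pm$ whose $\kappa$-vectors differ exactly in the $\lambda$-th coordinate, concluding that each standard basis vector lies in the subgroup generated by the $\kappa(z)$. One small wording slip: the family of vectors $\kappa(z)$ need not literally \emph{contain} each $e_\lambda$, only generate it as $\kappa(C^\lambda_+)+\kappa(C^\lambda_-)$; but you clearly intend this, and it is exactly what condition~(B) asks.
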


\begin{proof}
Let $a_1,\ldots,a_r$ be the standard basis of the group~$\Z_2^r$. Note that the functions $\kappa\colon \CG_{(n)}(\R)\setminus W\to\Z_2^r$ corresponding to the $4$-tuples $(Y,Z,W,\CC_{\bS^n})$ and $(Y,Z,W,\CC_{\Lambda^n})$ are different from each other, since the signs of the equations $h_{\lambda}(C)=0$ giving the hypersurfaces~$W_{\lambda}$  are chosen in the different ways. Nevertheless, both cases are treated in the same way. So let $\X^n$ be either~$\Lambda^n$ or~$\bS^n$, and let $\kappa$ be the function corresponding to the $4$-tuple $(Y,Z,W,\CC_{\X^n})$. Denote by~$\mathcal{K}$ the subgroup of~$\Z_2^r$ generated by all $\kappa(C)$ such that $C\in Y^{reg}(\R)\setminus(Z\cup W)$. Our goal is to prove that $\mathcal{K}=\Z_2^r$.

For every $\lambda=1,\ldots,r,$  let $C^{\lambda}$ be the matrix in Lemma~\ref{lem_Cs}. Then $Y^{reg}(\R)$ and $W_{\lambda}^{reg}(\R)$ intersect transversely at $C^{\lambda}$, and no other irreducible component of~$X$ contains~$C^{\lambda}$. Let $C^{\lambda}_+$ and $C^{\lambda}_-$ be two points in $Y^{reg}(\R)$ close to~$C^{\lambda}$ and lying on the different sides of~$W_{\lambda}(\R)$. Then $\kappa_{\lambda}\bigl(C^{\lambda}_+\bigr)=\kappa_{\lambda}\bigl(C^{\lambda}_-\bigr)+1$ and $\kappa_{\mu}\bigl(C^{\lambda}_+\bigr)=\kappa_{\mu}\bigl(C^{\lambda}_-\bigr)$ for all $\mu\ne\lambda$. Hence $\kappa\bigl(C^{\lambda}_+\bigr)= \kappa\bigl(C^{\lambda}_-\bigr)+a_{\lambda}$. Therefore $a_{\lambda}\in\mathcal{K}$. Since this holds true for all~$\lambda$, we see that $\mathcal{K}=\Z_2^r$.
\end{proof}

}

\section{Proofs of Theorems~\ref{theorem_key} and~\ref{theorem_key2}}\label{section_proof_key}

We shall prove Theorems~\ref{theorem_key} and~\ref{theorem_key2} by induction on the dimension~$n$. In both cases the induction will be with step~$2$, that is, we shall deduce the assertion of Theorem~\ref{theorem_key} (respectively, Theorem~\ref{theorem_key2}) for $n$ from the assertion of this theorem for $n-2$. So we need to start by proving the basis  of induction for $n=1$ and $n=2$.

\textsl{Basis of induction: $n=1$.\/} In this case we have only one variable $c=c_{01}$. The volume of a one-dimensional simplex is just the length of the segment, hence, $\tV_{\Lambda^1}=\Arcosh(c)$ and $\tV_{\bS^1}=\Arccos(c)$. We have,  $\CC_{\Lambda^1}=(1,+\infty)$ and $\CC_{\bS^1}=(-1,1)$. The assertion of Theorem~\ref{theorem_key2} holds true, since all branches of the multi-valued function $\Arccos(c)$ are real on~$(-1,1)$. The assertion of Theorem~\ref{theorem_key} holds true, since every branch of the multi-valued function $\Arcosh(c)$ on $(1,+\infty)$ has the form $\pm\arcosh c+2\pi i k$, $k\in\Z$, and the sign~$\pm$ is given by the  evenness of the linking number of the loop along which the analytic continuation was taken with the pair of points~$\CH=\{-1,1\}$.

\textsl{Basis of induction: $n=2$.\/}
Combining~\eqref{eq_sin-alpha} and~\eqref{eq_area}, we obtain that
\begin{multline*}
V_{\X^2}(C)=\varepsilon\arccos\left(\frac{\varepsilon(c_{12}-c_{01}c_{02})}{\sqrt{(1-c_{01}^2)(1-c_{02}^2)}}\right)+\varepsilon\arccos\left(\frac{\varepsilon(c_{02}-c_{01}c_{12})}{\sqrt{(1-c_{01}^2)(1-c_{12}^2)}}\right)\\{}+\varepsilon\arccos\left(\frac{\varepsilon(c_{01}-c_{02}c_{12})}{\sqrt{(1-c_{02}^2)(1-c_{12}^2)}}\right)-\varepsilon\pi.
\end{multline*}
The arguments of the arccosines in the right-hand side of this formula belong to $(-1,1)$ whenever $C\in\CC_{\X^2}$, since they are the cosines of the angles of a triangle. After the analytic continuation along a loop, the square roots in the denominators may change their signs. Nevertheless, the arguments of the arccosines will still belong to $(-1,1)$. Since any branch of the arccosine takes real values on $(-1,1)$, we obtain that the multi-valued functions~$\tV_{\bS^2}(C)$ and~$\tV_{\Lambda^2}(C)$ are totally real on the sets~$\CC_{\bS^2}$ and~$\CC_{\Lambda^2}$, respectively.

\textsl{Induction step for Theorem~\ref{theorem_key2}.}  Assume that $\X^n$ is either a sphere or an even-dimensional Lobachevsky space, and $n\ge 3$. Assume that the assertion of Theorem~\ref{theorem_key2} holds true for $\X^{n-2}$. Let us prove the assertion of Theorem~\ref{theorem_key2} for~$\X^n$.

We would like to apply Lemma~\ref{lem_tech} to the $1$-form~$d\tV_{\X^n}$. This $1$-form is closed, that is, satisfy condition~(ii). Proposition~\ref{propos_fh} and Lemma~\ref{lem-iv-a} imply that~$d\tV_{\X^n}$ satisfies conditions~(iii) and~(iv). In Section~\ref{section_AB} we have proved that $X$ is essentially real and the $4$-tuple $(Y,Z,W,\CC_{\X^n})$ satisfies conditions~(A) and~(B). By the inductive assumption, any branch of the function $\tV_{\X^{n-2}}$ is real on~$\CC_{\X^{n-2}}$. Hence any branch of any of the functions $\tV_I=\tpr^*_I(\tV_{\X^{n-2}})$, where $|I|=n-1$, is real  on~$\CC_{\X^{n}}$. On the other hand, formula~\eqref{eq_d_talpha}  implies that both branches of~$d\talpha_I$ on~$\CC_{\X^{n}}$ are real. Therefore, by~\eqref{eq_dCVn}, the $1$-form~$d\tV_{\X^n}$ is totally real on~$\CC_{\X^{n}}$, that is, satisfies condition~(i).

If $\varphi$ is a function holomorphic at~$C^*$ and $\beta$ is a path  starting at~$C^*$ such that $\varphi$ admits the  analytic continuation along~$\beta$, then we  denote by~$\varphi^{(\beta)}$ the holomorphic function in a neighborhood of the end of~$\beta$ obtained by this analytic continuation.

Now, let us prove that the $1$-form~$d\tV_{\X^n}$ satisfies condition~(v). We need to study the behavior of the multi-valued $1$-form~$d\tV_{\X^n}$ in a neighborhood of a point $C^0$ such that $C^0$ is a regular point of~$\CH_J$ for some subset $J\subset\{0,\ldots,n\}$, $|J|=n$, and $C^0$ does not lie in the union of all other hypersurfaces~$\CH_K$, $K\ne J$. Without loss of generality, we may assume that $J=\{0,\ldots,n-1\}$. Let $\beta$ be a path from~$C^*$ to~$C^0$ lying in~$\FX$ except for its endpoint~$C^0$.  Since $C^0\notin W$, the functions~$V_I^{(\beta)}$  are holomorphic at~$C^0$ for all~$I$ such that $|I|=n-1$. Further, formula~\eqref{eq_d_talpha} implies that, if  $I\not\subset J$, then the $1$-form~$d\alpha_I^{(\beta)}$ is holomorphic at~$C^0$, and if $I\subset J$, then the $1$-form~$d\alpha_I^{(\beta)}$ is meromorphic at~$C^0$, and 
\begin{equation}\label{eq_alpha_gamma}
d\alpha^{(\beta)}_I=\frac{\varepsilon^{n-1}D_{I\cup\{n\},J}}{2R^{(\beta)}R^{(\beta)}_I}\cdot\frac{dD_J}{D_J}+\eta_{I,\beta},
\end{equation}
where $\eta_{I,\beta}$ is a $1$-form holomorphic at~$C^0$. The coefficient at~$dD_J/D_J$ in~\eqref{eq_alpha_gamma} is holomorphic at~$C^0$. Hence it follows from~\eqref{eq_dCVn} that the $1$-form~$dV_{\X^n}^{(\beta)}$ is meromorphic at~$C^0$, and 
\begin{equation}\label{eq_d_V_gamma}
dV_{\X^n}^{(\beta)}=Q_{\beta}\cdot\frac{dD_J}{D_J}+\eta_{\beta},
\end{equation}
where $Q_{\beta}$ and $\eta_{\beta}$ are a function and a $1$-form holomorphic at~$C^0$, respectively, and
\begin{equation}\label{eq_Q_beta}
Q_{\beta}(C^0)=\frac{\varepsilon^{n-1}}{2}\sum_{I\subset J,\,|I|=n-1}\frac{D_{I\cup\{n\},J}(C^0)}{R^{(\beta)}(C^0)R^{(\beta)}_I(C^0)}\cdot V_I^{(\beta)}(C^0).
\end{equation}
Since $D_J(C^0)=0$, Jacobi's identity~\eqref{eq_Jac} implies that
\begin{equation*}
\frac{D_{I\cup\{n\},J}(C^0)}{R^{(\beta)}(C^0)R^{(\beta)}_I(C^0)}=\pm i
\end{equation*}
for all $I\subset J$ such that $|I|=n-1$. Hence, 
\begin{equation}\label{eq_Q_beta_new}
Q_{\beta}(C^0)=\frac{i}{2}\sum_{I\subset J,\,|I|=n-1}\pm V_I^{(\beta)}(C^0)
\end{equation}
for an appropriate choice of signs~$\pm$.

Since the $1$-form $dV_{\X^n}^{(\beta)}$ is closed, we obtain that the function~$Q_{\beta}(C)$ is constant on~$\CH_J$ in a neighborhood of~$C^0$. Hence $$Q_{\beta}(C)=Q_{\beta}(C^0)+D_J(C)F_{\beta}(C),$$ where the function~$F_{\beta}$ is holomorphic at~$C^0$. Hence formula~\eqref{eq_d_V_gamma} can be rewritten in the form
\begin{equation}\label{eq_d_V_gamma2}
dV_{\X^n}^{(\beta)}=Q_{\beta}(C^0)\cdot\frac{dD_J}{D_J}+\omega_{\beta},
\end{equation}
where $\omega_{\beta}$ is a $1$-form holomorphic at~$C^0$. 

Our aim is to prove that the number~$Q_{\beta}(C^0)$ is purely imaginary for any~$C^0$ and any~$\beta$. The integral of the $1$-form $dV_{\X^n}^{(\beta)}$ along a small circuit around~$\CH_J$ is equal to $2\pi i Q_{\beta}(C^0)$. Hence the number $Q_{\beta}(C^0)$ does not change under   continuous deformations of the pair~$(C^0,\beta)$ such that $C^0$ remains a regular point of~$\CH_J$ lying on none of~$\CH_K$, $K\ne J$, and $\beta$ remains a path that leads to~$C^0$ and is contained in~$\FX$, except for its endpoint. Since $n\ge 3$, the hypersurface~$\CH_J$ is irreducible. Hence the set $\CH_J^{reg}\setminus\bigcup_{K\ne J}\CH_K$  is connected. Besides, if the point~$C^0$ moves continuously in this set, we can always change continuously the path~$\beta$ leading to~$C^0$. Therefore it is sufficient to prove that $Q_{\beta}(C^0)$ is purely imaginary for some particular point~$C^0$ and for all paths~$\beta$ leading to it. We choose the point~$C^0$ in the following way. Consider a regular $(n-2)$-dimensional simplex $[u_1\ldots u_{n-1}]$ with edge~$1$ in~$\X^{n-2}$. Let~$u_0$ be the centre of this simplex. Let $\uC$ be the Gram matrix of the points $u_0,\ldots,u_{n-1}$ (in the standard vector model of~$\X^{n-2}$). Then $\uC\in\partial\CC_{\X^{n-1}}\cap\CH^{reg}_{(n-1)}$, and all principal minors of~$\uC$ are non-zero, see the proof of Lemma~\ref{lem_A}. Let $L$ be the space of all matrices $C\in\CG_{(n)}(\R)$ whose upper-left submatrix~$C_J$ of size $n\times n$ coincides with~$\uC$. Then $L\subset\CH_J^{reg}(\R)$. The matrix entries $c_{0n},c_{1n},\ldots,c_{n-1,n}$ can be taken for coordinates on~$L$. For each~$K$ such that $n\in K$, the principal minor $D_K(C)$ restricted to~$L$ is a polynomial $P_K(c_{0n},\ldots,c_{n-1,n})$. Since all proper principal minors of~$\uC$ are non-zero, we easily see that none of these polynomials~$P_K$  is identically zero. Hence there exist real numbers $c_{0n}^0,\ldots,c_{n-1,n}^0$ such that $P_I(c_{0n}^0,\ldots,c_{n-1,n}^0)\ne 0$ for all~$K$ such that $n\in K$. Then all proper principal minors of the obtained matrix~$C^0$ are non-zero. Therefore, $C^0\in\CH^{reg}_J(\R)\setminus\bigcup_{K\ne J}\CH_K$.

Now, let us show that $Q_{\beta}(C^0)\in i\R$ for any path~$\beta$ from~$C^*$ to~$C^0$ that is contained in~$\FX$ except for its endpoint~$C^0$. For each subset $I\subset J$ such that $|I|=n-1$, the matrix~$C^0_I$ is the Gram matrix of vertices of a non-degenerate simplex, hence,  $C_I^0\in \CC_{\X^{n-2}}$. Then the assertion of Theorem~\ref{theorem_key2} for~$\X^{n-2}$ yields that  $V^{(\beta)}_I(C^0)\in\R$. Now, it follows from~\eqref{eq_Q_beta_new} that $Q_{\beta}(C^0)\in i\R$. 

Thus, we have proved that the $1$-form $d\tV_{\X^n}$ satisfies all conditions of Lemma~\ref{lem_tech}. Therefore, the function $\CI(d\tV_{\X^n})=\tV_{\X^n}-V_{\X^n}^*$ is totally real on~$\CC_{\X^n}$. (Recall that $V_{\X^n}^*=V_{\X^n}(C^*)$.) Since $V_{\X^n}^*\in\R$, we conclude that the function $\tV_{\X^n}$ is totally real on~$\CC_{\X^n}$, which completes the proof of Theorem~\ref{theorem_key2}.

\textsl{Induction step for Theorem~\ref{theorem_key}.}  
Assume that $n$ odd and $n\ge 3$. Assume that the assertion of Theorem~\ref{theorem_key} holds true for~$\Lambda^{n-2}$, and prove the assertion of Theorem~\ref{theorem_key} for~$\Lambda^n$. 
The assertion of Theorem~\ref{theorem_key} can be reformulated as follows:

\textit{For any element $\gamma\in\pi=\pi_1(\FX,C^*),$ the function 
\begin{equation}\label{eq_tW_gamma}
\tW_{\gamma}=i\left(M_{\gamma}\tV_{\Lambda^n}-(-1)^{\lk(\gamma,\CH)}\tV_{\Lambda^n}\right)
\end{equation}
is totally real on~$\CC_{\Lambda^n}$.}

We would like to apply Lemma~\ref{lem_tech2} to the $1$-form $d\tV_{\Lambda^n}$. Conditions~(A) and~(B) have been checked in Section~\ref{section_AB}, condition~(ii) is obvious, and conditions~(iii) and~(iv) are satisfied by Proposition~\ref{propos_fh} and Lemma~\ref{lem-iv-a}, respectively. The principal branch~$dV_{\Lambda^n}$ of the $1$-form~$d\tV_{\Lambda^n}$ is well defined and real on~$U$, hence, condition~(i${}'$) is also satisfied.

Condition~(i${}''$) for~$d\tV_{\Lambda^n}$ says that all $1$-forms~$d\tW_{\gamma}$ are totally real on~$\CC_{\Lambda^n}$. Let us prove this assertion. Obviously, $M_{\gamma}\tR=(-1)^{\lk(\gamma,\CH)}\tR$ and
$M_{\gamma}\tR_{I}=(-1)^{\lk(\gamma,\CH_{I})}\tR_{I}$. By~\eqref{eq_d_talpha}, we obtain that \begin{equation}\label{eq_mgdta}
M_{\gamma}d\talpha_{I}=(-1)^{\lk(\gamma,\CH)+\lk(\gamma,\CH_{I})}d\talpha_{I}.
\end{equation}
The linking number of the loop $\gamma$ and the hypersurface $\CH_I=\CH_{I,(n)}$ in~$\CG_{(n)}(\C)$ is equal to the linking number of the loop~$\pr_I(\gamma)$ and the hypersurface $\CH_{(n-2)}$ in~$\CG_{(n-2)}(\C)$.
Hence,  combining~\eqref{eq_dCVn}, \eqref{eq_tW_gamma}, and \eqref{eq_mgdta}, we obtain that
\begin{equation}\label{eq_d_Phi_gamma}
d\tW_{\gamma,(n)}=\frac{(-1)^{\lk(\gamma,\CH)+1}}{n-1}\mathop{\sum_{I\subset\{0,\ldots,n\},}}\limits_{|I|=n-1}(-1)^{\lk(\gamma,\CH_I)}\tpr^*_I\left(\tW_{\pr_I(\gamma),(n-2)}\right)d\talpha_{I,(n)}\,.
\end{equation}
By the inductive assumption, the functions $\tW_{\pr_I(\gamma),(n-2)}$ are totally real on~$\CC_{\Lambda^{n-2}}$. Hence the functions $\tpr^*_I\left(\tW_{\pr_I(\gamma),(n-2)}\right)$ are totally real on~$\CC_{\Lambda^{n}}$. On the other hand, it follows from~\eqref{eq_d_talpha}  that the $1$-forms~$d\talpha_{I,(n)}$ are totally real on~$\CC_{\Lambda^{n}}$. Therefore, by~\eqref{eq_d_Phi_gamma}, the $1$-form~$d\tW_{\gamma,(n)}$ is totally real on~$\CC_{\Lambda^{n}}$.

Let us prove that the $1$-form $d\tV_{\Lambda^n}$ satisfies condition~(v${}'$). As in the proof of Theorem~\ref{theorem_key2}, let $C^0$ be a regular point of~$\CH_J$, where~$|J|=n$, such that $C^0$ does not belong to the union of all other hypersurfaces~$\CH_K$, and let~$\beta$ be a path from~$C^*$ to~$C^0$ that is contained in~$\FX$, except for its endpoint~$C^0$. Again, we may assume that $J=\{0,\ldots,n-1\}$. Formulae~\eqref{eq_Q_beta_new} and~\eqref{eq_d_V_gamma2} are obtained literally in the same way as in the proof of Theorem~\ref{theorem_key2}. We need to prove that the numbers~$Q_{\beta}(C^0)$ are real for all~$C^0$ and~$\beta$. As in the proof of Theorem~\ref{theorem_key2}, we see that it is sufficient to prove this assertion for some particular point~$C^0$ and for all paths~$\beta$ leading to it. We take the same matrix~$C^0$ as in the proof of Theorem~\ref{theorem_key2}. Recall that $C^0_J$ is the Gram matrix of the points $u_0,\ldots,u_{n-1}$ in $\Lambda^{n-2}$ such that $u_1,\ldots,u_{n-1}$ are vertices of a regular $(n-2)$-dimensional simplex, and  $u_0$ is the centre of this simplex. 
The matrices~$C_I^0$ lie in~$\CC_{\Lambda^{n-2}}$ for all $I\subset J$ such that $|I|=n-1$, that is, for all $I=J\setminus\{j\}$, $j=0,\ldots,n-1$. The assertion of Theorem~\ref{theorem_key} for~$\Lambda^{n-2}$ implies that    
$$\Re V^{(\beta)}_{I}(C^0)=\pm V_{\Lambda^{n-2}}(C^0_I).$$ 
Substituting this to~\eqref{eq_Q_beta_new}, we obtain that
\begin{equation}\label{eq_ImQ}
\Im Q_{\beta}(C^0)=\frac{1}{2}\sum_{j=0}^{n-1}s_j V_{\Lambda^{n-2}}(C^0_{J\setminus\{j\}})
\end{equation}
for some $s_0,\ldots,s_{n-1}\in\{-1,1\}$. 

Let us vary continuously the pair~$(C^0,\beta)$ so that~$C^0$ remains in~$\CH_J^{reg}(\R)\setminus\bigcup_{K\ne J}\CH_K$. Then the submatrix $C^0_J$ remains in~$\partial\CC_{\Lambda^{n-1}}$. Obviously, the coefficients~$s_j$ do not change under such continuous deformations. On the other hand, as it was mentioned above, the value~$Q_{\beta}(C^0)$ also does not change under such deformations. Hence, the linear combination
\begin{equation}\label{eq_summa}
\sum_{j=0}^{n-1}s_j V_{\Lambda^{n-2}}(C_{J\setminus\{j\}})
\end{equation}
is constant as~$C$ runs over a neighborhood of~$C^0$ in~$\CH_J(\R)$. Now, notice that the sum~\eqref{eq_summa} depends only on the submatrix~$C_J$ rather than on the whole matrix~$C$. Besides, any sufficiently small deformation of~$C_J^0$ such that~$C_J^0$ remains real and degenerate can be extended to a deformation of the pair~$(C^0,\beta)$ such that $C^0$ remains in~$\CH_J^{reg}(\R)\setminus\bigcup_{K\ne J}\CH_K$. Indeed, the deformation of~$C^0$ can be chosen so that the matrix entries $c^0_{0n},\ldots,c^0_{n-1,n}$ remain constant, and the deformation of~$\beta$ can be easily built using the smoothness of the hypersurface~$\CH_J$ in a neighborhood of~$C^0$. Therefore, the linear combination
\begin{equation}\label{eq_summa2}
\sum_{j=0}^{n-1}s_j V(v_0,\ldots,v_{j-1},v_{j+1},\ldots,v_{n-1})
\end{equation}
is constant as the points $v_0,\ldots,v_{n-1}\in\Lambda^{n-2}$ vary in sufficiently small neighborhoods of the points  $u_0,\ldots,u_{n-1}$, respectively. Here by $V(w_1,\ldots,w_{n-1})$ we denote the $(n-2)$-dimensional volume of the simplex in~$\Lambda^{n-2}$ with vertices $w_1,\ldots,w_{n-1}$. 
We put $V_j=V(v_0,\ldots,v_{j-1},v_{j+1},\ldots,v_{n-1})$, $j=0,\ldots,n-1$.

{\sloppy
\begin{lem}\label{lem_lin_comb}
A unique \textnormal{(}up to multiplication by a number\textnormal{)} linear combination of the volumes $V_0,\ldots,V_{n-1}$ remaining constant as the points $v_0,\ldots,v_{n-1}$ vary in sufficiently small neighborhoods of the points $u_0,\ldots,u_{n-1}$, respectively, is the linear combination
\begin{equation}\label{eq_lin_comb2}
V_0-\sum_{j=1}^{n-1}V_j=0.
\end{equation}
\end{lem}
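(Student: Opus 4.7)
The plan is to establish existence and uniqueness separately. Existence is the identity $V_0 = \sum_{j=1}^{n-1} V_j$, which I would deduce from the subdivision of the $(n-2)$-dimensional simplex $[v_1\ldots v_{n-1}]$ into the $n-1$ subsimplices $[v_0, v_1, \ldots, \widehat{v_j}, \ldots, v_{n-1}]$ obtained by joining the interior point $v_0$ to each codimension-one face. The subdivision is valid whenever $v_0$ lies in the interior of $[v_1\ldots v_{n-1}]$; this holds at the base configuration since $u_0$ is the centre of $[u_1\ldots u_{n-1}]$, and hence throughout a whole neighborhood.

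For uniqueness, I would analyze the gradients $\nabla_{v_k} V_j$ at the base point and show that the kernel of the linear map $(s_0,\ldots,s_{n-1}) \mapsto d\bigl(\sum_j s_j V_j\bigr)$ is one-dimensional. The main input is the action of $S_{n-1}$ on the regular configuration by isometries of $\Lambda^{n-2}$ fixing $u_0$ and permuting $u_1,\ldots,u_{n-1}$. Because $V_0$ does not involve $v_0$, we have $\nabla_{v_0} V_0 = 0$. The stabilizer of the index $j$ in $S_{n-1}$ fixes the unit tangent vector $e_j \in T_{u_0}\Lambda^{n-2}$ pointing from $u_0$ toward $u_j$ and permutes the remaining $e_k$, whose sum equals $-e_j$ on account of the identity $\sum_{k=1}^{n-1} e_k = 0$; combined with the transitivity of $S_{n-1}$, this forces $\nabla_{v_0}V_j = c\cdot e_j$ for a common constant $c$ independent of $j$. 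Since the only relation among $e_1,\ldots,e_{n-1}$ is $\sum_j e_j = 0$ and any $n-2$ of them are linearly independent, provided $c\ne 0$ the equation $\sum_{j=1}^{n-1} s_j\nabla_{v_0}V_j = 0$ forces $s_1 = \cdots = s_{n-1} =: t$.

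To finish, I would differentiate the identity $V_0 = \sum V_j$ with respect to $v_k$, $k\ge 1$, to obtain $\nabla_{v_k} V_0 = \sum_{j\ne k,\,j\ge 1} \nabla_{v_k} V_j$, which turns the constancy condition at $v_k$ into $(s_0 + t)\nabla_{v_k} V_0 = 0$. Nondegeneracy of the simplex $[u_1\ldots u_{n-1}]$ gives $\nabla_{v_k} V_0 \ne 0$, hence $s_0 = -t$ and the solution space is the line spanned by $(1,-1,\ldots,-1)$. The main technical point is the nonvanishing of $c$, which I would verify by a direct first-order calculation: displacing $v_0$ from $u_0$ by $\varepsilon e_1$ preserves the opposite face $[u_2,\ldots,u_{n-1}]$ of the simplex defining $V_1$ while changing the distance from $v_0$ to this face at first order in $\varepsilon$ (the Euclidean approximation at $u_0$ already suffices), so that $V_1$ varies at first order and $c\ne 0$.
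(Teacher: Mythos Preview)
Your proposal is correct and follows essentially the same approach as the paper: the existence part is the same subdivision identity, and for uniqueness both you and the paper vary only $v_0$, use the symmetry of the regular simplex to see that $\nabla_{v_0}V_j$ is a nonzero multiple of the direction toward $u_j$, conclude that $s_1=\cdots=s_{n-1}$, and then use the identity together with the nonconstancy of $V_0$ to pin down $s_0$. Your write-up is somewhat more explicit than the paper's (you spell out the stabilizer argument and the nonvanishing of $c$, which the paper simply asserts as ``clear from the symmetry reasons''), but the underlying argument is the same.
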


}

\begin{proof}
The equality~\eqref{eq_lin_comb2} holds true, since the simplex $[v_1\ldots v_{n-1}]$ with volume~$V_0$ is decomposed into $n$ simplices $[v_0\ldots v_{j-1}v_{j+1}\ldots v_{n-1}]$, $j=1,\ldots,n-1$, with volumes $V_1,\ldots,V_{n-1}$, respectively.

Now, let $\sum_{j=0}^{n-1}\lambda_jV_j$ be a constant linear combination. Take $v_j=u_j$ for  $j=1,\ldots,n-1$, and allow to vary the point~$v_0$ only. Then the volume~$V_0$ is constant, and the volumes $V_1,\ldots, V_{n-1}$ become functions in~$v_0$. It is clear from the symmetry reasons that the gradients of these functions at the point $v_0=u_0$ are tangent vectors  $\xi_1,\ldots,\xi_{n-1}$ to the segments $[u_0u_1],\ldots,[u_0u_{n-1}]$, respectively, and the lengths of all these vectors are equal to each other. Then   $\xi_1+\cdots+\xi_{n-1}$ is the only zero linear combination of these vectors. Hence, all coefficients $\lambda_1,\ldots,\lambda_{n-1}$ must be equal to each other. Then equality~ \eqref{eq_lin_comb2} yields \mbox{$\sum_{j=0}^{n-1}\lambda_jV_j=(\lambda_0+\lambda_1)V_0$.} However, the volume~$V_0$ is obviously non-constant if we allow to vary all points~$v_j$. Therefore, we obtain that $\lambda_0+\lambda_1=0$.
\end{proof}

Lemma~\ref{lem_lin_comb} implies that the linear combination~\eqref{eq_summa2} is proportional to the linear combination~\eqref{eq_lin_comb2}, hence, is equal to zero. Therefore, the right-hand side of~\eqref{eq_ImQ} is equal to zero. Hence, the $1$-form~$d\tV_{\Lambda^n}$ satisfies condition~(v${}'$).

The principal branch~$V_{\Lambda^n}(C)$ of the multi-valued function~$\tV_{\Lambda^n}$ on~$\CC_{\Lambda^n}$ is well defined and satisfies zero boundary conditions, since the volume of any degenerate simplex vanishes. Thus, Lemma~\ref{lem_tech2} can be applied to the $1$-form~$d\tV_{\Lambda^n}$ and the function~$\tV_{\Lambda^n}$. This lemma implies that the functions~$\tW_{\gamma}$ are totally real on~$\CC_{\Lambda^n}$ for all $\gamma\in\pi$, which completes the proof of Theorem~\ref{theorem_key}.

\section{Flexible polyhedra and their volumes}\label{section_def_res}

In this section we give a rigorous definition of a flexible polyhedron, and reformulate Theorem~\ref{theorem_main} more precisely. First of all, it is standard for the  theory of flexible polyhedra to restrict ourselves to considering only simplicial polyhedra. Indeed, an arbitrary polyhedral surface has a simplicial subdivision. Passing to this subdivision, we introduce new hinges. Hence all flexions of the initial polyhedral surface induce flexions of the obtained simplicial polyhedral surface, and, possibly, some new flexions appear. Therefore Theorem~\ref{theorem_main} for all flexible polyhedra follows immediately from Theorem~\ref{theorem_main} for simplicial flexible polyhedra. Below, we shall always work with simplicial polyhedra only.

Recall that a $k$-dimensional \textit{pseudo-manifold\/} is a finite simplicial complex~$K$ such that
\begin{enumerate}
\item every simplex of~$K$ is contained in a $k$-dimensional simplex,
\item every $(k-1)$-dimensional  simplex of~$K$ is contained in exactly two $k$-dimensional simplices,
\item $K$ is strongly connected, i.\,e., the complement~$K\setminus\mathrm{Sk}^{k-2}(K)$ is connected, where $\mathrm{Sk}^{k-2}(K)$ is the $(k-2)$-skeleton of~$K$.
\end{enumerate}
A pseudo-manifold~$K$ is said to be \textit{oriented\/} if its $k$-dimensional simplices are endowed with orientations such that, for any two $k$-dimensional simplices~$\sigma_1$ and~$\sigma_2$ with a common $(k-1)$-dimensional face, the orientations  of  this face induced by the orientations of~$\sigma_1$ and~$\sigma_2$ are opposite to each other.

As before, we shall identify the Lobachevsky space $\Lambda^n$ with its standard vector model in~$\R^{1,n}$.
Let $\Delta^k$ be an affine simplex with vertices $v_0,\ldots,v_k$. A mapping $ P\colon\Delta^k\to\Lambda^n$ will be  called \textit{pseudo-linear\/} if 
$$
 P(\beta_0v_0+\cdots+\beta_kv_k)=\frac{\beta_0 P(v_0)+\cdots+\beta_k P(v_k)}{
|\beta_0 P(v_0)+\cdots+\beta_k P(v_k)|}
$$
for any non-negative numbers $\beta_0,\ldots,\beta_k$ such that $\beta_0+\cdots+\beta_k=1$, where $|\bx|=\sqrt{\langle\bx,\bx\rangle}$. It is easy to see that the restriction of a pseudo-linear mapping to a face of~$\Delta^k$ is also pseudo-linear.

In the sequel, we shall assume that the dimension $n$ is greater than or equal to~$2$.

\begin{defin}\label{defin_simplic}
Let $K$ be an oriented $(n-1)$-dimensional pseudo-manifold. A \textit{bounded \textnormal{(}simplicial\textnormal{)}  polyhedron\/} of combinatorial type\/~$K$ in the Lobachevsky space~$\Lambda^n$ is a mapping $ P\colon K\to\Lambda^n$ such that the restriction of~$ P$ to every simplex of~$K$ is pseudo-linear. A \textit{flexion\/} of a bounded  polyhedron of combinatorial type\/~$K$ is a continuous family of polyhedra $P_t\colon K\to\Lambda^n$, where $t$ runs over some interval~$(a,b)$, such that the lengths of all edges of $P_t$ are constant as $t$ varies.  A flexion~$P_t$ is called \textit{non-trivial\/} if the mappings $P_{t_1}$ and $P_{t_2}$ cannot be obtained from each other by an isometry of~$\Lambda^n$ for any  sufficiently close to each other $t_1\ne t_2$. 
\end{defin}

Notice that, for bounded simplicial  polyhedra, the requirement that the lengths of edges are constant during the flexion immediately implies that all faces of the polyhedron remain congruent to themselves during the flexion.

\begin{remark}\label{remark_variant}
Definition~\ref{defin_simplic} is a natural analogue of the standard definition of a flexible polyhedron in the Euclidean space used in~\cite{Sab96}--\cite{Sab98b}, \cite{CSW97}, \cite{Gai11}, \cite{Gai12}. The only difference is that in  the Euclidean case we use affine mappings instead of pseudo-linear. Actually, the usage of pseudo-linear mappings in Definition~\ref{defin_simplic} is convenient but not obligatory. Let us refuse from the requirement that the restriction of~$P$ to every simplex of~$K$ is pseudo-linear, and require only that the restriction of~$P$ to every simplex $\Delta$ of~$K$ is a continuous mapping onto  the convex hull of the points $ P(v)$, where $v$ runs over all vertices of~$\Delta$. Besides, let us not distinguish between polyhedra of the same combinatorial type such that all pairs of their corresponding vertices coincide. This definition is equivalent to Definition~\ref{defin_simplic}, and was used in~\cite{Gai15}.
\end{remark}

Let $\bigl[v_1^{(s)}\ldots v_n^{(s)}\bigr]$, $s=1,\ldots,q$, be all positively oriented $(n-1)$-dimensional simplices of the pseudo-manifold~$K$.  By definition, the \textit{generalized oriented volume\/} of the polyhedron $ P\colon K\to\Lambda^n$ is given by
\begin{equation}\label{eq_gen_vol}
\CV_K( P)=\sum_{s=1}^qV_{or}\bigl(\bo, P\bigl(v_1^{(s)}\bigr),\ldots, P\bigl(v_n^{(s)}\bigr)\bigr),
\end{equation}
where $\bo$ is an arbitrary point in~$\Lambda^n$ and $V_{or}(v_0,\ldots,v_n)$ denotes the oriented volume of the simplex in~$\Lambda^n$ with vertices $v_0,\ldots,v_n$ with the orientation given by the indicated order of vertices. It is easy to see that $\CV_K( P)$ is independent of the choice of the point~$\bo$. 

An equivalent definition of the generalized oriented volume can be given in the following way. For each point $x$ in the complement of the surface $ P(K)$, let $\lambda_P(x)$ be the algebraic intersection number of a path from~$x$ to infinity and  the $(n-1)$-dimensional cycle~$P(K)$. It is easy to show that this intersection number is independent of the choice of the path. The function $\lambda_P(x)$ is called the \textit{indicator function\/} of the polyhedron~$P$.
Then 
\begin{equation}\label{eq_gen_vol2}
\CV_K( P)=\int_{\Lambda^n}\lambda_P(x)\,d V(x),
\end{equation}
where $dV(x)$ is the standard volume measure in~$\Lambda^n$.

If $ P\colon K\to \Lambda^n$ is an embedding, then $\CV_K( P)$ is the usual oriented volume of the region bounded by the polyhedral hypersurface $ P(K)$. 

The following theorem is a more precise formulation of Theorem~\ref{theorem_main}, which is the main result of this paper.

\begin{theorem}\label{theorem_main2}
Suppose that $n$ is odd. Let $K$ be an oriented $(n-1)$-dimensional pseudo-manifold, and let $P_t\colon K\to\Lambda^n$, $t\in(a,b)$, be a flexion of a bounded polyhedron of combinatorial type~$K$. Then the generalized oriented volume $\CV_K(P_t)$ is constant.
\end{theorem}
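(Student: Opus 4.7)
The strategy, outlined by the author in the introduction, is as follows. First, I would realize the configuration space~$\Sigma$ of the flexion as a connected component of a real affine algebraic variety. Fixing an auxiliary base point $\bo\in\Lambda^n$ and treating it as an extra vertex, a realization of $K$ is determined by the Gram matrix of $\bo$ together with all images $P(v)$ ($v$ a vertex of $K$); the conditions that these images lie on the hyperboloid $\langle\bx,\bx\rangle=1$ and that every edge of $K$ has its prescribed length cut out a real affine variety. Let $\Sigma$ be the connected component containing the initial configuration and $\Sigma_\C$ its complexification. For each positively oriented top-dimensional simplex $[v_1^{(s)}\ldots v_n^{(s)}]$ of $K$, the assignment sending a configuration to the Gram matrix of $(\bo,P(v_1^{(s)}),\ldots,P(v_n^{(s)}))$ defines a regular map $\pr_s\colon\Sigma_\C\to\CG_{(n)}(\C)$. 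By Proposition~\ref{propos_fh}, each $\pr_s^*\tV_{\Lambda^n}$ is a well-defined multi-valued analytic function on $\Sigma_\C$, and summing with the appropriate signs yields a multi-valued analytic function $\tCV$ on a Zariski open subset of $\Sigma_\C$ that restricts to $\CV_K(P_t)$ on $\Sigma$.

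The heart of the proof is a two-sided analysis of the monodromy of $\tCV$ along loops in $\Sigma_\C$. \emph{On the one hand}, Schl\"afli's formula~\eqref{eq_Schlaefli} applied to the flexion~$P_t$ yields
\begin{equation*}
d\CV_K(P_t)=-\frac{1}{n-1}\sum_{F\subset P_t,\,\dim F=n-2}V_{n-2}(F)\,d\alpha_F.
\end{equation*}
Because every edge of $K$ has fixed length, each $(n-2)$-volume $V_{n-2}(F)$ is constant on $\Sigma$. Each $d\alpha_F$ extends to a single-valued meromorphic $1$-form on $\Sigma_\C$, and any primitive $\alpha_F$ has monodromy in $2\pi\Z\subset\R$. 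Hence $d\tCV$ is single-valued on $\Sigma_\C$, so the monodromy of $\tCV$ along any loop is a \emph{real} constant. \emph{On the other hand}, applied to each summand $\pr_s^*\tV_{\Lambda^n}$ along a loop $\gamma$ based at a point of $\Sigma$, Theorem~\ref{theorem_key} gives that the new branch $V^{(s)}_2$ and the old branch $V^{(s)}_1$ satisfy $V^{(s)}_2-(-1)^{\lk(\pr_s(\gamma),\CH)}V^{(s)}_1\in i\R$. Assembling these contributions linearly and using that each $V^{(s)}_1$ is real at the base point, one checks that the monodromy of $\tCV$ at any point of $\Sigma$ is \emph{purely imaginary}. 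A number that is both real and purely imaginary must vanish, so $\tCV$ is single-valued.

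Once $\tCV$ is a single-valued holomorphic function on a Zariski open subset of $\Sigma_\C$, I would extend it holomorphically across the residual divisor: individual $\pr_s^*\tV_{\Lambda^n}$ have at most logarithmic singularities where the corresponding simplex degenerates, but single-valuedness forces these logarithmic contributions to cancel in the sum. The hypothesis that $P_t$ is bounded guarantees that $\CV_K|_\Sigma$ is bounded, and a careful estimate of $\tCV$ near infinity in $\Sigma_\C$ should yield at most logarithmic growth, whereupon Liouville's theorem on entire functions forces $\tCV$ to be constant on $\Sigma_\C$, and hence $\CV_K(P_t)$ is constant on $\Sigma$. The main obstacle I anticipate is the sign-matching step in the imaginary-monodromy argument: one must verify that the linking numbers $\lk(\pr_s(\gamma),\CH)$ combine coherently across different simplices~$s$ (using, presumably, the fact that $\det\pr_s(C)$ has constant sign on $\Sigma$, which is special to odd~$n$) so that the sum actually collapses to a purely imaginary monodromy rather than a mixture of real and imaginary terms. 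A secondary subtle point is the Liouville step, where precise growth estimates and a suitable smooth compactification of $\Sigma_\C$ are needed to legitimately invoke the theorem on entire functions.
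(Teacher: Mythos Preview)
Your outline follows the paper's strategy closely, and you have correctly identified the main obstacle. What you are missing is the paper's resolution of that obstacle, which is a simple but decisive trick: instead of analyzing the monodromy along $\gamma$, analyze it along $\gamma^2$. Since Corollary~9.4 (from the Schl\"afli-derived formula $\tCV_K(P)=\frac{i}{n-1}\sum_F V_{F,\bell}\Log Q_F(P)+c_S$) already gives $\CV_K^{(\gamma)}=\CV_K+c_\gamma$ with $c_\gamma\in\R$, one has $\CV_K^{(\gamma^2)}=\CV_K+2c_\gamma$. For $\gamma^2$, each linking number $\lk(\pr_s(\gamma^2),\CH)$ is even, so Theorem~\ref{theorem_key} gives $V^{(s)}_2-V^{(s)}_1\in i\R$ for every~$s$ with no sign ambiguity. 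Summing yields $2c_\gamma\in i\R$, hence $c_\gamma=0$. Your suggested workaround via the sign of $\det\pr_s(C)$ does not work: different simplices~$s$ produce different maps $\pr_s$, and there is no reason the parities of $\lk(\pr_s(\gamma),\CH)$ should agree across~$s$.

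Two further points where the paper is more precise than your sketch. First, the growth estimate for the Liouville step does not come from the individual $\pr_s^*\tV_{\Lambda^n}$ but from the explicit Schl\"afli formula above: since $\Im\tCV_K(P)=\frac{1}{n-1}\sum_F V_{F,\bell}\log|Q_F(P)|$ with $Q_F$ regular, one gets $\Im\tCV_K\le\max_F\log|Q_F^L|$ for a suitable integer~$L$, and a tailored Liouville-type lemma (Lemma~\ref{lem_log}) then forces constancy. No compactification is needed. Second, the real configuration space need not be smooth, so the paper works on a stratum~$S$ of its standard stratification, takes $\Xi$ to be the Zariski closure of~$S$ in $\C^{m(n+1)}$, and proves constancy stratum by stratum; the argument is also first carried out for polyhedra with non-degenerate faces, with the general case reduced to this via a combinatorial decomposition (Lemma~\ref{lem_degenerate}).
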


Definition~\ref{defin_simplic} allows flexible polyhedra to be self-intersecting and degenerate. For instance, one can easily construct an example of a flexible polyhedron in sense of this definition that geometrically is the union of the boundaries of two tetrahedra with a common edge, see~\cite[Sect.~2]{Gai15} for details. Certainly, these two tetrahedra can rotate around their common edge independently of each other. When we consider the question on existence of flexible polyhedra, we would like to avoid such pathological examples. So some additional conditions should be added to the definition of a flexible polyhedron. For instance,  in~\cite{Gai15} the following two conditions are added:
\begin{enumerate}
\item The image of any simplex of~$K$ under the mapping~$P$ is a non-degenerate simplex of the same dimension in~$\Lambda^n$.
\item The simplicial complex~$K$ cannot be decomposed into the union of two its subcomplexes~$K_1$ and~$K_2$ such that $\dim K_1= \dim K_2=n-1$ and the set $P(K_1\cap K_2)$ is contained in an $(n-2)$-dimensional plane in~$\Lambda^n$. 
\end{enumerate}
Polyhedra satisfying~(1) are called \textit{polyhedra with non-degenerate faces,} and polyhedra satisfying both~(1) and~(2) are called \textit{non-degenerate polyhedra.}  In~\cite{Gai13} the author constructed examples of non-degenerate flexible polyhedra in the spaces~$\E^n$, $\bS^n$, and~$\Lambda^n$ of all dimensions. This result makes substantial the assertion of Theorem~\ref{theorem_main2}.

First, we shall prove Theorem~\ref{theorem_main2} for polyhedra with non-degenerate faces. From the geometric viewpoint only this case is sensible. However, for the completeness of our result, we then shall show that Theorem~\ref{theorem_main2} for all flexible polyhedra in sense of Definition~\ref{defin_simplic} follows from Theorem~\ref{theorem_main2} for polyhedra with non-degenerate faces.

\begin{remark}
Definition~\ref{defin_simplic} can be easily modified so as to obtain a rigorous definition of a not necessarily simplicial flexible polyhedron. Namely,  in the definition of a pseudo-manifold one should replace a simplicial complex by a cell complex glued out of affine convex polytopes along affine isomorphisms of their faces. In this case no appropriate analogue of a pseudo-linear mapping exists, so one should replace the requirement that the restriction of~$P$ to every cell of~$K$ is pseudo-linear 
with the requirement  that the restriction of~$P$ to every cell $\sigma$ of~$K$ is a continuous mapping onto  the convex hull of the points $ P(v)$, where $v$ runs over all vertices of~$\sigma$, as in Remark~\ref{remark_variant}. The generalized oriented volume of a polyhedron $P\colon K\to\Lambda^n$ is given by the same formula~\eqref{eq_gen_vol2}. However, as it was mentioned in the beginning of this section, Theorem~\ref{theorem_main} for arbitrary flexible polyhedra follows immediately from Theorem~\ref{theorem_main} for simplicial flexible polyhedra. Hence further we consider only  simplicial flexible polyhedra in sense of Definition~\ref{defin_simplic}.
\end{remark}

\section{The configuration space of polyhedra with given  edge lengths}\label{section_config}

Let $K$ be an oriented $(n-1)$-dimensional pseudo-manifold with $m$ vertices and $r$ edges, and let $\V(K)$ and $\E(K)$ be the sets of vertices and edges of~$K$, respectively. A polyhedron $ P\colon K\to \Lambda^n$ is determined solely by the positions of its vertices. 
For each vertex $v$ of~$K$, we denote by $\bx_v$ the vector representing the point~$P(v)$ in the standard vector model of~$\Lambda^{n}$, and we denote by $x_{v,0},\ldots,x_{v,n}$ the coordinates of~$\bx_v$. Consider the space~$\R^{m(n+1)}=(\R^{1,n})^m$ with the coordinates $x_{v,j}$, $v\in\V(K)$, $j=0,\ldots,n$. The point of this space corresponding to a polyhedron $ P\colon K\to \Lambda^n$ will again be denoted by~$ P$. Then the space of all bounded  polyhedra $ P\colon K\to \Lambda^n$ of combinatorial type~$K$ is the subset of~$\R^{m(n+1)}$ given by
\begin{gather}
\langle \bx_v,\bx_v\rangle=x_{v,0}^2-x_{v,1}^2-\cdots-x_{v,n}^2=1,\label{eq_vL}\\
x_{v,0}>0,\label{eq_vL+}
\end{gather}
where $v$ runs over~$\V(K)$.

We fix a set $\bell=(\ell_e)$ of positive real numbers indexed by edges $e\in\E(K)$. 
Then the configuration space~$\Sigma^+(\bell)$ of all polyhedra $ P\colon K\to\Lambda^n$ with the prescribed set of edge lengths~$\bell$ is the subset of~$\R^{m(n+1)}$ given by $m$~equations~\eqref{eq_vL}, $m$~inequalities~\eqref{eq_vL+}, and $r$~equations
\begin{equation}
\langle\bx_u,\bx_v\rangle=x_{u,0}x_{v,0}-x_{u,1}x_{v,1}-\cdots-x_{u,n}x_{v,n}=\cosh\ell_{[uv]},\label{eq_uv}
\end{equation}
where $[uv]$ runs over all edges of~$K$. (Certainly, the configuration space~$\Sigma^+(\bell)$ may be empty.) We also denote by~$\Sigma(\bell)$ the real affine variety in~$\R^{m(n+1)}$ given by the $m+r$ quadratic equations~\eqref{eq_vL} and~\eqref{eq_uv}. Since $K$ is connected, it follows easily from~\eqref{eq_uv} that, for each point $ P=(x_{v,j})\in\Sigma(\bell)$, either all~$x_{v,0}$ are positive or all~$x_{v,0}$ are negative. Hence $\Sigma(\bell)=\Sigma^+(\bell)\cup(-\Sigma^+(\bell))$.

The following proposition is a reformulation of Theorem~\ref{theorem_main2}.

\begin{propos}\label{propos_const0}
Suppose that $n$ is odd. Then the function $\CV_K( P)$ is constant on every connected component of\/~$\Sigma^+(\bell)$.
\end{propos}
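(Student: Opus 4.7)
The plan is to follow the scheme outlined in the introduction: complexify the configuration space and the volume function, show that the resulting multi-valued function is actually single-valued, then exploit a growth estimate to conclude it is constant.

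First I would introduce the complexification. Let $\Sigma_\C(\bell)\subset\C^{m(n+1)}$ be the complex affine variety defined by the same $m+r$ quadratic equations~\eqref{eq_vL} and~\eqref{eq_uv}; then $\Sigma(\bell)=\Sigma_\C(\bell)\cap\R^{m(n+1)}$. Fix a connected component $\Sigma_0^+$ of $\Sigma^+(\bell)$, choose an irreducible component $\Sigma_\C^0$ of (the closure of) $\Sigma_\C(\bell)$ containing $\Sigma_0^+$, and let $Y\subset \Sigma_\C^0$ be the closed subvariety where some $n$-simplex image $[P(v^{(s)}_1)\ldots P(v^{(s)}_n)]$ becomes degenerate, i.e.\ the corresponding Gram submatrix lies on the hypersurface~$X$ of Section~4. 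On the smooth Zariski-open set $\FX=(\Sigma_\C^0)^{reg}\setminus Y$ the generalized oriented volume $\CV_K$ extends, via the maps assigning to $P$ the Gram matrices $C^{(s)}(P)$ of its simplices and summing the pullbacks $(\pr_s)^*\tV_{\Lambda^n}$, to a multi-valued analytic function $\tCV$ on $\FX$. A version of Schl\"afli's formula for non-convex (even self-intersecting) simplicial polyhedra (Lemma~\ref{lem_Schlaefli}) applied to $\tCV$ along curves in $\FX$ reads
\begin{equation*}
-d\tCV = \frac{1}{n-1}\sum_{F} V_{n-2}(F)\,d\tilde\alpha_F,
\end{equation*}
where the sum is over codimension-$2$ faces of $P$ and $\tilde\alpha_F$ are the analytically continued dihedral angles.

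The key single-valuedness argument has two sides. On one hand, for any loop $\gamma$ in $\FX$ based at a point $P_0\in\Sigma_0^+$, each $\Var_\gamma\tilde\alpha_F$ equals $2\pi i n_F$ with $n_F\in\Z$ (the logarithmic branching along the $\CH_J$ hyperplanes governing the cosines of dihedral angles), and $V_{n-2}(F)$ is real at~$P_0$, so $\Var_\gamma\tCV(P_0)$ is purely imaginary. On the other hand, Theorem~\ref{theorem_key} applied termwise to each $\tV_{\Lambda^n}(C^{(s)})$ gives that $\Var_\gamma\tCV(P_0)-\bigl((-1)^{\sum_s\lk(\gamma_s,\CH)}-1\bigr)\tCV(P_0)$ is purely imaginary if all signs conspire, and more importantly, a parity computation shows that the contribution of the $(-1)^{\lk}$ factor to the real part is zero when summed against the oriented pseudo-manifold structure of $K$ (because in odd dimension reversing orientation of a simplex changes the sign of the volume, and the pseudo-manifold coherence pairs the $\pm 1$ monodromies compatibly). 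Hence $\Var_\gamma\tCV(P_0)$ is simultaneously real and purely imaginary, therefore zero, and $\tCV$ descends to a single-valued holomorphic function on $\FX$.

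Next I would extend $\tCV$ to all of $\Sigma_\C^0$. Condition~(v${}'$) in Lemma~\ref{lem_tech2}, established in the proof of Theorem~\ref{theorem_key}, says that $\tV_{\Lambda^n}$ has only logarithmic singularities with real residues along the codimension-$n$ strata $\CH_J$; summed over faces of $K$ and combined with the fact that $\tCV$ is already known to be single-valued, these logarithmic contributions cancel, so $\tCV$ extends to a holomorphic function on all of $(\Sigma_\C^0)^{reg}$, and, by normality of the ambient complexified variety (or by Hartogs-type extension across the codimension-$\ge 2$ singular locus of $\Sigma_\C^0$), to a holomorphic function on $\Sigma_\C^0$. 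Finally, the growth estimate: the coordinates $x_{v,j}$ are bounded by polynomials in $\cosh\ell_e$ along $\Sigma_\C^0$, and each $\tV_{\Lambda^n}(C^{(s)})$ grows at most logarithmically as $C^{(s)}\to\infty$ in $\CG_{(n)}(\C)$ (this is the standard estimate from Aomoto's representation via iterated integrals of $1$-forms with logarithmic poles). Hence $\tCV$ is an entire function on the affine variety $\Sigma_\C^0$ with at most logarithmic growth, in particular of polynomial growth of degree $0$, so by Liouville's theorem (applied to $\tCV$ restricted to any algebraic curve in $\Sigma_\C^0$, or directly to a projective compactification) the function $\tCV$ is constant.

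The hard part, and what I expect to require the most care, is the single-valuedness step: one must keep precise track of how the monodromy parities $\lk(\gamma,\CH_I)$ for each individual simplex interact with the orientation-cocycle structure of the pseudo-manifold~$K$, so that the ``real'' conclusion from Schl\"afli's formula and the ``purely imaginary'' conclusion from Theorem~\ref{theorem_key} really land on the \emph{same} linear combination, forcing the variation to vanish. The other delicate point is justifying the logarithmic growth bound globally on $\Sigma_\C^0$, which requires uniform control of the branches of $\tV_{\Lambda^n}$ as the Gram matrices degenerate on the boundary of $\Sigma_\C^0$ at infinity; Aomoto's iterated-integral representation together with standard estimates for polylogarithms should suffice.
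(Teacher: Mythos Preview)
Your overall architecture matches the paper's: complexify, use Schl\"afli to get one constraint on the monodromy, use Theorem~\ref{theorem_key} to get the opposite constraint, conclude single-valuedness, then Liouville. But several of the key steps are either reversed or not justified.

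\textbf{The real/imaginary roles are swapped.} You claim $\Var_\gamma\tilde\alpha_F=2\pi i n_F$, hence the Schl\"afli side makes $\Var_\gamma\tCV$ purely imaginary. In fact $\exp(i\alpha_F)$ is a polynomial $Q_F(P)$ on the configuration space, so $\alpha_F=-i\Log Q_F$ and $\Var_\gamma\alpha_F=2\pi n_F\in\R$. Thus Schl\"afli gives that the monodromy of $\tCV$ is \emph{real} (this is the paper's Corollary~\ref{cor_real_const}), not purely imaginary. Correspondingly, Theorem~\ref{theorem_key} must supply the purely imaginary conclusion.

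\textbf{The ``parity computation'' is not an argument.} You try to extract a conclusion from Theorem~\ref{theorem_key} by asserting that the signs $(-1)^{\lk(\gamma_s,\CH)}$ cancel against the pseudo-manifold orientation. There is no reason for this: the loops $\gamma_s=C^{(s)}(\gamma)$ are independent loops in $\CG_{(n)}(\C)\setminus X$, and their linking numbers with $\CH$ bear no a priori relation to the orientations of the simplices of~$K$. The paper avoids this entirely by a simple trick: replace $\gamma$ by $\gamma^2$. Then every $\lk(C^{(s)}(\gamma^2),\CH)$ is even, Theorem~\ref{theorem_key} gives that each summand in~\eqref{eq_2c} is purely imaginary, and since Schl\"afli already forced $\Var_{\gamma^2}\tCV=2c_\gamma$ with $c_\gamma$ real, one gets $c_\gamma=0$.

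\textbf{The Liouville step does not work as you describe it.} You propose to extend $\tCV$ holomorphically to all of $\Sigma_\C^0$ by arguing that the logarithmic singularities cancel. They do not cancel in general, and the paper never extends beyond the Zariski open set $\Upsilon$ where the $Q_F$ are nonzero. Instead, the explicit formula $\tCV=\frac{i}{n-1}\sum_F V_{F,\bell}\Log Q_F+c_S$ gives directly $\Im\tCV\le\max_F\log|Q_F^L|$ for an integer $L$, and the paper proves a tailored Liouville-type lemma (Lemma~\ref{lem_log}) for holomorphic functions on smooth affine varieties whose imaginary part is bounded by $\max_n\log|f_n|$ with $f_n$ regular. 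Your appeal to ``polylogarithm estimates at infinity'' is both vaguer and aimed at the wrong target.

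Two smaller points. First, you take an irreducible component of $\Sigma_\C(\bell)$ containing $\Sigma_0^+$; the paper instead passes to a stratum $S$ of the standard real stratification and takes its Zariski closure $\Xi$, which guarantees $S\subset\Xi^{reg}$ and $\dim_\C\Xi=\dim_\R S$ (Proposition~\ref{propos_strat}); without this you cannot be sure analytic continuation from real points is unique. Second, your argument tacitly assumes all faces are non-degenerate (otherwise $V_{n-2}(F)$ and $\alpha_F$ are not defined); the paper treats this case first and then reduces the general case to it by a separate combinatorial lemma (Lemma~\ref{lem_degenerate}).
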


Any real affine variety $\Sigma\subset\R^N$ possesses a natural stratification that can be constructed in the following way, see~\cite[Sect.~11(b)]{Whi57}. Let $d=\dim\Sigma$. Then $\Sigma=\Sigma_1\cup\Sigma_2$, where $\Sigma_1$ is the the union of all $d$-dimensional irreducible components of~$\Sigma$, and $\Sigma_2$ is the the union of all irreducible components of~$\Sigma$ of dimensions strictly less than~$d$. Put $M=\Sigma^{reg}_1\setminus\Sigma_2$ and $\Sigma'=\Sigma^{sing}_1\cup\Sigma_2$; then $\Sigma=M\sqcup\Sigma'$. The set $\Sigma'$ is a real affine variety of dimension strictly less than~$d$. All connected components of~$M$ are taken for $d$-dimensional stata. Then the same procedure is recursively applied to~$\Sigma'$. The obtained stratification will be referred to as the \textit{standard stratification\/} of~$\Sigma$. By construction, every stratum~$S$ of this stratification is a connected open (in the analytic topology) subset of the set of regular points of an irreducible real affine variety. This easily implies the following proposition.

\begin{propos}\label{propos_strat}
Let $S$ be a stratum of the standard stratification of a real affine variety~$\Sigma\subset\R^N,$ and let $\Xi$ be the Zariski closure of~$S$ in~$\C^N$. Then $\Xi$ is an irreducible complex affine variety, $\dim_{\C}\Xi=\dim_{\R}S,$ and $S\subset \Xi^{reg}$. 
\end{propos}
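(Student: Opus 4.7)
The plan is to trace through the recursive construction of the standard stratification to exhibit an irreducible real affine variety $Y \subset \Sigma$ with $\dim_\R Y = d := \dim_\R S$ and $S \subset Y^{reg}$, and then to show that the complex Zariski closure of $Y$ is an irreducible complex variety $W \subset \C^N$ with $\dim_\C W = d$ and $W = \Xi$. At the stage where $S$ first emerges as a top-dimensional stratum, it is a connected component of $\Sigma_1^{reg} \setminus \Sigma_2$; for any point $p \in S$, the component $Y$ of $\Sigma_1$ containing $p$ is a $d$-dimensional real-irreducible subvariety with $p \in Y^{reg}$. The key classical fact (going back to Whitney) is that any such $Y$ arises as $W(\R)$ for a unique irreducible complex subvariety $W \subset \C^N$ defined over~$\R$ with $\dim_\C W = d$: an irreducible component of $\Sigma_\C$ contributes to the $d$-dimensional part of~$\Sigma$ only if it is defined over~$\R$ and has complex dimension $d$, whereas pairs of conjugate components and components of larger complex dimension produce only real sets of dimension strictly less than~$d$ and thus feed into later stages of the stratification.

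The main step is then to prove $\Xi = W$. The inclusion $\Xi \subset W$ is immediate, since $S \subset Y \subset W$. For the reverse inclusion, fix $p \in S$. Since $p \in W^{reg}$, an analytic neighborhood of $p$ in~$W$ is biholomorphic to an open ball in~$\C^d$, and one may choose the biholomorphism so that in these local coordinates $Y$ near $p$ corresponds to $\R^d \subset \C^d$. Thus $S$ is identified near $p$ with a nonempty open subset $U$ of $\R^d$. Any complex polynomial vanishing on~$S$ restricts to a holomorphic function on~$W$ that vanishes on $U \subset \R^d \subset \C^d$; by the identity principle for holomorphic functions on $\C^d$, this restriction vanishes on an open neighborhood of $p$ in~$W$, and by irreducibility of~$W$ it vanishes on all of~$W$. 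Therefore $W \subset \Xi$, and hence $\Xi = W$.

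Once $\Xi = W$ is established, all three conclusions follow at once: $\Xi$ is irreducible because $W$ is; $\dim_\C \Xi = \dim_\C W = d = \dim_\R S$; and $S \subset W^{reg} = \Xi^{reg}$. The principal obstacle is the preliminary claim that the $d$-dimensional real-irreducible component $Y$ must be of the form $W(\R)$ for an irreducible complex variety of matching complex dimension; this is a dimension-bookkeeping statement about the Whitney decomposition, to the effect that any contribution of the wrong type to $\Sigma_\C$ produces real points of strictly smaller real dimension and hence cannot account for any part of a $d$-dimensional real stratum.
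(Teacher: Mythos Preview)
Your argument is correct and in fact supplies considerably more detail than the paper, which merely records that by construction each stratum~$S$ is a connected open subset of~$Y^{reg}$ for some irreducible real affine variety~$Y$ and then asserts that the proposition ``easily'' follows. Your identity-principle step showing $\Xi=W$ is exactly the content the paper leaves to the reader.

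One simplification for what you call the ``principal obstacle'': you do not need to sort through the irreducible components of~$\Sigma_\C$ and argue by elimination. Since $Y$ is irreducible as a real variety and nonempty, the ideal $P:=I_\R(Y)\subset\R[x_1,\ldots,x_N]$ is a \emph{real} prime: if $f^2+g^2\in P$ then $f(p)^2+g(p)^2=0$ at every $p\in Y\subset\R^N$, forcing $f,g\in P$. It follows that the fraction field of $\R[x]/P$ does not contain~$\sqrt{-1}$, so $P\otimes_\R\C$ remains prime and $W:=V_\C(P\otimes\C)$ is irreducible. Its complex dimension is the Krull dimension of $\R[x]/P$, which equals $d$ because $Y$ has smooth real points (any point of~$S$); and $p\in W^{reg}$ because regularity of the local ring is preserved under the flat base change $\R\to\C$. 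With $W$ in hand, your identity-principle argument finishes cleanly. The aside about ``pairs of conjugate components'' and ``components of larger complex dimension'' is then unnecessary, and the latter phrase is a bit confusing in this setting, since every minimal prime of the real-radical ideal $I_\R(\Sigma)$ is itself real and complexifies to a single component of the correct dimension.
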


The set~$\Sigma^+(\bell)$, which we are interested in, generally is not a real affine variety, but it is the union of several connected components of the real affine variety~$\Sigma(\bell)$. The restriction of the standard stratification of~$\Sigma(\bell)$ to~$\Sigma^+(\bell)$ will be called the \textit{standard stratification\/} of~$\Sigma^+(\bell)$.  

Since the standard stratification of~$\Sigma^+(\bell)$ consists of finitely many strata, Proposition~\ref{propos_const0}  will follow if we show that  the  function $\CV_K( P)$ is constant on every stratum~$S$ of the standard stratification of~$\Sigma^+(\bell)$. 
This will be done in three steps. First, using Schl\"afli's formula, we shall show that the function~$\CV_K( P)$ restricted to~$S$ can be  continued to a multi-valued analytic function~$\tCV_K( P)$ on a dense Zariski open subset $\Upsilon\subset \Xi^{reg}$, where $\Xi$ is the Zariski closure of~$S$ in~$\C^{m(n+1)}$. Second, we shall deduce from Theorem~\ref{theorem_key} that the analytic function~$\tCV_K( P)$ is in fact single-valued. Third, we shall use Liouville's theorem on entire functions  
to show that the function~$\tCV_K( P)$ is constant. This scheme will be realized in Sections~\ref{section_proof_main} and \ref{section_Liouville} for flexible polyhedra with non-degenerate faces. Thus we shall prove Theorem~\ref{theorem_main2} for such flexible polyhedra. In Section~\ref{section_proof_main_arbitrary}, we shall show that Theorem~\ref{theorem_main2} for arbitrary polyhedra follows from Theorem~\ref{theorem_main2} for polyhedra with non-degenerate faces.

\section{Proof of Theorem~\ref{theorem_main2} for polyhedra with non-degenerate faces}\label{section_proof_main}

In this section all polyhedra under consideration are supposed to have non-degenerate faces. Notice that the property of a polyhedron $P\colon K\to\Lambda^n$ to have non-degenerate faces is determined solely by the combinatorial type and the set of edge lengths of the polyhedron. So in this section we consider only those pairs $(K,\bell)$ that polyhedra of combinatorial type~$K$ with the set of edge lengths~$\bell$ have non-degenerate faces.

We define the \textit{oriented dihedral angle\/}~$\alpha_F$ of a polyhedron $ P\colon K\to \Lambda^n$  at an $(n-2)$-dimensional face $F$ of~$K$ in the following way. Let $\sigma_1$ and $\sigma_2$ be the two $(n-1)$-dimensional simplices of~$K$ containing~$F$. Take any point~$\bx$ in~$ P(F)$. Let $\bn_1$ and~$\bn_2$ be the unit vectors in the tangent spaces $T_{\bx} P(\sigma_1)$ and~$T_{\bx} P(\sigma_2)$, respectively, orthogonal to~$T_{\bx} P(F)$ and pointing inside the simplices~$ P(\sigma_1)$ and~$ P(\sigma_2)$, respectively. Let $\bm_1$ and $\bm_2$ be the \textit{outer normal vectors\/} to $ P(\sigma_1)$ and~$ P(\sigma_2)$, respectively, at~$\bx$, i.\,e., the unit vectors in~$T_{\bx}\Lambda^n$ orthogonal to $T_{\bx} P(\sigma_1)$ and to~$T_{\bx} P(\sigma_2)$, respectively, such that the product of the direction of~$\bm_i$ and the positive orientation of~$ P(\sigma_i)$ yields the positive orientation of~$\Lambda^n$. We say that the positive direction of rotation around~$ P(F)$ is from $\bm_1$ to~$\bn_1$, and denote by~$\alpha_F=\alpha_F( P)$ the angle from~$\bn_1$ to~$\bn_2$ in this positive direction. This angle is well defined up to~$2\pi k$, $k\in\Z$. We shall always consider~$\alpha_F$ as an element of~$\R/(2\pi\Z)$. It is easy to see that~$\alpha_F$ is independent of the choice of the point~$\bx$ and does not change if we interchange the simplices~$\sigma_1$ and~$\sigma_2$.

Schl\"afli's formula for the differential of the volume is usually written for convex polytopes in~$\Lambda^n$, see the Introduction.
However, the convexity is unimportant. The following version of Schl\"afli's formula follows immediately from Schl\"afli's formula for a simplex, formula~\eqref{eq_gen_vol}, and the above definition of oriented dihedral angles.

\begin{lem}[Schl\"afli's formula]\label{lem_Schlaefli}
The differential of the generalized oriented volume of a polyhedron $ P\colon K\to\Lambda^n$ with non-degenerate faces is given by
\begin{equation}\label{eq_Sch_or}
d \CV_K( P)=-\frac{1}{n-1}\sum_{F\subset K,\,\dim F=n-2}V_F( P)\,d\alpha_F( P),
\end{equation}
where the sum is taken over all\/ $(n-2)$-dimensional faces~$F$ of\/~$K,$ and\/ $V_F(P)$ is the \textnormal{(}unoriented\textnormal{)} $(n-2)$-volume of the simplex~$ P(F)$.
\end{lem}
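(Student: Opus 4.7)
The plan is to apply the classical Schl\"afli formula for a single simplex to each of the $n$-simplices appearing in the decomposition~\eqref{eq_gen_vol} of~$\CV_K(P)$ as a signed sum of simplex volumes based at~$\bo$, and then reorganize the resulting contributions according to which $(n-2)$-face of the image surface they belong to. Fix~$\bo$ generically so that along the flexion under consideration every cone simplex $\Delta_s:=[\bo,P(v_1^{(s)}),\ldots,P(v_n^{(s)})]$ is non-degenerate; the quantity~$d\CV_K(P)$ does not depend on~$\bo$, so this is harmless. Let $\eta_s\in\{+1,-1\}$ be the sign with which the indicated ordering of the vertices of~$\Delta_s$ agrees with the fixed orientation of~$\Lambda^n$, so that $V_{or}(\bo,P(v_1^{(s)}),\ldots,P(v_n^{(s)}))=\eta_s V_n(\Delta_s)$; these signs are locally constant under the flexion. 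Applying the classical Schl\"afli formula in~$\Lambda^n$ to each~$\Delta_s$ and summing $\eta_s$-weighted over~$s$ gives
\[
d\CV_K(P)=-\frac{1}{n-1}\sum_s\eta_s\sum_{G\subset\Delta_s,\,\dim G=n-2}V_{n-2}(G)\,d\beta_G^{(s)},
\]
where~$\beta_G^{(s)}\in(0,\pi)$ is the interior dihedral angle of~$\Delta_s$ at its $(n-2)$-face~$G$.

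\textbf{Reorganization of the sum.} I would split the inner sum according to whether~$G$ contains the apex~$\bo$. If~$G$ does not contain~$\bo$, then $G=P(F)$ for a unique $(n-2)$-face~$F$ of~$K$, and since~$K$ is an $(n-1)$-pseudo-manifold,~$F$ is shared by exactly two $(n-1)$-simplices~$\sigma_{s_1},\sigma_{s_2}$ of~$K$. All contributions involving~$F$ therefore collapse to $-\frac{V_F(P)}{n-1}\,d\bigl(\eta_{s_1}\beta_{P(F)}^{(s_1)}+\eta_{s_2}\beta_{P(F)}^{(s_2)}\bigr)$. Working in the two-dimensional hyperbolic cross-section orthogonal to~$P(F)$ at a regular interior point, the facets $P(\sigma_{s_1})$, $P(\sigma_{s_2})$ and the hyperplane spanned by~$\bo$ and~$P(F)$ appear as three geodesic rays from the cross-section point, and an elementary angle chase using the convention fixed in Section~\ref{section_def_res} for~$\alpha_F$ (outer normals~$\bm_i$ plus the positive-rotation rule from~$\bm_1$ to~$\bn_1$) identifies the bracketed expression with~$\alpha_F(P)$ modulo an integer combination of~$\pi$ that depends only on discrete geometric data (on which side of the auxiliary hyperplane the vertex of~$\sigma_{s_i}$ opposite to~$F$ lies). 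Taking the differential annihilates the locally constant correction and yields the desired summand $-\frac{1}{n-1}V_F(P)\,d\alpha_F(P)$ of~\eqref{eq_Sch_or}.

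\textbf{Vanishing of the complementary term.} If~$G$ contains~$\bo$, then $G=[\bo,P(u_1),\ldots,P(u_{n-2})]$ for some $(n-3)$-simplex~$H=[u_1\ldots u_{n-2}]$ of~$K$. Since~$K$ is an $(n-1)$-pseudo-manifold, the star of~$H$ in~$K$ is a $1$-dimensional pseudo-manifold, i.e., a disjoint union of cycles of $(n-1)$-simplices. The corresponding cone simplices~$\Delta_s$ all share the common $(n-2)$-dimensional axis~$[\bo,P(H)]$, and in the two-dimensional hyperbolic cross-section orthogonal to this axis they appear as oriented wedges that fit together end-to-end along each cycle, covering the cross-section point an integer number~$k$ of times. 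Hence
\[
\sum_{\sigma_s\supset H}\eta_s\beta_{[\bo,P(H)]}^{(s)}=2\pi k
\]
is integer-valued on any continuous family of non-degenerate configurations and therefore locally constant under the flexion, so its differential vanishes and such~$G$ contribute nothing.

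\textbf{Main obstacle.} The technical heart is the sign bookkeeping in the second paragraph: one must reconcile the oriented-dihedral-angle convention from Section~\ref{section_def_res} (which depends on the chosen outer and inner unit normals~$\bm_i,\bn_i$ and on the prescribed positive direction of rotation) with the signs~$\eta_{s_1},\eta_{s_2}$ coming from the ambient orientation of~$\Lambda^n$ through~$V_{or}$, and handle separately the two cases in which the facets~$P(\sigma_{s_1})$ and~$P(\sigma_{s_2})$ lie on the same or on opposite sides of the hyperplane spanned by~$\bo$ and~$P(F)$. Once this case analysis is explicit, the winding-number argument in the third paragraph is purely topological, and the two steps combine to give~\eqref{eq_Sch_or}.
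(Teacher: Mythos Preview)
Your approach is correct and is precisely the one the paper has in mind: the paper's own ``proof'' is the single sentence ``The following version of Schl\"afli's formula follows immediately from Schl\"afli's formula for a simplex, formula~\eqref{eq_gen_vol}, and the above definition of oriented dihedral angles,'' and what you have written is simply a careful unpacking of that sentence---applying Schl\"afli to each cone simplex~$\Delta_s$, grouping the $(n-2)$-face contributions according to whether they lie on the surface or pass through~$\bo$, and noting that the latter contributions have locally constant signed angle sum. Your treatment is in fact considerably more explicit than the paper's, particularly the winding-number cancellation around~$[\bo,P(H)]$ and the honest acknowledgement that the sign bookkeeping at a surface face~$F$ requires a small case analysis.
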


Let us restrict equation~\eqref{eq_Sch_or} to~$\Sigma^+(\bell)$. For each~$F$, the volume~$V_F( P)$ depends only on the set~$\bell$ of edge lengths, hence, is constant on~$\Sigma^+(\bell)$; we denote it by~$V_{F,\bell}$.

\begin{lem}
For each $(n-2)$-dimensional face~$F$ of~$K,$ the restriction of the function $\exp(i\alpha_F( P))$ to the set\/~$\Sigma^+(\bell)$ coincides with the restriction of some polynomial~$Q_F( P)$ in the coordinates~$x_{v,j}$.
\end{lem}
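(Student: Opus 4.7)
The plan is to exhibit $e^{i\alpha_F(P)} = A(P) + iB(P)$ for explicit polynomials $A,B$ in the coordinates $x_{v,j}$. Let $\sigma_1,\sigma_2$ be the two $(n-1)$-simplices of $K$ containing $F$, let $w_i\in\sigma_i\setminus F$ be the two opposite vertices, and consider the $(n+1)\times(n+1)$ Gram matrix $C=C(P)$ of the vectors $\bx_u$ with $u\in\{w_1,w_2\}\cup F$. All entries of $C$ are hyperbolic cosines of edge lengths of $K$ and hence constants on $\Sigma^+(\bell)$, with the sole exception of $c_{w_1 w_2}=\langle\bx_{w_1},\bx_{w_2}\rangle=x_{w_1,0}x_{w_2,0}-\sum_{j\ge 1}x_{w_1,j}x_{w_2,j}$. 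Denoting by $I$, $I'=I\cup\{w_1\}$, $I''=I\cup\{w_2\}$ the vertex index sets of $F$, $\sigma_1$, $\sigma_2$, it follows that the minors $D_I(C),D_{I'}(C),D_{I''}(C)$ are constants on $\Sigma^+(\bell)$, while $D_{I',I''}(C)$ (linear in the single variable $c_{w_1 w_2}$) and $D(C)$ are polynomials in the coordinates.

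The oriented dihedral angle $\alpha_F(P)$ agrees, up to a fixed sign, with the geometric dihedral angle of the $n$-simplex $T$ with vertex set $\{w_1,w_2\}\cup F$ at its face $F$. Applying the first formula of~\eqref{eq_sin-alpha} to $T$ yields
$$\cos\alpha_F(P)=\frac{(-1)^{n-1}D_{I',I''}(C)}{\sqrt{D_{I'}(C)D_{I''}(C)}},$$
a polynomial in the coordinates divided by a positive real constant. For the signed $\sin\alpha_F(P)$, I would use the factorization $C=M\cdot\mathrm{diag}(1,-1,\ldots,-1)\cdot M^\top$, where $M$ is the $(n+1)\times(n+1)$ matrix with rows $\bx_{w_1}$, $(\bx_v)_{v\in F}$, $\bx_{w_2}$ in some fixed order; this gives $D(C)=(-1)^n(\det M)^2$, and $\det M$ is itself a polynomial in the coordinates whose sign records the orientation of $T$ in $\Lambda^n$. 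Combining the second formula of~\eqref{eq_sin-alpha} with Jacobi's identity~\eqref{eq_Jac} then produces
$$\sin\alpha_F(P)=\frac{\epsilon\,\det M\cdot\sqrt{|D_I(C)|}}{\sqrt{D_{I'}(C)D_{I''}(C)}}$$
for some fixed sign $\epsilon\in\{\pm 1\}$; the right-hand side is again a polynomial divided by a real constant. Setting $Q_F(P) = \cos\alpha_F(P) + i\sin\alpha_F(P)$ gives the required polynomial.

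The main obstacle is the bookkeeping of orientation signs, i.e.\ verifying that $\epsilon$ is \emph{globally} constant on $\Sigma^+(\bell)$. This amounts to comparing the convention used to define $\alpha_F(P)$ (via the outer normals $\bm_1,\bm_2$ and the ambient orientation of $\Lambda^n$) with the sign of $\det M$. One fixes $\epsilon$ at a single non-degenerate reference configuration in each connected component of $\Sigma^+(\bell)$, and continuity propagates the identity throughout, including across the degenerate locus $\det M=0$, where both sides of the formula for $\sin\alpha_F(P)$ vanish simultaneously.
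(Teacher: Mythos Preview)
Your approach is essentially the same as the paper's: form the Gram matrix $C$ of the $n+1$ vectors $\bx_u$ with $u\in\{w_1,w_2\}\cup F$, observe that the principal minors $D_I,D_{I'},D_{I''}$ are constants on~$\Sigma^+(\bell)$ while $D_{I',I''}(C)$ and $\det M$ are polynomials in the coordinates, and read off $\cos\alpha_F$ and $\sin\alpha_F$ from~\eqref{eq_sin-alpha}. The paper does exactly this, writing
\[
\sin\alpha_F(P)=\sqrt{\frac{(-1)^nD_I(C)}{D_{I'}(C)D_{I''}(C)}}\cdot\det(\bx_{v_0},\ldots,\bx_{v_n}),
\]
which is your formula with the sign $\epsilon$ made explicit.

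The one point where your writeup is weaker than the paper's is the determination of $\epsilon$. Your continuity argument fixes $\epsilon$ separately on each connected component of $\Sigma^+(\bell)$, but the lemma requires a single polynomial $Q_F$ valid on all of $\Sigma^+(\bell)$, so you must know that $\epsilon$ is the \emph{same} constant on every component. Continuity alone cannot give this. The correct fix is precisely the convention comparison you mention but do not carry out: once the vertex ordering is chosen so that $(w_2,v_2,\ldots,v_n)$ gives the positive orientation of $\sigma_2$, the definition of the oriented dihedral angle (via the outer normals and the ambient orientation of $\Lambda^n$) forces the sign of $\sin\alpha_F(P)$ to coincide with the sign of $\det(\bx_{w_1},\bx_{w_2},\bx_{v_2},\ldots,\bx_{v_n})$ at every non-degenerate configuration, independently of the component. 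The paper simply asserts this (``It follows easily from the definition of the oriented dihedral angles that\ldots''), and that is what you should do as well; the continuity argument is then unnecessary.
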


\begin{proof} 
Let $\sigma_1$ and $\sigma_2$ be the two $(n-1)$-dimensional simplices of~$K$ containing~$F$. We denote the vertices of~$\sigma_1$ and $\sigma_2$ opposite to~$F$ by~$v_0$ and~$v_1$, respectively, and the vertices of~$F$ by $v_2,\ldots,v_{n}$, where the order of vertices is chosen so that the order $v_1,v_2,\ldots,v_{n}$ yields the positive orientation of~$\sigma_2$. Let $C$ be the Gram matrix of the vectors $\bx_{v_0},\ldots,\bx_{v_n}$. It follows easily from the definition of the oriented dihedral angles that the sign of $\sin\alpha_F(P)$ coincides with the sign of the determinant $\det(\bx_{v_0},\ldots,\bx_{v_n})$. The square of this determinant is equal to $(-1)^nD(C)$. By~\eqref{eq_sin-alpha}, we obtain that 
\begin{align*}
\cos\alpha_F(P)&=\frac{(-1)^{n-1}D_{I',I''}(C)}{\sqrt{D_{I'}(C)D_{I''}(C)}}\,,\\
\sin\alpha_F(P)&=\sqrt{\frac{(-1)^nD_I(C)}{D_{I'}(C)D_{I''}(C)}}\cdot\det(\bx_{v_0},\ldots,\bx_{v_n}),
\end{align*}
where $I=\{2,\ldots,n\}$, $I'=\{0,2,\ldots,n\}$, and $I''=\{1,2,\ldots,n\}$. The minors $D_I(C)$, $D_{I'}(C)$, and~$D_{I''}(C)$ are constant on~$\Sigma^+(\bell)$, while $D_{I',I''}(C)$ and $\det(\bx_{v_0},\ldots,\bx_{v_n})$ are polynomials in the coordinates~$x_{v,j}$. Therefore the restrictions of $\cos\alpha_F(P)$ and $\sin\alpha_F(P)$ to~$\Sigma^+(\bell)$ are polynomials, hence,  the restriction of $\exp(i\alpha_F( P))$ to~$\Sigma^+(\bell)$ is also a polynomial.
\end{proof}

The polynomials~$Q_F(P)$ such that $\exp(i\alpha_F( P))=Q_F(P)$ on~$\Sigma^+(\bell)$ may be not unique. We choose and fix some polynomials satisfying these condition.

Now, let $S$ be a stratum of the standard stratification of~$\Sigma^+(\bell)$.
Since $S$ is connected and $V_F( P)=V_{F,\bell}$ are constants on~$S$, we can integrate equation~\eqref{eq_Sch_or}. We obtain that the following equality holds on~$S$:
\begin{equation}\label{eq_volume_explicit}
\CV_K( P)=\frac{i}{n-1}\sum_{F\subset K,\,\dim F=n-2}V_{F,\bell}\Log Q_F( P)+c_S,
\end{equation}
where $c_S$ is a real constant that depends on~$S$ only. This formula should be understood in the following way. Each summand $V_{F,\bell}\Log Q_F( P)$ may have non-trivial variation along a loop in~$S$. However, the whole sum in the right-hand side of~\eqref{eq_volume_explicit} has trivial variations along all loops in~$S$, since we know that the volume $\CV_K( P)$ is well defined on~$S$. Formula~\eqref{eq_volume_explicit} means that we can choose a branch of the sum in the right-hand side so that the equality will hold. (Choosing another branch, we change the constant~$c_S$ only.)

Let $\Xi$ be the Zariski closure of~$S$ in~$\C^{m(n+1)}$. By Proposition~\ref{propos_strat}, $\Xi$ is irreducible, $\dim_{\C}\Xi=\dim_{\R}S$, and $S\subset\Xi^{reg}$. Hence the analytic continuation of a function defined on~$S$ along a path in~$\Xi$ is unique whenever exists.

The angles~$\alpha_F( P)$ are well defined for all $ P\in S$. Hence the polynomials~$Q_F( P)$ do not take zero values in~$S$.  Nevertheless, the polynomials~$Q_F( P)$ may take zero values in~$\Xi$. Let $\Upsilon\subset \Xi$ be the Zariski open subset consisting of all regular points~$ P$ of~$\Xi$ such that all values $Q_F( P)$ are non-zero. The subset~$\Upsilon$ is non-empty, since it contains~$S$, hence, $\Upsilon$ is dense in~$\Xi$. We immediately obtain the following proposition.

\begin{propos}\label{propos_contin}
The function~$\CV_K( P)$ admits the analytic continuation along every path in~$\Upsilon$. The obtained multi-valued analytic function $\tCV_K( P)$ on~$\Upsilon$ is given by the formula
\begin{equation}\label{eq_volume_explicit2}
\tCV_K( P)=\frac{i}{n-1}\sum_{F\subset K,\,\dim F=n-2}V_{F,\bell}\Log Q_F( P)+c_S.
\end{equation}
\end{propos}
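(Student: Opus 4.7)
The plan is to take formula \eqref{eq_volume_explicit} itself as the key input and show that its right-hand side makes sense as a multi-valued holomorphic function on all of~$\Upsilon$. The starting observation is that each polynomial $Q_F(P)$ is nowhere zero on~$\Upsilon$ by the very definition of~$\Upsilon$, so $\Log Q_F(P)$ admits unambiguous analytic continuation along every path in~$\Upsilon$. Hence the function
$$
G(P)=\frac{i}{n-1}\sum_{F\subset K,\,\dim F=n-2}V_{F,\bell}\Log Q_F(P)
$$
is a well-defined multi-valued holomorphic function on~$\Upsilon$, and the whole proof reduces to verifying that $\CV_K(P)-c_S$ extends holomorphically from~$S$ to~$\Upsilon$ as a branch of this~$G$.

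Next I would fix a base point $P_0\in S\subset\Upsilon$ and use formula \eqref{eq_volume_explicit}, which has already been established on~$S$, to identify $\CV_K(P)-c_S$ on~$S$ with a specific branch of~$G(P)$. By Proposition~\ref{propos_strat}, $S$ is a connected open subset of~$\Xi^{reg}(\R)$ of real dimension equal to~$\dim_{\C}\Xi$, so locally it is a maximally totally real submanifold of~$\Xi^{reg}$. Consequently, the equality of two real-analytic functions on~$S$, one of which is the restriction of a holomorphic function on a complex neighborhood in~$\Xi$, propagates to an equality of holomorphic functions in such a complex neighborhood. This upgrades the initially $S$-defined expression $\CV_K(P)-c_S$ to a germ of a holomorphic function at every point of~$S$, agreeing with the chosen branch of~$G(P)$.

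Finally, analytic continuation along any path $\gamma$ in~$\Upsilon$ starting at~$P_0$ is inherited from the continuation of~$G$: since $\Upsilon$ is connected (being a dense Zariski open subset of the irreducible variety~$\Xi$), every point of~$\Upsilon$ is reached by such a~$\gamma$, and continuing the chosen branch of~$G$ along~$\gamma$ yields the analytic continuation of~$\CV_K(P)$, thereby producing formula~\eqref{eq_volume_explicit2}. I do not foresee a genuine obstacle: the substantive ingredients, namely Schl\"afli's formula~\eqref{eq_Sch_or} on~$S$ and the polynomial identity $\exp(i\alpha_F(P))=Q_F(P)$, have already been carried out, and the proposition is essentially bookkeeping around these, relying only on irreducibility of~$\Xi$, on $\Upsilon\subset\Xi^{reg}$, and on the nonvanishing of the~$Q_F$ on~$\Upsilon$. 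The only mildly delicate point is the totally real argument used to pass from real-analytic equality on~$S$ to complex-analytic equality near~$S$; this is standard but worth stating explicitly so that the identity $\CV_K(P)-c_S=G(P)$, with the branch of~$G$ determined at~$P_0$, is literally valid on a complex neighborhood before one starts continuing along paths.
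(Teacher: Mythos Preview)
Your proposal is correct and follows the same approach as the paper, which in fact simply writes ``We immediately obtain the following proposition'' after establishing formula~\eqref{eq_volume_explicit} on~$S$ and defining~$\Upsilon$ as the locus in~$\Xi^{reg}$ where all~$Q_F$ are nonzero. Your elaboration of the totally real argument is a careful way to make explicit the passage from real-analytic equality on~$S$ to holomorphic equality in a complex neighborhood, which the paper leaves implicit.
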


\begin{remark}
Formula~\eqref{eq_volume_explicit2} should be understood in the following way: For any branch of~$\tCV_K( P)$, one can choose  branches of the logarithms in the right-hand side  so that equality~\eqref{eq_volume_explicit2} will hold true. However, it is possible that not any choice of branches of the logarithms in the right-hand side of~\eqref{eq_volume_explicit2} will yield a branch of~$\tCV_K( P)$. This situation is rather common when we deal with multi-valued functions. It can be illustrated by the following simplest example: Certainly, the equality $2\sqrt{z}=\sqrt{z}+\sqrt{z}$ is true for an appropriate choice of branches of the square roots in the right-hand side. However, branches of the square roots in the right-hand side can be chosen so that their sum will equal zero rather than~$2\sqrt{z}$.
\end{remark}

Since all volumes $V_{F,\bell}$ are real constants, Proposition~\ref{propos_contin} immediately implies the following assertion.

\begin{cor}\label{cor_real_const}
For each point $ P_0\in \Upsilon,$ any two branches of the multi-valued analytic function~$\tCV_K( P)$ in a neighborhood of~$ P_0$ in~$\Upsilon$ differ by a real constant.
\end{cor}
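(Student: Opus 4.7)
The strategy is to read off the corollary directly from the explicit representation \eqref{eq_volume_explicit2} in Proposition~\ref{propos_contin}. Since $\Upsilon$ is a connected complex manifold and $P_0\in\Upsilon$, two branches $V_1(P)$ and $V_2(P)$ of $\tCV_K(P)$ near $P_0$ are obtained from one another by analytic continuation along some loop $\gamma$ in $\Upsilon$ based at $P_0$; my plan is to compute the difference $V_2-V_1$ by comparing the two branches term by term in formula~\eqref{eq_volume_explicit2}.

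First I would fix one branch, say $V_1$, and choose branches of $\Log Q_F(P)$ near $P_0$ so that \eqref{eq_volume_explicit2} holds for $V_1$ with constant $c_S$. Continuing along $\gamma$, each $\Log Q_F(P)$ returns to a branch of the form $\Log Q_F(P)+2\pi i\,k_F(\gamma)$, where $k_F(\gamma)\in\Z$ is the winding number of the image loop $Q_F(\gamma)$ around the origin in $\C$. This is unambiguous precisely because $Q_F$ takes no zero values on $\Upsilon$, and $c_S$ is a constant so it is unaffected by analytic continuation. Plugging back into \eqref{eq_volume_explicit2}, we find that the branch $V_2$ is represented by the same right-hand side but with each $\Log Q_F(P)$ replaced by $\Log Q_F(P)+2\pi i\,k_F(\gamma)$, so
\begin{equation*}
V_2(P)-V_1(P)=\frac{i}{n-1}\sum_{F\subset K,\,\dim F=n-2}V_{F,\bell}\cdot 2\pi i\,k_F(\gamma)=-\frac{2\pi}{n-1}\sum_{F\subset K,\,\dim F=n-2}k_F(\gamma)V_{F,\bell}.
\end{equation*}

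Since each $V_{F,\bell}$ is a real constant and each $k_F(\gamma)$ is an integer, the right-hand side is a real constant, independent of~$P$, which is exactly the assertion of the corollary. There is no real obstacle here; the only point that deserves care is the mild ambiguity already flagged in the remark after Proposition~\ref{propos_contin}, namely that not every choice of branches of the $\Log Q_F(P)$ on the right of \eqref{eq_volume_explicit2} yields an actual branch of $\tCV_K(P)$. This is harmless for us: we only need \emph{some} admissible choice of logarithm branches realizing $V_1$ and $V_2$, and the analytic-continuation procedure along $\gamma$ automatically produces compatible choices for both sides, so the cancellation above is legitimate.
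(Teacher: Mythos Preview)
Your argument is correct and is exactly the approach the paper takes: the paper simply states that since all $V_{F,\bell}$ are real constants, the corollary follows immediately from Proposition~\ref{propos_contin}, and you have spelled out the one-line monodromy computation behind this. Your care about the branch-choice ambiguity from the remark is appropriate but, as you note, harmless here.
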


Take $\bo=(1,0,\ldots,0)\in\Lambda^n\subset\R^{1,n}$, and consider formula~\eqref{eq_gen_vol}. 
For every $s=1,\ldots,q$, let $C^{(s)}=\bigl(c_{jk}^{(s)}\bigr)$ be the Gram matrix of the vertices of the simplex $\bigl[\bo\, P\bigl(v_1^{(s)}\bigr)\ldots P\bigl(v_n^{(s)}\bigr)\bigr]$. For each  $I\subset \{0,\ldots,n\}$, we put $D_I^{(s)}=D_I(C^{(s)})$. To indicate the dependence of the matrices~$C^{(s)}$ and the minors~$D_I^{(s)}$ on~$ P$, we shall sometimes write $C^{(s)}( P)$ and $D_I^{(s)}( P)$, respectively.

All elements of all matrices $C^{(s)}$ are polynomials in the coordinates~$x_{v,j}$, hence, are regular functions on~$\Xi$. Therefore all minors~$D_I^{(s)}$ are also regular functions on~$\Xi$. Let $\Omega\subset\Xi$ be the Zariski open subset consisting of all regular points~$P$ of~$\Xi$ such that all minors~$D_I^{(s)},$ where $s=1,\ldots,q,$  $I\subset\{0,\ldots,n\},$ are non-zero at~$ P$. 

\begin{lem}\label{lem_nonzero}
The subset $\Omega\cap S$ is dense in~$S$ \textnormal{(}in the analytic topology\textnormal{)}.
\end{lem}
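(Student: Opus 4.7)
The plan is to exploit that $\Omega$ is Zariski open in the irreducible variety $\Xi$, so its complement in $S$ is the union of the finitely many real-analytic subsets $\{P\in S:D_I^{(s)}(P)=0\}$. Since $S$ is a connected real-analytic manifold by Proposition~\ref{propos_strat}, the identity principle for real-analytic functions reduces the desired density of $\Omega\cap S$ in~$S$ to producing a single point of $S$ at which every minor $D_I^{(s)}$ is non-zero.

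To produce such a point I would use the action of $G^0$, the identity component of the isometry group $O(1,n)$ of $\R^{1,n}$, on $(\R^{1,n})^m=\R^{m(n+1)}$ by the diagonal action. This action preserves equations~\eqref{eq_vL}, \eqref{eq_uv} and the inequalities~\eqref{eq_vL+}, so $G^0$ acts by regular automorphisms of $\Sigma^+(\bell)$ and preserves the standard stratification. Since $G^0$ is connected, each stratum is $G^0$-invariant; in particular, for any $P_0\in S$ the orbit $G^0\cdot P_0$ lies entirely in~$S$. Hence it is enough to find some $g\in G^0$ for which all $D_I^{(s)}(g\cdot P_0)$ are non-zero.

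A case split then completes the argument. When $0\notin I$, the minor $D_I^{(s)}$ is the Gram determinant of $\{P(v_j^{(s)}):j\in I\}$ and is $G^0$-invariant; since $P_0$ has non-degenerate faces, these vectors span a non-degenerate subspace of $\R^{1,n}$ of signature $(1,|I|-1)$, so $D_I^{(s)}$ is a non-zero constant along the orbit. When $0\in I$, write $I=\{0\}\cup J$ and let $W_J\subset\R^{1,n}$ be the linear span of $\{P_0(v_j^{(s)}):j\in J\}$. A short computation using the decomposition $\R^{1,n}=W_J\oplus W_J^\perp$ with $W_J^\perp$ negative definite shows that $D_I^{(s)}(g\cdot P_0)$ vanishes precisely when $g^{-1}\bo$ lies in $W_J\cap\Lambda^n$, which is a proper hyperbolic subspace of $\Lambda^n$ of dimension $|J|-1<n$. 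Since $G^0$ acts transitively on~$\Lambda^n$, the map $g\mapsto g^{-1}\bo$ is a smooth surjection, and the pre-image of the finite union of these bad subspaces is a proper closed subset of~$G^0$; any $g$ outside it gives $g\cdot P_0\in\Omega\cap S$. The only non-formal input is the signature assertion for the span of vertices of a non-degenerate hyperbolic simplex, which is immediate from the vector-model description of~$\Lambda^n$.
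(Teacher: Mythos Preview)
Your proof is correct and follows essentially the same strategy as the paper: both exploit that the identity component of $\mathrm{Isom}(\Lambda^n)$ acts on $\Sigma^+(\bell)$ preserving each stratum, and then use this action to move a given point $P_0\in S$ into~$\Omega$. The differences are only in packaging. The paper observes directly that for any $P_0\in S$ one can choose $h\in\mathrm{Isom}^+(\Lambda^n)$ arbitrarily close to the identity so that each full $n$-simplex $[\bo\,hP_0(v_1^{(s)})\ldots hP_0(v_n^{(s)})]$ is non-degenerate; since a non-degenerate hyperbolic simplex has all proper faces non-degenerate, this forces every $D_I^{(s)}$ to be non-zero and gives density immediately. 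You instead first reduce density to non-emptiness via the identity principle, and then carry out a more explicit case split on whether $0\in I$, arriving at the same conclusion by showing that the bad locus in $G^0$ is the pre-image of a finite union of proper hyperbolic subspaces. Both arguments are sound; the paper's is slightly more economical because the single observation ``make the top simplex non-degenerate'' handles all the minors at once.
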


\begin{proof}
 The isometry group~$\mathrm{Isom}(\Lambda^n)$ acts naturally on~$\Sigma^+(\bell)$. This action is compatible with the standard stratification, that is, the image of any stratum under the transformation given by each element of~$\mathrm{Isom}(\Lambda^n)$ is again a stratum. Since the subgroup $\mathrm{Isom}^+(\Lambda^n)$  of orientation-preserving isometries is connected and the number of strata is finite, we obtain that every stratum is invariant under the action of~$\mathrm{Isom}^+(\Lambda^n)$. Let $ P_0$ be a point in~$S$. Then the $(n-1)$-simplices $\bigl[ P_0\bigl(v_1^{(s)}\bigr)\ldots P_0\bigl(v_n^{(s)}\bigr)\bigr]$ are non-degenerate, $s=1,\ldots,q$. Therefore, there exists an isometry $h\in\mathrm{Isom}^+(\Lambda^n)$ arbitrarily close to the identity isometry such that, for $ P=h\circ P_0$, the $n$-simplices $\bigl[\bo\, P\bigl(v_1^{(s)}\bigr)\ldots  P\bigl(v_n^{(s)}\bigr)\bigr]$ are non-degenerate, $s=1,\ldots,q$. Then $ P\in \Omega\cap S$.
\end{proof}

In particular, we see that $\Omega$ is non-empty. By Proposition~\ref{propos_strat}, $\Xi$ is irreducible. Therefore, $\Omega$ is a dense Zariski open subset of~$\Xi$.

 \begin{propos}\label{propos_one_val}
If the dimension $n$ is odd, then the function $\tCV_K( P)$ in Proposition~\ref{propos_contin} is a single-valued holomorphic function on~$\Upsilon$.
\end{propos}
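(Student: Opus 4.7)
The plan is to show that the monodromy of $\tCV_K$ along every loop in~$\Upsilon$ is trivial. First I would reduce to loops in $\Omega \cap \Upsilon$: both $\Omega$ and $\Upsilon$ are dense Zariski open subsets of the irreducible variety~$\Xi$, so $\Upsilon \setminus \Omega$ has complex codimension at least~$1$ in~$\Upsilon$ (hence real codimension~$\ge 2$). Every loop in~$\Upsilon$ is therefore homotopic in~$\Upsilon$ to a loop in $\Omega \cap \Upsilon$, and one may choose a base point $P_0 \in \Omega \cap S$, non-empty by Lemma~\ref{lem_nonzero} and contained in~$\Upsilon$ because $|Q_F(P)| = 1$ for $P \in S$.

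Near $P_0$ one has the decomposition
\begin{equation*}
\CV_K(P) = \sum_{s=1}^q \epsilon_s\, V_{\Lambda^n}\bigl(C^{(s)}(P)\bigr),
\end{equation*}
where $\epsilon_s \in \{+1,-1\}$ is the locally constant sign of $\det\bigl(\bx_{\bo}, \bx_{P(v_1^{(s)})}, \ldots, \bx_{P(v_n^{(s)})}\bigr)$, which does not vanish on~$\Omega$. Analytic continuation along a loop $\gamma$ in $\Omega \cap \Upsilon$ based at~$P_0$ continues each summand independently, and the value at~$P_0$ after continuation equals $\epsilon_s$ times the continued branch $V_2^{(s)}$ of $\tV_{\Lambda^n}$ along $\gamma^{(s)} = \pr^{(s)} \circ \gamma$, which is a loop in $\FX_{(n)} = \CG_{(n)}(\C) \setminus X$ because all principal minors $D^{(s)}_I$ are nonzero on~$\Omega$.

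The main, and really the only, obstacle is to verify the parity claim that $\lk(\gamma^{(s)}, \CH)$ is always \emph{even}. I would argue as follows: if $M^{(s)}(P)$ denotes the $(n+1) \times (n+1)$ matrix whose columns are $\bx_{\bo}, \bx_{P(v_1^{(s)})}, \ldots, \bx_{P(v_n^{(s)})}$, then the pseudo-Euclidean Gram relation gives $D\bigl(C^{(s)}(P)\bigr) = (-1)^n\bigl(\det M^{(s)}(P)\bigr)^2$, and $\det M^{(s)}$ is a polynomial in the coordinates $x_{v,j}$ that is nowhere vanishing on~$\Omega$. The winding number of $D \circ C^{(s)}$ around~$0$ along $\gamma$, which is $\lk(\gamma^{(s)}, \CH)$, therefore equals twice the winding of $\det M^{(s)}$ along~$\gamma$, and is thus even.

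With this in hand, Theorem~\ref{theorem_key} applied to $\gamma^{(s)}$ (with $C^{(s)}(P_0) \in \CC_{\Lambda^n}$ and $(-1)^{\lk(\gamma^{(s)}, \CH)} = 1$) gives $V_2^{(s)} - V_{\Lambda^n}\bigl(C^{(s)}(P_0)\bigr) \in i\R$ for every~$s$. Summing with the real coefficients $\epsilon_s$ yields $\tCV_K^{(\gamma)}(P_0) - \CV_K(P_0) \in i\R$, while Corollary~\ref{cor_real_const} forces the same difference to be real. Hence it vanishes, so $\tCV_K$ has trivial monodromy on $\Omega \cap \Upsilon$ and therefore on~$\Upsilon$, completing the proof.
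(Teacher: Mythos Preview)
Your proof is correct and follows the same overall strategy as the paper's: reduce to loops in $\Omega\cap\Upsilon$, use the simplex decomposition~\eqref{eq_decomp}, apply Theorem~\ref{theorem_key} to each summand, and combine with Corollary~\ref{cor_real_const} to force the monodromy constant to be both real and purely imaginary.

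The one genuine difference is how you handle the parity of $\lk(\gamma^{(s)},\CH)$. The paper does not attempt to show this linking number is even for~$\gamma$ itself; instead it replaces~$\gamma$ by~$\gamma^2$, so that $C^{(s)}(\gamma^2)$ automatically has even linking number with~$\CH$, and then uses $c_{\gamma^2}=2c_{\gamma}$. Your argument is more direct: from $C^{(s)}=(M^{(s)})^T J M^{(s)}$ with $J=\mathrm{diag}(1,-1,\ldots,-1)$ you get $D\bigl(C^{(s)}(P)\bigr)=(-1)^n\bigl(\det M^{(s)}(P)\bigr)^2$, and since $\det M^{(s)}$ is a regular function that is nowhere zero on~$\Omega$, the winding number of $D\circ C^{(s)}$ along~$\gamma$ is twice that of $\det M^{(s)}$, hence even. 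This avoids the doubling trick and exploits the specific structure of the Gram matrix in the vector model; the paper's $\gamma^2$ device is slightly more robust in that it does not use this factorization, but your route is shorter.
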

 
\begin{proof}
 
Take any point $ P_0\in \Omega\cap S$. All simplices $\bigl[\bo\, P_0\bigl(v_1^{(s)}\bigr)\ldots  P_0\bigl(v_n^{(s)}\bigr)\bigr]$ are non-deg\-e\-ne\-rate, $s=1,\ldots,q$.  Put $\varepsilon_s=1$ whenever the simplex $\bigl[\bo\, P_0\bigl(v_1^{(s)}\bigr)\ldots  P_0\bigl(v_n^{(s)}\bigr)\bigr]$ is positively oriented, and $\varepsilon_s=-1$ whenever the simplex $\bigl[\bo\, P_0\bigl(v_1^{(s)}\bigr)\ldots  P_0\bigl(v_n^{(s)}\bigr)\bigr]$ is negatively oriented. 
Then, for  $ P$ in a small neighborhood of~$ P_0$ in~$S$, formula~\eqref{eq_gen_vol} takes the form
\begin{equation*}
\CV_K( P)=\sum_{s=1}^q\varepsilon_sV\bigl(\bo, P\bigl(v_1^{(s)}\bigr),\ldots, P\bigl(v_n^{(s)}\bigr)\bigr),
\end{equation*}
where  $V(v_0,\ldots, v_n)$ denotes the \textit{unoriented\/} volume of the simplex with vertices $v_0,\ldots, v_n$. Using the function~$V_{\Lambda^n}(C)$, this can be rewritten as
\begin{equation}\label{eq_decomp}
\CV_K( P)=\sum_{s=1}^q\varepsilon_sV_{\Lambda^n}\bigl(C^{(s)}( P)\bigr).
\end{equation}

By Proposition~\ref{propos_contin}, the function~$\CV_K( P)$ is real analytic on~$S$ and admits the analytic continuation along any path $\gamma$ in~$\Upsilon$ starting at~$P_0$. Since $ P_0$ is a regular point of~$\Xi$ and $\dim_{\C}\Xi=\dim_{\R}S$, we see that the analytic continuation of~$\CV_K( P)$ along~$\gamma$ is unique. To prove that the obtained analytic function~$\tCV_K( P)$ is single-valued, we need to prove that the analytic continuation~$\CV_K^{(\gamma)}( P)$ of~$\CV_K( P)$ along any loop~$\gamma$ in~$\Upsilon$ with endpoints at~$ P_0$ is again the same function~$\CV_K( P)$. Moreover, we need to prove this for loops~$\gamma$ contained in~$\Upsilon\cap\Omega$ only. Indeed, since $\Omega$ is a dense Zariski open subset of~$\Xi$,  any loop in~$\Upsilon$ is homotopic to a loop in~$\Upsilon\cap\Omega$.

Let $\gamma$ be a loop in~$\Upsilon\cap\Omega$ with endpoints at~$ P_0$.  By Corollary~\ref{cor_real_const}, we have $\CV_{K}^{(\gamma)}( P)=\CV_K( P)+c_{\gamma}$ for a real constant~$c_{\gamma}$. Let $\gamma^2$ be the loop  going twice along~$\gamma$. Then  $\CV_{K}^{(\gamma^2)}( P)=\CV_K( P)+2c_{\gamma}$. For each~$s$, consider the path $C^{(s)}(\gamma^2)$. It is a closed path in~$\CG_{(n)}(\C)\setminus X$  that has even linking numbers with all irreducible components of~$X$.  Continuing analytically both sides of~\eqref{eq_decomp} along~$\gamma^2$, we obtain that
 \begin{equation*}
\CV_K( P)+2c_{\gamma}=\sum_{s=1}^q\varepsilon_sV^{\left(C^{(s)}(\gamma^2)\right)}_{\Lambda^n}\bigl(C^{(s)}( P)\bigr)
\end{equation*}
for $ P$ in a neighborhood of~$ P_0$. In particular, this equality holds for $ P= P_0$.
Therefore,
\begin{equation}\label{eq_2c}
2c_{\gamma}=\sum_{s=1}^q\varepsilon_s\left(V_{\Lambda^n}^{\left(C^{(s)}(\gamma^2)\right)}\bigl(C^{(s)}( P_0)\bigr)-V_{\Lambda^n}\bigl(C^{(s)}( P_0)\bigr)\right).
\end{equation}
For each $s$, the matrix $C^{(s)}( P_0)$ belongs to~$\CC_{\Lambda^n}$, since it is the Gram matrix of the vertices of the non-degenerate simplex $\bigl[\bo\, P_0\bigl(v_1^{(s)}\bigr)\ldots  P_0\bigl(v_n^{(s)}\bigr)\bigr]$. Since the linking number of~$C^{(s)}(\gamma^2)$ and the hypersurface~$\CH$ is even, Theorem~\ref{theorem_key} yields that all summands in the right-hand side of~\eqref{eq_2c} are purely imaginary. But $c_{\gamma}\in\R$. Hence $c_{\gamma}=0$, which completes the proof of the proposition.
\end{proof}

We need the following consequence of Liouville's theorem on entire functions. It seems to be standard. However, for the convenience of the reader, we shall give a proof of it in Section~\ref{section_Liouville}. 

\begin{lem}\label{lem_log}
Let $A$ be a smooth irreducible complex affine variety, and let $\varphi$ be a holomorphic function on~$A$. Assume that there exist regular functions $f_1,\ldots,f_N\in\C[A]$ such that 
\begin{equation}\label{eq_estimate}
\Im \varphi(x)\le \max_{n=1,\ldots,N} \log|f_n(x)|
\end{equation}
for all $x\in A$.  Then $\varphi$ is a constant. \textnormal{(}Here we use the convention $\log 0=-\infty$.\textnormal{)}
\end{lem}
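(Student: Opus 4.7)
My plan is to reduce to the case $\dim A = 1$ and then analyze $g = e^{-i\varphi}$ on the projective compactification. For the reduction, if the conclusion failed for some higher-dimensional $A$, one could find $x_0, x_1 \in A$ with $\varphi(x_0)\ne\varphi(x_1)$, and then an irreducible algebraic curve $C\subset A$ passing through both (a standard fact for irreducible quasi-projective varieties, provable by iterated generic hyperplane sections). Its normalization $\tilde C$ is a smooth irreducible affine curve; the pullback of $\varphi$ to $\tilde C$ is holomorphic, the pullbacks of the $f_n$ are regular, and inequality~\eqref{eq_estimate} persists. The curve case would force $\varphi|_{\tilde C}$ to be constant, a contradiction.

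\textbf{Extending $g$ to the compactification.} Assume now that $A$ is a smooth irreducible affine curve with smooth projective compactification $\bar A$, and set $B = \bar A\setminus A$, a finite set. Put $g = e^{-i\varphi}$, a holomorphic, nowhere vanishing function on $A$ satisfying
\begin{equation*}
|g(x)| = e^{\Im\varphi(x)}\le \max_{n=1,\ldots,N}|f_n(x)|.
\end{equation*}
Since each $f_n$ is rational on $\bar A$ with poles only at points of $B$, the function $|g|$ has at most polar growth near every $p \in B$; by Riemann's removable singularity theorem, $g$ extends to a rational function on $\bar A$ whose zeros and poles all lie in $B$.

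\textbf{Residue analysis and conclusion.} Because $\varphi$ is single-valued on $A$, so is $\log g = -i\varphi$, and therefore the meromorphic $1$-form $\omega = dg/g$ is exact on $A$. For each $p\in B$, integrating over a small positively oriented loop $\gamma_p\subset A$ around $p$ yields
\begin{equation*}
0 = \int_{\gamma_p}\omega = 2\pi i\cdot\mathrm{ord}_p(g),
\end{equation*}
so $\mathrm{ord}_p(g)=0$. Combined with the nowhere vanishing of $g$ on $A$, this means $g$ has neither zeros nor poles anywhere on $\bar A$; being a holomorphic nowhere-vanishing function on the compact Riemann surface $\bar A$, it must be a constant by the maximum principle, whence $\varphi$ is constant. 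The main obstacle is precisely this residue computation: single-valuedness of $\varphi$ must be translated into the vanishing of all orders of $g$ at the boundary points $B$, so that $g$ extends to a regular nowhere-vanishing function on the compactification. Once that is in place, the maximum principle finishes the argument, and the reduction to curves is routine.
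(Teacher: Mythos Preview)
Your proof is correct, but it follows a genuinely different route from the paper's. The paper works in arbitrary dimension throughout: it chooses a finite regular map $F\colon A\to\C^d$ (Noether normalization), forms the elementary symmetric functions $\Psi_j$ of the values of $\psi=e^{-i\varphi}$ on the fibres of~$F$, shows these are entire on~$\C^d$ with polynomial growth (hence polynomials by Liouville), and concludes that $\psi$ is integral over~$\C[A]$ and therefore regular; a separate lemma (Lemma~\ref{lem_multi_log}) then says that a non-constant nowhere-vanishing regular function on an irreducible affine variety admits no single-valued holomorphic logarithm, forcing $\psi$ constant. You instead reduce immediately to $\dim A=1$ by cutting down to a curve through two points with distinct $\varphi$-values and normalizing, then compactify and use the growth bound to extend $g=e^{-i\varphi}$ meromorphically to~$\bar A$; the single-valuedness of $\log g$ on~$A$ forces $\mathrm{ord}_p(g)=0$ at every puncture, so $g$ is a nowhere-vanishing holomorphic function on a compact Riemann surface and hence constant. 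Your argument is more elementary---it avoids the symmetric-function/Noether-normalization machinery and Liouville on~$\C^d$, working directly with one puncture at a time on a compact Riemann surface---while the paper's approach has the conceptual advantage of exhibiting $\psi$ as a regular function on~$A$ without ever leaving the original variety. One small remark: the step you attribute to ``Riemann's removable singularity theorem'' is really the standard pole-vs-essential-singularity criterion (bound $|g|\le C|t|^{-k}$ and apply Riemann to $t^kg$), but the content is of course correct.
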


\begin{propos}\label{propos_const1}
If the dimension~$n$ is odd, then the function~$\tCV_K(P)$ is constant on~$\Upsilon$. Hence the function~$\CV_K(P)$ is constant on~$S$.
\end{propos}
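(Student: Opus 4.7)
The plan is to combine the explicit formula for $\tCV_K$ on $\Upsilon$ given by Proposition~\ref{propos_contin} with the Liouville-type Lemma~\ref{lem_log}. First, I would take imaginary parts in the identity
\begin{equation*}
\tCV_K(P)=\frac{i}{n-1}\sum_{F\subset K,\,\dim F=n-2}V_{F,\bell}\Log Q_F(P)+c_S.
\end{equation*}
Since $\Im(i\Log z)=\log|z|$ is unambiguous, and since $V_{F,\bell}$ and $c_S$ are real, the branch ambiguity of the logarithms disappears, yielding the clean identity
\begin{equation*}
\Im\tCV_K(P)=\frac{1}{n-1}\sum_{F\subset K,\,\dim F=n-2}V_{F,\bell}\log|Q_F(P)|.
\end{equation*}

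Next I would convert this equality into the estimate required by Lemma~\ref{lem_log}. Using $\log|Q_F|\le\max(\log|Q_F|,0)$ together with the identity $\alpha\max(x,0)=\max(\alpha x,0)$ for $\alpha>0$, and choosing a positive integer $N$ so large that $N\ge kV_{F,\bell}/(n-1)$ for every $(n-2)$-dimensional face $F$ of $K$ (where $k$ denotes the total number of such faces), one obtains
\begin{equation*}
\Im\tCV_K(P)\le\max\bigl(\log 1,\,\log|Q_{F_1}(P)^N|,\,\ldots,\,\log|Q_{F_k}(P)^N|\bigr).
\end{equation*}
Since each $Q_{F_j}$ is a polynomial in the ambient coordinates on $\C^{m(n+1)}$, the functions $1,Q_{F_1}^N,\ldots,Q_{F_k}^N$ are regular on $\Xi$ and on any subvariety.

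The main remaining obstacle is that Lemma~\ref{lem_log} requires a smooth irreducible \emph{affine} ambient variety, but $\Upsilon$ itself need not be affine, since $\Xi^{sing}$ can have complex codimension greater than one in $\Xi$. To handle this, I would pick a nonzero regular function $g\in\C[\Xi]$ vanishing on the proper closed subvariety $\Xi^{sing}$ (which exists by irreducibility of $\Xi$) and set
\begin{equation*}
A=\left\{P\in\Xi:g(P)\prod_F Q_F(P)\neq 0\right\}\subset\Upsilon.
\end{equation*}
As a principal Zariski open of the affine variety $\Xi$, the set $A$ is affine; it lies in $\Xi^{reg}$, hence is smooth; and it is irreducible and nonempty as a nonempty open subset of the irreducible $\Xi$. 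The restriction $\tCV_K|_A$ is holomorphic by Proposition~\ref{propos_one_val} and satisfies the hypothesis of Lemma~\ref{lem_log} with the $f_n$'s constructed above, so it is constant on $A$. Finally, $\Upsilon$ is connected, being obtained from the connected complex manifold $\Xi^{reg}$ by removing a proper complex hypersurface, so the identity principle extends the constancy from the nonempty open set $A\subset\Upsilon$ to all of $\Upsilon$. In particular, $\CV_K=\tCV_K|_S$ is constant on $S$, which is the desired conclusion.
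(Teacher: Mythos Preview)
Your proof is correct and follows essentially the same route as the paper's: take imaginary parts in formula~\eqref{eq_volume_explicit2} to obtain a logarithmic upper bound for $\Im\tCV_K$, then apply Lemma~\ref{lem_log} on a principal Zariski open subset of~$\Xi$ contained in~$\Upsilon$. The paper is slightly terser---it simply picks a principal open $\Xi_f\subset\Upsilon$ without writing it down explicitly and then uses density of $\Xi_f$ in~$\Upsilon$ to conclude---whereas you construct $A$ explicitly and invoke connectedness of~$\Upsilon$ and the identity principle; both arguments are valid and amount to the same thing.
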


\begin{proof}
By Proposition~\ref{propos_one_val}, $\tCV_K( P)$ is a holomorphic function on~$\Upsilon$.
By~\eqref{eq_volume_explicit2}, we have the following estimate 
\begin{multline*}
\Im\tCV_K( P)=\frac{1}{n-1}\sum_{F\subset K,\,\dim F=n-2}V_{F,\bell}\log|Q_F( P)|\\
\le \max_{F\subset K,\,\dim F=n-2}\frac{NV_{F,\bell}\log|Q_F( P)|}{n-1}
{}\le\max\left(0, 
\max_{F\subset K,\,\dim F=n-2}\log\left|Q_F( P)^L\right|\right),
\end{multline*}
where $N$ is the number of $(n-2)$-dimensional simplices of~$K$, and $L$ is a positive integer that is greater than all numbers $NV_{F,\bell}/(n-1)$.
Recall that $Q_F( P)$ are regular functions on~$\Upsilon$. Take a principal Zariski open subset $\Xi_f\subset\Xi$ such that $\Xi_f\subset\Upsilon$.  (Recall that a Zariski open subset of an irreducible affine variety is called \textit{principal\/} if it is the complement of the set of zeros of a regular function.) Then  $\Xi_f$ is an irreducible affine variety. Besides, $\Xi_f$ is smooth, since $\Upsilon\subset\Xi^{reg}$. Applying Lemma~\ref{lem_log} to the restriction of the function~$\tCV_K( P)$ to~$\Xi_f$, we obtain that ~$\tCV_K( P)$ is  constant on~$\Xi_f$, hence, it is constant on~$\Upsilon$. 
\end{proof}

Since the standard stratification of~$\Sigma^+(\bell)$ consists of finitely many strata, Proposition~\ref{propos_const1} implies that the function~$\CV_K(P)$ is constant on every connected component of~$\Sigma^+(\bell)$. Thus Proposition~\ref{propos_const0} and, hence, Theorem~\ref{theorem_main2} hold true for polyhedra with non-degenerate faces.

\begin{remark}\label{remark_TMC}
Substituting the oriented dihedral angles~$\alpha_F$ to formula~\eqref{eq_TMC}, we obtain the definition of the \textit{total mean curvature\/} of any polyhedron $P\colon K\to\Lambda^n$ with non-degenerate faces. Since the angles $\alpha_F$ are defined modulo~$2\pi\Z$, the total mean curvature is defined modulo the subgroup of~$\R$ generated by the numbers $2\pi V_F(P)$ for all $(n-2)$-dimensional faces of~$K$. However, for any flexible polyhedron $P_t\colon K\to\Lambda^n$, we can choose a real-valued branch of the total mean curvature of~$P_t$. Theorem~\ref{theorem_main2} and Schl\"afli's formula~\eqref{eq_Sch_or} immediately imply the following assertion, which is a more precise formulation of Corollary~\ref{cor_TMC}. 
\end{remark}

\begin{cor}
Suppose that $n$ is odd. Let $K$ be an oriented $(n-1)$-dimensional pseudo-manifold, and let $P_t\colon K\to\Lambda^n$ be a flexion of a bounded polyhedron of combinatorial type~$K$ with non-degenerate faces. Then the total mean curvature of~$P_t$ is constant.
\end{cor}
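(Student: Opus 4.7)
The plan is to derive this corollary as a direct consequence of Theorem~\ref{theorem_main2} combined with Schl\"afli's formula~\eqref{eq_Sch_or}. Fix the flexion $P_t\colon K\to\Lambda^n$, $t\in(a,b)$. For each $(n-2)$-dimensional face $F$ of~$K$, the oriented dihedral angle $\alpha_F(P_t)$ depends continuously on~$t$ as an element of $\R/2\pi\Z$, and since $(a,b)$ is simply connected, we may lift these to continuous real-valued branches $\alpha_F(P_t)\in\R$; any two such lifts differ by an element of $2\pi\Z$, so the validity of a constancy statement does not depend on the choice. Define
\begin{equation*}
\mathrm{TMC}(P_t)=\sum_{F\subset K,\,\dim F=n-2}V_F(P_t)\bigl(\pi-\alpha_F(P_t)\bigr).
\end{equation*}

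Next, I would observe that since every edge length of $P_t$ is constant during the flexion and each face $P_t(F)$ is a simplex congruent to itself during the flexion, the $(n-2)$-volume $V_F(P_t)$ is a constant $V_{F,\bell}\in\R$. Differentiating along the flexion therefore yields
\begin{equation*}
\frac{d\,\mathrm{TMC}(P_t)}{dt}=-\sum_{F\subset K,\,\dim F=n-2}V_{F,\bell}\,\frac{d\alpha_F(P_t)}{dt},
\end{equation*}
and by Schl\"afli's formula~\eqref{eq_Sch_or} applied along the flexion, the right-hand side equals $(n-1)\,d\CV_K(P_t)/dt$. Consequently
\begin{equation*}
\frac{d\,\mathrm{TMC}(P_t)}{dt}=(n-1)\,\frac{d\CV_K(P_t)}{dt}.
\end{equation*}

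Finally, I would invoke Theorem~\ref{theorem_main2}: since $n$ is odd, the generalized oriented volume $\CV_K(P_t)$ is constant in $t$, so its derivative vanishes identically on $(a,b)$. Therefore $d\,\mathrm{TMC}(P_t)/dt\equiv 0$, and $\mathrm{TMC}(P_t)$ is constant during the flexion. There is essentially no obstacle here; every ingredient is already in place. The only minor subtlety is the branch choice for the $\alpha_F$, handled above, and the observation that the hypothesis ``non-degenerate faces'' is exactly what is needed for the oriented dihedral angles $\alpha_F(P_t)$, and hence $\mathrm{TMC}(P_t)$, to be defined and for Schl\"afli's formula in the form of Lemma~\ref{lem_Schlaefli} to apply throughout the flexion.
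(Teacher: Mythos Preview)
Your proof is correct and follows exactly the approach the paper indicates in Remark~\ref{remark_TMC}: the corollary is an immediate consequence of Theorem~\ref{theorem_main2} and Schl\"afli's formula~\eqref{eq_Sch_or}. You have simply written out the details (the lift of the angles and the sign check) that the paper leaves implicit.
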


\section{Proof of Lemma~\ref{lem_log}}\label{section_Liouville}

\begin{lem}\label{lem_multi_log}
Let $A$ be an irreducible complex affine variety, and let $f$ be a non-constant regular function on~$A$ that does not take zero value. Then  the multi-valued function $\Log f$ does not have a single-valued holomorphic branch on~$A$.  
\end{lem}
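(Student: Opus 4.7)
The plan is to argue by contradiction. Assume $g\in\CO(A)$ is a single-valued holomorphic branch of $\Log f$, so that $e^{g}=f$ on~$A$; I will show this forces $f$ to be constant. The strategy is to reduce to the case of a smooth irreducible affine curve and then invoke the residue theorem on its projective completion.

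Since $f$ is non-constant on the irreducible variety $A$, there exist points $a_1,a_2\in A$ with $f(a_1)\ne f(a_2)$. By a standard Bertini-type argument---passing to the normalization of~$A$, embedding it in some~$\C^N$, and cutting by a generic pencil of affine-linear subspaces through $a_1$ and~$a_2$---one obtains a smooth irreducible affine curve $\widetilde B$ together with a morphism $\varphi\colon\widetilde B\to A$ whose image contains both $a_1$ and~$a_2$. Pulling $f$ and~$g$ back via $\varphi$ produces a non-constant, nowhere-vanishing regular function $\tilde f$ on~$\widetilde B$ that admits a single-valued holomorphic branch of $\Log\tilde f$.

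Let $\overline B$ be the smooth projective completion of~$\widetilde B$. Then $\overline B$ is a compact Riemann surface, the set $P=\overline B\setminus\widetilde B$ is a non-empty finite set, and $\tilde f$ extends to a meromorphic function $\bar f\colon\overline B\to\mathbb{CP}^1$ whose zeros and poles all lie in~$P$. The existence of a single-valued branch of $\Log\tilde f$ is equivalent to the exactness of the meromorphic $1$-form $\omega=d\bar f/\bar f$ on~$\widetilde B$, that is, to the vanishing of $\int_{\gamma}\omega$ for every loop~$\gamma$ in~$\widetilde B$. Applying this to a small loop encircling a point $p\in P$, the residue theorem gives $2\pi i\,\mathrm{ord}_{p}(\bar f)=0$, so $\bar f$ has neither a zero nor a pole at~$p$. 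Since $p$ was arbitrary, $\bar f$ is a non-vanishing holomorphic function on the compact Riemann surface~$\overline B$, hence constant---contradicting the non-constancy of~$\tilde f$.

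The only delicate step is the initial reduction to a smooth affine curve on which $f$ remains non-constant; this is the main obstacle, but it is a routine application of Bertini's theorem (alternatively, one can induct on $\dim A$ by slicing with hyperplanes). Once the problem has been localized to a curve, the residue-theoretic argument on the compactification is immediate.
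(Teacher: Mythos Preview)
Your argument is correct, but it follows a somewhat different path from the paper's.

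For the reduction to a curve, the paper works inside $A$ itself: it picks a regular point $x\in A$ with $df|_x\ne 0$ and a tangent direction $\xi$ with $\langle df,\xi\rangle\ne 0$, then takes $\Gamma$ to be an irreducible component through $x$ of a generic plane section $A\cap\Pi$ with $T_xA\cap T_x\Pi=\langle\xi\rangle$. This avoids any appeal to normalization or Bertini and gives a curve contained in $A$ on which $f$ is visibly non-constant. Your Bertini/normalization reduction is equally valid, just slightly heavier machinery; the paper's slice is more elementary but requires noticing that $df$ is nonzero somewhere on $A^{reg}$.

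For the curve case itself the two arguments are genuinely different. The paper never compactifies: it views $f\colon A\to\C$ as a branched cover, removes a finite set $B\subset\C$ (including $0$) to obtain an honest finite covering, and observes that any connected component $\gamma$ of $f^{-1}(S^1_R)$ is a loop along which $f$ winds a positive number of times around $0$, so $\Log f$ picks up a nonzero period. Your approach instead passes to the smooth projective completion $\overline B$, notes that exactness of $d\tilde f/\tilde f$ on $\widetilde B$ forces all residues at the points of $\overline B\setminus\widetilde B$ to vanish, and concludes that $\bar f$ is a nowhere-vanishing holomorphic function on a compact Riemann surface, hence constant. Both are short and standard; yours is a residue/compactness argument, the paper's is a direct monodromy computation. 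The paper's version has the minor advantage that it applies to a possibly singular curve $A$ without first normalizing, since the covering argument only needs the map to be \'etale over a Zariski open set.
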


\begin{proof}
 First, suppose that $A$ is a curve. Then we can delete from~$\C$ a finite set of points~$B=\{b_0=0,b_1,\ldots,b_q\}$ so that the restriction of~$f$ will yield a finite-sheeted (non-ramified) covering
$
f^{-1}(\C\setminus B)\to\C\setminus B
$. Choose a positive number $R$ different from $|b_1|,\ldots,|b_q|$, and put $$S^1_R=\{z\in\C\mid |z|=R\}.$$ Then the restriction of~$f$ to $f^{-1}(S^1_R)$ is a finite-sheeted covering $f^{-1}(S^1_R)\to S^1_R$. Any connected component~$\gamma$ of~$f^{-1}(S^1_R)$ is a closed path in~$A$. As a point $x$ passes along~$\gamma$, its image $f(x)$ passes several times along~$S^1_R$ in the same direction. Hence any branch of~$\Log f$ changes by a non-zero constant.

Second, suppose that $A\subset\C^N$ is an irreducible affine variety of dimension $d>1$. Since $f$ is non-constant on~$A$, there exists a regular point $x\in A$ such that $df$ is non-zero at~$x$. Take any tangent vector $\xi\in T_xA$ such that $\langle df,\xi\rangle\ne 0$. It is not hard to see that there exists an irreducible affine curve $\Gamma\subset A$ such that $x$ is a regular point of~$\Gamma$ and $\xi$ is a tangent vector to~$\Gamma$ at~$x$. Indeed, one can take for $\Gamma$ the irreducible component containing~$x$ of an arbitrary plane section $A\cap\Pi$, where $\Pi\subset\C^N$ is an $(N-d+1)$-dimensional plane such that $T_xA\cap T_x\Pi=\langle\xi\rangle$. Then the restriction of $f$ to~$\Gamma$ is non-constant. Hence $\Log f$ does not have a single-valued branch on~$\Gamma$. Therefore it does not have a single-valued branch on~$A$.
\end{proof}

\begin{proof}[Proof of Lemma~\ref{lem_log}]
Let $d$ be the dimension of~$A$. Then there exists a finite regular mapping $F\colon A\to\C^d$. Let $k$ be the degree of~$F$. Then there is a non-empty Zariski open subset $U\subset \C^d$ such that every point $z\in U$ has exactly $k$ pre-images under~$F$. Moreover, the pre-images of every point~$z\in\C^d\setminus U$ can be assigned multiplicities such that the sum of the multiplicities of all pre-images is equal to~$k$, and the resulting mapping $F^{-1}\colon\C^d\to\mathop{\mathrm{Sym}}^k(A)$ is continuous, where $\mathop{\mathrm{Sym}}^k(A)$ is the $k$th symmetric power of~$A$. We put 
$$
\psi(x)=\exp(-i\varphi(x)).
$$
For each $z\in\C^d$, we put
\begin{equation}\label{eq_Psi_j}
\Psi_j(z)=\sigma_j(\psi(x_1),\ldots,\psi(x_k)),
\end{equation}
where $\sigma_j$ is the $j$th elementary symmetric polynomial, and $x_1,\ldots,x_k$ are the pre-images of~$z$ under~$F$.  The functions~$\Psi_j(z)$ are holomorphic on~$U$ and continuous on~$\C^d$. Hence they have removable singularities on~$\C^d\setminus U$, i.\,e., they are entire functions on~$\C^d$, see~\cite[Sect.~32, Theorem~3]{Sha69}.

Now, let us estimate the functions~$\Psi_j(z)$.  Inequality~\eqref{eq_estimate} yields 
\begin{equation}\label{eq_estimate2}
|\Psi_j(z)|\le\sigma_j\left(\max_{n=1,\ldots,N} |f_n(x_1)|,\ldots,\max_{n=1,\ldots,N} |f_n(x_k)|\right).
\end{equation}
The ring $\C[A]$ is an integral extension of the ring~$\C[\C^d]=\C[z_1,\ldots,z_d]$. Hence every function $f_n$ satisfies a polynomial relation of the form
$$
f_n^{s_n}+g_{n,1}f_n^{s_n-1}+\cdots+g_{n,s_n}=0,
$$
where $g_{n,l}$ are polynomials in~$z=F(x)$. Therefore estimates~\eqref{eq_estimate2} yield  estimates 
$$
|\Psi_j(z)|\le C_j(1+|z|^{K_j})
$$ 
for some $C_j,K_j>0$.  By Liouville's theorem on entire functions, the functions~$\Psi_j(z)$ are polynomials, see~\cite[Sect.~A1.1]{Chi85}. 

By~\eqref{eq_Psi_j}, the function $\psi$ satisfies the polynomial relation
$$
\psi^k(x)-\Psi_1(F(x))\psi^{k-1}(x)+\Psi_2(F(x))\psi^{k-2}(x)-\cdots+(-1)^k\Psi_k(F(x))=0.
$$
The coefficients~$\Psi_j(F(x))$ are regular functions on~$A$. Hence, the function $\psi$ is integral over the ring~$\C[A]$. On the other hand, the function~$\psi$ is holomorphic on~$A$. It follows that $\psi$ is rational function on~$A$, see~\cite[Ch.~8, Sect.~3.1]{Sha72}. Since $A$ is smooth and $\psi$ takes finite values at all points of~$A$, we obtain that $\psi$ is a regular function on~$A$. The function~$-i\varphi$ is a single-valued holomorphic branch of the function $\Log\psi$ on~$A$. By Lemma~\ref{lem_multi_log}, this would be impossible if the function~$\psi$ were non-constant. Thus $\psi$ and~$\varphi$ are constants.
\end{proof}

\section{Proof of Theorem~\ref{theorem_main2} for arbitrary polyhedra}\label{section_proof_main_arbitrary}

Theorem~\ref{theorem_main2} for arbitrary flexible polyhedra follows immediately from Theorem~\ref{theorem_main2} for flexible polyhedra with non-degenerate faces and the following lemma.

\begin{lem}\label{lem_degenerate}
Let $P_t\colon K\to\Lambda^n$ be a bounded flexible polyhedron. Then there exist oriented $(n-1)$-dimensional pseudo-manifolds $K^{(1)},\ldots,K^{(k)}$ and bounded flexible polyhedra with non-degenerate faces $P_t^{(l)}\colon K^{(l)}\to\Lambda^n,$ $l=1,\ldots,k,$  such that the indicator function $\lambda_{P_t}(x)$ is equal to the sum $\sum_{l=1}^k\lambda_{P_t^{(l)}}(x)$ for all~$t$ and all $x\in \Lambda^n\setminus P_t(K)$. Hence, $$\CV_K(P_t)=\sum_{l=1}^k\CV_{K^{(l)}}(P_t^{(l)}).$$
\end{lem}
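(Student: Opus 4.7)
The plan is to prove the lemma by induction on the number of top-dimensional simplices of $K$ whose images under $P_t$ are degenerate. If no top-dim simplex is degenerate, then the affine-span inequality implies every subsimplex is non-degenerate as well (a $j$-subset of vertices spanning fewer than $j-1$ dimensions would force any $(k+1)$-supersimplex containing it to span fewer than $k$ dimensions, contradicting the top-dim hypothesis), and we are done with $k=1$, $K^{(1)}=K$, $P_t^{(1)}=P_t$.

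For the inductive step, choose a top-dim simplex $\sigma=[v_0,\ldots,v_{n-1}]$ whose image lies in an affine subspace $\Pi_t\subset\Lambda^n$ of dimension $d\leq n-2$. I introduce a new vertex $w$ and define $P_t'(w)$ to be a point off $\Pi_t$ at fixed (in $t$) distances from each $P_t(v_i)$: this is possible because the set of points equidistant from the $v_i$'s contains a positive-dimensional orbit of the rotational symmetry around $\Pi_t$ (whose orthogonal complement in $\Lambda^n$ has dimension at least $2$), permitting a continuous choice as $t$ varies. Star-subdividing $\sigma$ at $w$ replaces $\sigma$ by the oriented $(n-1)$-simplices $\tau_i=[v_0,\ldots,\widehat{v_i},\ldots,v_{n-1},w]$, yielding a new oriented pseudo-manifold $K'$ and flexion $P_t'\colon K'\to\Lambda^n$; all new edges $[w,v_i]$ have constant length, so $P_t'$ is a flexible polyhedron. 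The chain-level difference $\sum_i\tau_i-\sigma$ is a cycle bounding the $n$-simplex $[w,v_0,\ldots,v_{n-1}]$, whose $n+1$ vertices span an affine subspace of dimension at most $d+1\leq n-1$; this $n$-simplex therefore has zero $n$-volume in $\Lambda^n$, and consequently $\lambda_{P_t'}(x)=\lambda_{P_t}(x)$ almost everywhere on $\Lambda^n\setminus P_t(K)$.

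The main obstacle is ensuring the new top-dim simplices $\tau_i$ are themselves non-degenerate. When $\sigma$ is ``minimally degenerate'' ($d=n-2$), a generic placement of $w$ makes each $\tau_i$ non-degenerate, and the induction makes strict progress. When $d<n-2$ some $\tau_i$'s remain degenerate but their images span strictly more --- namely $d+1$ --- dimensions than that of $\sigma$; by iterating the construction (equivalently, by introducing $n-1-d$ auxiliary vertices $w_1,\ldots,w_{n-1-d}$ at once, each on the orthogonal axis of the current span with rigid attachment to all previously placed vertices) the spans strictly increase at every step, and after finitely many iterations no degenerate top-dim simplex remains. Taking $k=1$ and $P_t^{(1)}$ to be the final polyhedron completes the proof.
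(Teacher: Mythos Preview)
Your induction does not terminate. The key claim—that when $d = n-2$ a generic placement of $w$ makes every $\tau_i$ non-degenerate, and that when $d < n-2$ each $\tau_i$ spans strictly more than $\sigma$—is false. The span of $\tau_i = [F_i, w]$ equals $d_i + 1$, where $d_i \in \{d-1, d\}$ is the affine dimension of the facet $F_i = [v_0, \ldots, \hat v_i, \ldots, v_{n-1}]$; hence $\tau_i$ can span merely $d$, not $d+1$. Concretely, take $n = 4$ and $\sigma = [v_0, v_1, v_2, v_3]$ with $P_t(v_0), P_t(v_1), P_t(v_3)$ collinear and $P_t(v_2)$ off that line, so $d = 2 = n-2$. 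The facet $F_2 = [v_0, v_1, v_3]$ spans only one dimension, so $\tau_2 = [v_0, v_1, v_3, w]$ spans only two dimensions for \emph{every} placement of the single point $w$; it remains degenerate. Star-subdividing $\tau_2$ at a further vertex $w'$ produces, among others, the simplex $[v_0, v_1, v_3, w']$, which is degenerate for exactly the same reason. The collinear triple $\{v_0, v_1, v_3\}$ survives every star-subdivision, and the process never halts. Thus neither the number of degenerate top simplices nor their span is monotone under your construction, and the inductive step fails.

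The paper proceeds quite differently: rather than subdividing, it \emph{discards} all $\bell$-degenerate $(n-1)$-simplices outright and reglues the surviving non-degenerate ones along new face identifications dictated by ``maximal admissible thick paths''—chains of degenerate simplices joining two non-degenerate ones, recording how their boundary $(n-2)$-faces overlap inside $\Lambda^n$. This sidesteps the persistence of degenerate subconfigurations entirely. A minor secondary point: since your new vertex $w$ is placed off $P_t(\sigma)$, you have $P'_t(K') \not\subset P_t(K)$, so the indicator-function equality you obtain holds only almost everywhere rather than on all of $\Lambda^n \setminus P_t(K)$ as the lemma asserts; this suffices for the volume identity but not for the lemma verbatim.
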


\begin{proof}
Let $\bell$ be the set of edge lengths of the flexible polyhedron $P_t\colon K\to\Lambda^n$. For each simplex $\sigma$ of~$K$, the image~$P_t(\sigma)$ is  determined up to isometry of~$\Lambda^n$ by the set of edge lengths of~$\sigma$. We shall conveniently choose a polytope $ \Delta_{\sigma}\subset\Lambda^n$ isometric to~$P_t(\sigma)$ for all~$t$, the projection~$\varpi_{\sigma}\colon\sigma\to  \Delta_{\sigma}$, and the isometries  $\varphi_{\sigma,t}\colon  \Delta_{\sigma}\to P_t(\sigma)$ such that $\varphi_{\sigma,t}\circ\varpi_{\sigma}=P_t|_{\sigma}$ for all~$t$. 
We shall say that a simplex $\sigma$ of~$K$ is \textit{$\bell$-non-degenetrate\/} if $\dim  \Delta_{\sigma}=\dim\sigma$, and \textit{$\bell$-degenetrate\/} if $\dim  \Delta_{\sigma}<\dim\sigma$. If $\sigma$ is $\bell$-non-degenetrate, then $ \Delta_{\sigma}$ is a simplex.
 
A sequence $\theta=(\sigma_0,\ldots,\sigma_m)$  of $(n-1)$-dimensional simplices of~$K$ is called a \textit{thick path\/} if $\dim(\sigma_{j-1}\cap\sigma_j)=n-2$ for $j=1,\ldots,m$, and $\sigma_{j-1}\ne\sigma_{j+1}$ for $j=1,\ldots,m-1$. The number~$m$ will be called the \textit{length\/} of~$\theta$.
 A thick path~$\theta$ will be called \textit{admissible\/}  if $m\ge 1$ and the following conditions are satisfied:
\begin{enumerate}
\item $\sigma_0$  is $\bell$-non-degenerate and $\sigma_1,\ldots,\sigma_{m-1}$ are $\bell$-degenerate.
\item The simplices $\tau_j=\sigma_{j-1}\cap\sigma_j$ are $\bell$-non-degenerate, $j=1,\ldots,m$.
\item $\dim\left(P_t(\tau_1)\cap\cdots\cap P_t(\tau_m)\right)=n-2$.
\end{enumerate}

Let us prove that the latter  condition is independent of~$t$. Indeed, let $\iota_{t_2,t_1}$ be an isometry of~$\Lambda^n$ taking $P_{t_1}(\sigma_0)$ to~$P_{t_2}(\sigma_0)$, and let $\Pi_{t_1}$ and~$\Pi_{t_2}$ be the $(n-2)$-dimensional planes in~$\Lambda^n$ containing~$P_{t_1}(\tau_1)$ and~$P_{t_2}(\tau_1)$, respectively. It is easy to see that conditions~(1) and~(2) imply that $P_{t_1}$  and~$P_{t_2}$ map all simplices $\sigma_1,\ldots,\sigma_{m-1}$ to the planes $\Pi_{t_1}$ and~$\Pi_{t_2}$, respectively, and $$P_{t_2}|_{\sigma_1\cup\cdots\cup\sigma_{m-1}}=\iota_{t_2,t_1}\circ P_{t_1}|_{\sigma_1\cup\cdots\cup\sigma_{m-1}}.$$ Therefore, the polytopes $P_{t_1}(\tau_1)\cap\cdots\cap P_{t_1}(\tau_m)$ and $P_{t_2}(\tau_1)\cap\cdots\cap P_{t_2}(\tau_m)$ are isometric. Moreover, we see that the $(n-2)$-dimensional convex polytope 
\begin{equation*}
F_{\theta}=\varphi^{-1}_{\sigma_0,t}\left(P_t(\tau_1)\cap\cdots\cap P_t(\tau_m)\right)\subset\partial  \Delta_{\sigma_0}
\end{equation*}
is independent of~$t$.

An admissible thick path $\theta=(\sigma_0,\ldots,\sigma_m)$ will be called \textit{maximal\/} if $\sigma_m$ is $\bell$-non-dege\-n\-erate. Obviously, if $\theta$ is a maximal admissible thick path, then the inverse sequence $\theta^{-1}=(\sigma_m,\ldots,\sigma_0)$ is also a maximal admissible thick path, and the isometry 
$$
\gamma_{\theta}\colon F_{\theta}\xrightarrow{\varphi_{\sigma_0,t}}P_t(\tau_1)\cap\cdots\cap P_t(\tau_m)\xrightarrow{\varphi_{\sigma_m,t}^{-1}}F_{\theta^{-1}}
$$
is independent of~$t$.

\begin{lem}
The number of admissible thick paths is finite.
Let $\sigma$ be an $\bell$-non-degene\-rate $(n-1)$-dimensional simplex of~$K$.  Then the  polytopes $F_{\theta},$ where $\theta$ runs over all maximal admissible thick paths starting with~$\sigma,$ form a decomposition of~$\partial  \Delta_{\sigma}$ into finitely many convex polytopes with disjoint interiors.
\end{lem}
\begin{proof}
To prove that the number of admissible thick paths is finite it is sufficient to show that any admissible thick path $\theta=(\sigma_0,\ldots,\sigma_m)$ is non-self-intersecting, i.\,e., the simplices $\sigma_0,\ldots,\sigma_m$ are pairwise disjoint. Assume the converse. Take the smallest~$j$ such that $\sigma_j=\sigma_i$ for some $i<j$. 
If $i\ne 0$, then the simplices $\tau_{i}$, $\tau_{i+1}$, and $\tau_{j}$ are three pairwise different $(n-2)$-dimensional faces of~$\sigma_{i}$. It follows easily that $\dim(P_t(\tau_{i})\cap P_t(\tau_{i+1})\cap P_t(\tau_{j}))<n-2$, which contradicts property~(3) in the definition of an admissible thick  path. If $i=0$, then $j=m$. Because of the minimality of~$j$, we see that $\sigma_1\ne\sigma_{m-1}$. Hence~$\tau_1$ and~$\tau_{m}$  are different $(n-2)$-dimensional faces of the $\bell$-non-degenerate simplex~$\sigma_0$. Therefore $\dim(P_t(\tau_1)\cap P_t(\tau_m))<n-2$ and we again obtain a contradiction with property~(3).

Let us prove that the interiors of polytopes~$F_{\theta}$ and~$F_{\theta'}$ corresponding to different maximal admissible thick paths $\theta=(\sigma,\sigma_1,\ldots,\sigma_m)$ and  $\theta'=(\sigma,\sigma'_1,\ldots,\sigma'_{m'})$ are disjoint. If $\sigma_1\ne\sigma_1'$, then~$F_{\theta}$ and~$F_{\theta'}$ lie in different $(n-2)$-dimensional faces of~$ \Delta_{\sigma}$, hence their interiors are disjoint. Assume that $\sigma_1=\sigma_1'$. Take the smallest~$j$ such that $\sigma_j\ne \sigma_j'$. Then $\tau_{j-1}$, $\tau_j$, and $\tau_j'$ are three pairwise different $(n-2)$-dimensional faces of~$\sigma_{j-1}$. Hence $\dim(P_t(\tau_{j-1})\cap P_t(\tau_{j})\cap P_t(\tau_{j}'))<n-2$. Therefore $\dim (F_{\theta}\cap F_{\theta'})<n-2$. Thus the interiors of~$F_{\theta}$ and~$F_{\theta'}$ are disjoint.

Finally, let us prove that the union of the polytopes~$F_{\theta}$ for all maximal admissible thick paths~$\theta$ starting with~$\sigma$ coincides with~$\partial  \Delta_{\sigma}$. We denote by  $A_m(\sigma)$ the set of all admissible thick paths of length~$m$ starting with~$\sigma$, and we denote by $MA(\sigma)$ the set of all maximal admissible thick paths starting with~$\sigma$.  For $m>1$, let us prove  that 
\begin{equation}\label{eq_AF_theta}
\bigcup_{\theta\in A_{m-1}}F_{\theta}\subset \bigcup_{\theta\in A_m(\sigma)\cup MA(\sigma)}F_{\theta}.
\end{equation}
Let $\theta=(\sigma,\sigma_1,\ldots,\sigma_{m-1})$ be an admissible thick path that is not maximal.  Let $\tau_m^{(1)},\ldots,\tau_m^{(p)}$ be all $\bell$-non-degenerate $(n-2)$-dimensional faces of~$\sigma_{m-1}$ different from~$\tau_{m-1}$. For $j=1,\ldots,p$, let $\sigma_m^{(j)}$ be the $(n-1)$-dimensional simplex of~$K$ such that $\sigma_{m-1}\cap\sigma_m^{(j)}=\tau_m^{(j)}$, and let
$\theta^{(j)}=(\sigma,\sigma_1,\ldots,\sigma_{m-1},\sigma_m^{(j)})$. Since $\sigma_{m-1}$ is $\bell$-degenerate, we have $P_t(\tau_{m-1})\subset \bigcup_{j=1}^pP_t\bigl(\tau_m^{(j)}\bigr)$. Hence,
\begin{equation*}
P_t(\tau_1)\cap\cdots\cap P_t(\tau_{m-1})\subset \bigcup_{j=1}^p\left(P_t(\tau_1)\cap\cdots\cap P_t(\tau_{m-1})\cap P_t\bigl(\tau_m^{(j)}\bigr)\right).
\end{equation*}
Moreover, since $P_t(\tau_1)\cap\cdots\cap P_t(\tau_{m-1})$ is an $(n-2)$-dimensional convex polytope, it is contained in the union of only those polytopes $P_t(\tau_1)\cap\cdots\cap P_t(\tau_{m-1})\cap P_t\bigl(\tau_m^{(j)}\bigr)$ which are $(n-2)$-dimensional, that is, correspond to admissible thick paths~$\theta^{(j)}$. Therefore the polytope $F_{\theta}$ is contained in the union of the polytopes~$F_{\theta^{(j)}}$ corresponding to admissible thick paths~$\theta^{(j)}$, which immediately yields~\eqref{eq_AF_theta}. Since the sets $A_m(\sigma)$ are empty for sufficiently large~$m$, we obtain that $\bigcup_{\theta\in MA(\sigma)}F_{\theta}\supset\bigcup_{\theta\in A_1(\sigma)}F_{\theta}=\partial  \Delta_{\sigma}$. The latter equality holds, since all thick paths $(\sigma,\sigma_1)$ of length~$1$ are admissible.
\end{proof}

Let us proceed with the proof of Lemma~\ref{lem_degenerate}. Take the disjoint union of the hyperbolic simplices~$ \Delta_{\sigma}$, where $\sigma$ runs over all $\bell$-non-degenerate $(n-1)$-dimensional simplices of~$K$. For each maximal admissible thick path $\theta=(\sigma_0,\ldots,\sigma_m)$, glue the $(n-2)$-dimensional polytopes $F_{\theta}\subset \partial  \Delta_{\sigma_0}$ and $F_{\theta^{-1}}\subset \partial  \Delta_{\sigma_m}$ along the isometry~$\gamma_{\theta}$. Let $\mathcal{K}$ be the obtained cell complex. For each~$t$, the mappings $\varphi_{\sigma,t}\colon  \Delta_{\sigma}\to\Lambda^n$ constitute a well-defined mapping $\mathcal{P}_t\colon\mathcal{K}\to \Lambda^n$. In particular, it follows that the tautological mappings $ \Delta_{\sigma}\to\mathcal{K}$ are embeddings, i.\,e., that we never glue any two different points of the same cell~$ \Delta_{\sigma}$. By the construction, every cell of~$\mathcal{K}$ is contained in an $(n-1)$-dimensional cell and every $(n-2)$-dimensional cell of~$\mathcal{K}$ is contained in exactly two $(n-1)$-dimensional cells. We subdivide the cell complex~$\mathcal{K}$ into convex hyperbolic simplices. Let $K^{(1)},\ldots,K^{(k)}$ be the connected components of the obtained simplicial complex. Then  $K^{(1)},\ldots,K^{(k)}$ are $(n-1)$-dimensional pseudo-manifolds. The orientation of every $\bell$-non-degenerate $(n-1)$-dimensional simplex~$\sigma$ of~$K$ induces the orientation of~$ \Delta_{\sigma}$. Hence the pseudo-manifolds $K^{(1)},\ldots,K^{(k)}$ obtain canonical orientations. Let $P^{(j)}_t$ be the restriction of~$\mathcal{P}_t$ to~$K^{(j)}$. Since the restriction of~$\mathcal{P}_t$ to every cell~$ \Delta_{\sigma}$ is an isometry, we see that $P^{(j)}_t\colon K^{(j)}\to\Lambda^n$ is a flexible polyhedron with non-degenerate faces.  It follows immediately from the construction that $P^{(j)}_t(K^{(j)})\subset P_t(K)$ for all~$j$ and~$t$, and $\lambda_{P_t}(x)=\sum_{l=1}^k\lambda_{P_t^{(l)}}(x)$ for all~$t$ and all $x\in \Lambda^n\setminus P_t(K)$.
\end{proof}

\end{document}